\newtheorem{theorem}{Theorem}[section]
\newtheorem{lemma}[theorem]{Lemma}
\newtheorem{prop}[theorem]{Proposition}
\newtheorem{definition}[theorem]{Definition}
\newtheorem{rmk}[theorem]{Remark}
\newcommand{\Hmm}[1]{\leavevmode{\marginpar{\tiny%
$\hbox to 0mm{\hspace*{-0.5mm}$\leftarrow$\hss}%
\vcenter{\vrule depth 0.1mm height 0.1mm width \the\marginparwidth}%
\hbox to 0mm{\hss$\rightarrow$\hspace*{-0.5mm}}$\\\relax\raggedright #1}}}
\newcommand{\nc}{\newcommand}
\nc{\les}{\lesssim}
\nc{\ges}{\gtrsim}
\nc{\nit}{\noindent}
\nc{\nn}{\nonumber}
\nc{\D}{\partial}
\nc{\diff}[2]{\frac{d #1}{d #2}}
\nc{\diffn}[3]{\frac{d^{#3} #1}{d {#2}^{#3}}}
\nc{\pdiff}[2]{\frac{\partial #1}{\partial #2}}
\nc{\pdiffn}[3]{\frac{\partial^{#3} #1}{\partial{#2}^{#3}}}
\nc{\abs}[1] {\lvert #1 \rvert}
\nc{\cAc}{{\cal A}_c}
\nc{\cE}{{\cal E}}
\nc{\mF}{{\mathcal F}}
\nc{\cF}{{\mathcal F}} 
\nc{\cP}{{\cal P}}
\nc{\cV}{{\cal V}}
\nc{\cQ}{{\cal Q}}
\nc{\cGin}{{\cal G}_{\rm in}}
\nc{\cGout}{{\cal G}_{\rm out}}
\nc{\cO}{{\cal O}}
\nc{\Lav}{{\cal L}_{\rm av}}
\nc{\cL}{{\mathcal L}}
\nc{\cH}{{\mathcal H}}
\nc{\cB}{{\cal B}}
\nc{\cZ}{{\cal Z}}
\nc{\cR}{{\cal R}}
\nc{\cT}{{\cal T}}
\nc{\cY}{{\cal Y}}
\nc{\cX}{{\cal X}}
\nc{\cXT}{{{\cal X}(T)}}
\nc{\cBT}{{{\cal B}(T)}}
\nc{\vD}{{\vec \mathcal{D}}}
\nc{\efield}{\mathcal{E}}
\nc{\vE}{{\vec \efield}}
\nc{\vB}{{\vec \mathcal{B}}}
\nc{\vH}{{\vec \mathcal{H}}}
\nc{\ty}{{\tilde y}}
\nc{\tu}{{\tilde u}}
\nc{\tV}{{\tilde V}}
\nc{\Pc}{{\bf P_c}}
\nc{\bx}{{\bf x}}
\nc{\bX}{{\bf X}}
\nc{\bXYZ}{{\bf XYZ}}
\nc{\bY}{{\bf Y}}
\nc{\bF}{{\bf F}}
\nc{\bS}{{\bf S}}
\nc{\dV}{{\delta V}}
\nc{\dE}{{\delta E}}
\nc{\TT}{{\Theta}}
\nc{\dPsi}{{\delta\Psi}}
\nc{\order}{{\cal O}}
\nc{\Rout}{R_{\rm out}}
\nc{\eplus}{e_+}
\nc{\eminus}{e_-}
\nc{\epm}{e_\pm}
\nc{\eps}{\varepsilon}
\nc{\vnabla}{{\vec\nabla}}
\nc{\G}{\Gamma}
\nc{\w}{\omega}
\nc{\mh}{h}
\nc{\mg}{g}
\nc{\vphi}{\varphi}
\nc{\tlambda}{\tilde\lambda}
\nc{\be}{\begin{equation}}
\nc{\ee}{\end{equation}}
\nc{\ba}{\begin{eqnarray}}
\nc{\ea}{\end{eqnarray}}
\nc{\g}{\gamma}
\nc{\ol}{\overline}
\def\R{\mathbb R}
\nc{\T}{\mathbb T}
\nc{\Z}{\mathbb Z}
\nc{\N}{\mathbb N}
\nc{\pt}{\partial_t}
\nc{\la}{\langle}
\nc{\ra}{\rangle}
\nc{\infint}{\int_{-\infty}^{\infty}}
\nc{\halfwidth}{6.5cm}
\nc{\uu}{\" u}
\nc{\oo}{\" o}
\nc{\nlayers}{L} \nc{\nsectors}{M}
\nc{\indicator}{\mathbf{1}}
\nc{\Rhole}{R_{\rm hole}}
\nc{\Rring}{R_{\rm ring}}
\nc{\neff}{n_{\rm eff}}
\nc{\Frem}{F_{\rm rem}}
\nc{\DD}{\Delta}
\nc{\cD}{\mathcal D}
\nc{\lnorm}{\left\|}
\nc{\rnorm}{\right\|}
\nc{\rnormp}{\right\|_{\ell^{p,\eps}}}
\nc{\rar}{\rightarrow}
\nc{\sgn}{{\rm sign}}
\nc{\non}{\nonumber}
\nc{\wh}{\widehat}
\date{\today}
\begin{document}

\title[The Fifth Order KP--II Equation]{The Fifth Order KP--II Equation on the Upper Half--plane}

\author[ Erdo\u{g}an, G{\uu}rel, Tzirakis]{M. B. Erdo\u{g}an, T. B. G{\uu}rel, and N. Tzirakis}
\thanks{The first author is partially supported by NSF grant  DMS-1501041. The second author is partially supported by T\"UB\.ITAK grant 118F152 and Bo\u{g}azi\c{c}i University Research Fund grant BAP-14081. The third  author's work was supported by a grant from the Simons Foundation (\#355523 Nikolaos Tzirakis) and by Illinois Campus Research Board RB 18051}
 
\address{Department of Mathematics \\
University of Illinois \\
Urbana, IL 61801, U.S.A.}
\email{berdogan@illinois.edu}

\address{Department of Mathematics \\
Bo\u gazi\c ci University\\ 
Bebek 34342, Istanbul, Turkey}
\email{bgurel@boun.edu.tr}

\address{Department of Mathematics \\
University of Illinois \\
Urbana, IL 61801, U.S.A.}
\email{tzirakis@illinois.edu}

\begin{abstract}

In this paper we study the fifth order Kadomtsev--Petviashvili II (KP--II) equation on the upper half--plane $U=\{(x,y)\in \R^2: y>0\}$. In particular we obtain low regularity local well--posedness using the restricted norm method of Bourgain and   the Fourier--Laplace method of solving initial and boundary value problems.  Moreover we prove that the nonlinear part of the solution is in a smoother space than the initial data.

\end{abstract}

\maketitle

\section{Introduction}

In this paper we study the following initial-boundary value problem for the fifth order KP--II equation
\begin{equation}\label{kp2}
\left\{
\begin{array}{l}
\partial_x\big(u_{t}-\D^5_x u+uu_x\big) +u_{yy}=0 \quad x \in {\R}, \, y>0,\, t>0,\\
u(x,y,0)=g(x,y)\in H^{s}(U),\quad u(x,0,t)=h(x,t), 
\end{array}
\right.
\end{equation}
where $U=\{(x,y)\in\R^2: y> 0\}$ is the upper half--plane. For the boundary data $h$ the suitable space turns out to be an  $L^2$ based Sobolev space, $\cH^s_{x,t}$,  see \eqref{eq:cHs}. 
In addition, for  $s>\frac12$ we have  the   compatibility condition for the $L^2$ traces: $g|_{t=0}=h|_{t=0}$. The compatibility condition is necessary since the solutions we are interested in have  continuous $L^2_x$ traces for $s>\frac12$.  

Recently, there has been a lot of work dedicated to the fifth order KP--II equation when the domain is  $\R^2$, the two dimensional torus $\T^2$ or cylinders of the form $\R \times \T$. We refer the reader to the papers \cite{mst}, \cite{miao}, and the references therein. The  two dimensional model occurs naturally in the modeling of certain long dispersive waves. In \cite{kawa}, Kawahara derived the equation 
$$u_{t}+\partial_{x}^{5}u +\alpha \partial_{x}^{3}u+uu_x=0$$
to model solitary waves with an oscillatory structure  propagating in one direction, which cannot be
obtained from the classical KdV equation
$$u_{t}+\partial_{x}^{3}u+uu_x=0.$$
Taking into account weak transverse effects in the $y$ direction leads  to the  fifth order Kadomtsev--Petviashvili equation, \cite{kp}
$$\partial_x\big(u_{t}-\D^5_x u+\alpha \partial_{x}^{3}u+uu_x\big) +\sigma  u_{yy}=0,$$
where $\sigma =-1$ corresponds to the KP--I type equation while $\sigma= 1$ corresponds to the KP--II type equation. 
Thus the fifth order KP equation can be thought as the mixing of the KP  equation with a Kawahara term. Solitary waves for these equations were studied in \cite{kb,kb1}.

For the classical KP--I equation
$$\partial_x\big(u_{t}+ \partial_{x}^{3}u+uu_x\big) -u_{yy}=0$$ 
local and global and global well--posedness results are harder to obtain. This can be seen by using the dispersive symbol of the equation. In the case of KP--I, there is half derivative smoothing in the $x$ direction,  while for KP--II one can gain a full derivative. Since in our paper we concentrate on KP--II we refer the reader to \cite{mst} for recent results on KP--I. The low regularity well--posedness theory of the KP--II equation on the plane,
$$\partial_x\big(u_{t}+ \partial_{x}^{3}u+uu_x\big) +u_{yy}=0,$$  
started with the seminal paper of Bourgain in \cite{Bou1}. Bourgain obtained local well--posedness (LWP) and global well--posedness (GWP) for solutions with initial data $g\in L^{2}(\R^2)$. GWP follows easily from the locally well--posed solutions since the flow conserves the $L^2$ norm.
There are more recent results on anisotropic spaces of the form $H^{s_1}\times H^{s_2}$.  Note that the solution remains invariant under the  appropriate scaling of  the initial data if $s_1+2s_2=-\frac12$. In addition, the restriction $s_2 \geq 0$ is natural due to the Galilean invariance of the equation \cite{bm}. Takaoka and Tzvetkov, \cite{tt},  proved LWP for any $s_1>-\frac13$ and $s_2\geq 0$. Takaoka,  \cite{ht}, further improved this result going down to $s_1>-\frac12$ and $s_2\geq 0$. However, Takaoka's result  requires an additional  low frequency assumption. This assumption was later removed by Hadac in \cite{hadac}. Finally, the critical regularity ($s_1=-\frac12,\ s_2=0$)  was reached in \cite{hhk} with the additional assumption of small initial data.

For the fifth order KP--II the scaling relation is $s_1+3s_2=-2$. Saut and Tzvetkov in \cite{saut} proved LWP in anisotropic Sobolev spaces when $s_1>-\frac14$ and $s_2 \geq 0$. They also proved GWP for initial data in $L^2(\R^2)$. Their result was improved by Isaza et al. in \cite{isaza} where   LWP was established for $s_1>-\frac54$ and $s_2 \geq 0$. The authors also employed the almost conservation machinery of the ``$I$--method" to obtain GWP for   $s_1>-\frac47$ and $s_2 \geq 0$. Later in \cite{hadac}, Hadac obtained the same LWP result in a more general context.  The most recent improvement is  for data at the $-\frac54$ regularity, see \cite{li}.

The only work we are aware of on the initial-boundary value problems involving KP type equations is \cite{MP}. They considered the classical KP--II equation on a strip and obtained local weak solutions in certain weighted Sobolev spaces. In this paper, we study the fifth order KP--II equation on the half plane with initial and boundary data in  $L^2$ based Sobolev spaces and obtain low regularity strong solutions. We also impose a nonhomogenous boundary constraint at $y=0$.  

 Wellposedness of \eqref{kp2}  means local existence, uniqueness and continuity with respect to the initial data of distributional solutions. For the definition of the   usual Sobolev spaces  and their adapted generalization for the fifth order KP--II we refer the reader to the Notation subsection below. More precisely we have the following definition: 

\begin{definition}\label{def:lwp} Fix $s\in(0,\tfrac52)$. 
We say \eqref{kp2} is locally well--posed in $H^s(U)$, if \\
i) for any $g\in H^s(U)$ and $h\in \cH^s_{x,t}(U)$, with the compatibility condition $g(x,0)=h(x,0)$ a.e. for $s>\frac12$, the equation has a distributional solution 
$$
u\in   C^0_tH^s_{x,y}([0,T]\times U) \cap C^0_y\cH^s_{x,t}(\R^+ \times \R\times [0,T]),
$$
where $T=T(\|g\|_{H^s(U)},\|h\|_{\cH^s_{x,t} (U) })$, \\
ii) if $g_n\to g$ in $H^s(U)$ and $h_n\to h$ in $\cH^s_{x,t} (U)$, then $u_n\to u$ in the space above.
\end{definition}

Our first theorem establishes   local well--posedness.

\begin{theorem} \label{thm:local} Fix  $s\in (0, \frac52)\setminus\{\frac12\}$.  Then the equation \eqref{kp2} is locally well--posed in $H^s(U)$ in the sense of Definition~\ref{def:lwp}.
\end{theorem}

In addition we obtain the following smoothing estimate:
\begin{theorem} \label{thm:smooth} Fix $s\in (0, \frac52)\setminus \{\frac12\}$  and $a<\min(  \frac13,\frac{2s}3,\frac32-\frac{3s}5)$.  Then  for any $g\in H^s(U)$ and $h\in \cH^s_{x,t}(U)$,  with the additional compatibility condition $g(x,0)=h(x,0)$ a.e.  when $s>\frac12$,  the solution $u$ of \eqref{kp2}  satisfies
$$
u(x,t)-W_0^t(g,h)(x)\in C^0_tH^{s+a}_x([0,T]\times U),
$$
where $T$ is the local existence time, and  $ W_0^t(g,h)$ is the solution of the corresponding linear equation.
\end{theorem}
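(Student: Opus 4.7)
The plan is to follow the standard smoothing-type framework used for dispersive IBVPs. The local theory in Theorem~\ref{thm:local} produces the solution on $U\times[0,T]$ as the restriction of a full-plane object via a Fourier--Laplace representation, so I would write
\[
u(x,y,t)=W_0^t(g,h)(x,y)+R(g,h)(x,y,t)+N(u)(x,y,t),
\]
where $R(g,h)$ collects the boundary-correction terms produced by the Laplace transform in $y$, and
\[
N(u)(x,y,t)=\tfrac12\int_0^t e^{(t-\tau)L}\partial_x(u^2)(\tau)\,d\tau
\]
is the Duhamel nonlinear piece (or an appropriate time-truncated extension thereof). The boundary corrections admit explicit oscillatory-integral representations whose extra regularity can be read off directly: each additional $x$-derivative placed on $R(g,h)$ is absorbed by the Laplace-transform weight, giving $R(g,h)\in C^0_tH^{s+a}_x$ without further difficulty. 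The bulk of the work is thus the bound $N(u)\in C^0_tH^{s+a}_x([0,T]\times U)$.

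This reduces to a bilinear smoothing estimate in the Bourgain space $X^{s,b}$ adapted to the fifth-order KP--II phase $\phi(\xi,\eta)=\xi^5+\eta^2/\xi$: for $b$ slightly larger than $\tfrac12$, $b'$ slightly larger than $-\tfrac12$, and any $a<\min(\tfrac13,\tfrac{2s}{3},\tfrac32-\tfrac{3s}{5})$, the target estimate is
\[
\bigl\|\partial_x(uv)\bigr\|_{X^{s+a,b'-1}}\les\|u\|_{X^{s,b}}\|v\|_{X^{s,b}}.
\]
Combined with the embedding $X^{s+a,b}\hookrightarrow C^0_tH^{s+a}_{x,y}$ and the a priori bound $\|u\|_{X^{s,b}}\les\|g\|_{H^s(U)}+\|h\|_{\cH^s_{x,t}(U)}$ already used to prove Theorem~\ref{thm:local}, this yields the stated smoothing for $N(u)$ after restricting to $U$.

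The bilinear estimate is proved by dyadic decomposition in both spatial and modulation variables, together with analysis of the resonance function
\[
\Omega=\phi(\xi_1+\xi_2,\eta_1+\eta_2)-\phi(\xi_1,\eta_1)-\phi(\xi_2,\eta_2).
\]
The quintic $x$-dispersion produces a resonance of size $\sim\xi_{\max}^4|\xi_1\xi_2(\xi_1+\xi_2)|$ up to transverse contributions, and when distributed between the modulation weights $\la\tau-\phi(\xi,\eta)\ra$ and the derivative loss from $\partial_x$, it leaves the $+a$ smoothing. The three thresholds on $a$ each correspond to a different frequency-interaction regime (non-resonant high-high to high, high-high to low where input regularity must subsidize the output, and a Strichartz-controlled coherent regime), and the stated bound is the minimum obtained by optimizing across these cases. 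The main obstacle I anticipate is the small-$\xi$ region produced by the singular term $\eta^2/\xi$ in the phase: there the resonance bound degenerates, and one needs, following~\cite{hadac}, a refined $X^{s,b}$ norm incorporating weights adapted to the Galilean invariance of KP--II. The portion of $R(g,h)$ concentrated in this same low-$\xi$ regime must be handled in parallel so that the smoothing gain closes uniformly.
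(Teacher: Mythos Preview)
Your decomposition misidentifies where the boundary corrections live. By the Duhamel representation \eqref{eq:duhamel}, the difference $u-W_0^t(g,h)$ is not a Duhamel term plus a linear boundary piece $R(g,h)$ depending only on the data; it is
\[
\mu(t)\int_0^t W_{\R^2}(t-t')F(u)\,dt' \;-\; \mu(t)\,W_0^t\bigl(0,q\bigr),
\]
where $q$ is the $y=0$ trace of the Duhamel integral itself. The boundary correction you must place in $H^{s+a}$ is therefore generated by the nonlinearity, and you cannot dispose of it by reading off extra regularity from an explicit linear formula.

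This matters because controlling $\mu(t)W_0^t(0,q)$ in $H^{s+a}_{x,y}$ (via Proposition~\ref{prop:cHstoHs} and Proposition~\ref{w2:xsbb1}) requires $\chi_{t>0}q\in\cH^{s+a}_{x,t}$, and by Proposition~\ref{prop:nonlinKato} this demands, once $s+a>\tfrac12$, a second bilinear estimate for $(u^2)_x$ in the auxiliary space $X^{\frac12+,\,\frac{s+a}2-\frac14,\,-b_1}$ with $b_1=\tfrac34-\tfrac{s+a}2$ (Theorem~\ref{thm:nonlin2}). The threshold $a<\tfrac32-\tfrac{3s}5$ arises precisely from this estimate, not from a Strichartz-controlled regime inside the standard $X^{s,b}$ bilinear bound. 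Your plan covers only Theorem~\ref{thm:nonlin1}, whose restriction is $a<\min(\tfrac13,\tfrac{2s}3)$, and so would not close for $s$ near the upper end of the range.

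Two smaller points: the paper works with $b<\tfrac12$ throughout (this is forced by Proposition~\ref{w2:xsbb1} and by the form of \eqref{eq:xs2}), not $b>\tfrac12$; and the small-$|\xi|$ region is handled directly in the case analysis of Theorems~\ref{thm:nonlin1}--\ref{thm:nonlin2} without introducing Hadac-style Galilean weights.
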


\begin{rmk}
We should note that the proof of  Theorem~\ref{thm:smooth} yields the analogous smoothing result for the problem on the full plane $\mathbb R^2$ which appears to be new.   
\end{rmk}

To study the half--plane problem we utilize the restricted norm method of Bourgain  \cite{bourgain,Bou2}. This continues our work initiated in  \cite{etnls}, \cite{etza} and \cite{egt}, of  establishing the regularity properties of nonlinear dispersive partial differential equations (PDE) on a half line  using the tools that are available in the case of the whole  line. We thus extend the data to the whole plane and use Laplace transform methods to   set up an equivalent integral equation (on $\R^2\times \R $) of the solution, see \eqref{eq:duhamel} below.  We then analyze the integral equation  using  the restricted norm method as in \cite{collianderkenig,etnls,etza} and multilinear $L^2$ convolution estimates. Our result is the first well--posedness result on the half--plane for a KP type dispersive equation.  Concerning uniqueness, the solution we obtain for the integral equation \eqref{eq:duhamel}  is unique. However, we cannot obtain a unique strong solution of the original PDE since our solution is a fixed point of \eqref{eq:duhamel}  that depends on the particular extension we use. We should also note that our method does  not immediately apply to  the initial boundary value problem for   the classical KP--II equation with the third order dispersion. We hope to address these two problems (uniqueness of solutions and well--posedness theory for KP--II)   in our future work. Another interesting problem is that of GWP for the fifth order KP--II. Unfortunately this is not as easy as in the full plane case since the presence of the boundary terms prevent certain energy identities to hold. Subsequently it is hard to obtain a priori bounds for our solutions in the Sobolev type norms we use in our local result.


We now discuss briefly the organization of the paper.  In Section 2,  we  introduce the appropriate function spaces, especially the  $X^{s,b}$ norm. 
We also construct the solutions of the linear problem and set up the Duhamel formula  for the full equation. The Duhamel formula incorporates the extension of the data on $\mathbb R^2$ and the evaluation of certain operators at the zero boundary. 
In Section 3, we obtain the a priori linear estimates that we need in order to put our solutions to the right function spaces. In Section 4, we prove the nonlinear estimates which is the main part of this paper. This section also provides the tools needed for the proof of Theorem~\ref{thm:smooth}.  In Section 5, we briefly outline the well--known process of establishing LWP and smoothing using the linear and nonlinear estimates of Sections 3 and 4. The last section, Section 6, is an Appendix where we state two calculus lemmas that we use throughout the paper. We finish the introduction with a notation subsection.

\subsection{Notation}

Recall that for $s\geq 0$, $H^s(\R^d)$ is defined as a subspace of $L^2$ via the norm
$$
\|f\|_{H^s}=\|f\|_{H^s(\R^d)}:=\Big(\int_{\R^d} \la \zeta\ra^{2s} |\widehat{f}(\zeta)|^2 d\zeta \Big)^{1/2},
$$
where $\la \zeta\ra:=(1+|\zeta|^2)^{1/2}$ and 
$$\widehat{f}(\zeta)=\cF f(\zeta)=\int_{\R^d} f(x)e^{-ix\cdot\zeta} dx$$ 
is the Fourier transform of $f$. We also set the notation
$$
f(\widehat{\zeta_j})=\cF_j f(\zeta_j)=\int_{\R} f(x)e^{-ix_j\zeta_j}dx_j
$$
for the Fourier transform in the $j$th space coordinate. The Laplace transform is defined as usual by
$$
\widetilde{f}(\lambda)=\cL f(\lambda)=\int_0^{\infty} f(t)e^{-\lambda t}dt,\quad \Re \lambda >0
$$
and in case of several variables we will write $f(\widetilde{\lambda_j})$ to represent the Laplace transform in a particular variable.

For a space time function $f$, we set the notation
$$
D_0f(x,t)=f(x,0,t).
$$

Throughout the paper we have $s\in(0,\frac52) $, $s\neq\frac12$.
We define $H^s(U)$ norm as
$$
\|g\|_{H^s(U)}:=\inf\big\{\|\tilde g\|_{H^s(\R^2)}: \tilde g(x,y)=g(x,y),\, y>0\big\}.
$$
We say $\tilde g$ is an $H^s(\R^2)$ extension of $g\in H^s(U)$ if $ \tilde g(x,y)=g(x,y)$ for $y>0$ and $\|\tilde g\|_{H^s(\R^2)}\leq 2 \|g\|_{H^s(U)}$.  
Note that, if $g  \in  H^s(U)$ for some $s>\frac12$, then by trace lemma  any $H^s$ extension  is in $C^0_yL^2_x$, and hence $g(x,0)$ is well defined as an $L^2$ function. 

The following Sobolev type space will be the natural choice for the boundary data we impose 
\begin{multline} \label{eq:cHs}
\cH^s=\cH^s_{x,t}(\R^2)=\Big\{\varphi:\R^2\to \R: \la \xi^2+\eta^2\ra^{\frac{s}{2}}\,\frac{\eta}{\xi}\widehat{\varphi}(\xi,\xi^5\pm \tfrac{\eta^2}{\xi})\in L^2_{\xi,\eta}\Big\}  \\
=\Big\{\varphi:\R^2\to \R: \la \xi^2+|\xi\beta-\xi^6|\ra^{\frac{s}{2}}\,\frac{|\xi\beta-\xi^6|^{\frac14}}{|\xi|^{\frac12}}\widehat{\varphi}(\xi,\beta)\in L^2_{\xi,\beta}\Big\},                                            
\end{multline}
for $s\in\R$. We define $\cH^s(U)$, where $U$ is the upper half plane, analogously.

Finally, we use $\la x,y\ra $ to denote $\la (x,y)\ra=\sqrt{1+x^2+y^2}$, and we reserve  the symbol $\mu $ for a smooth compactly supported function of time which is equal to $1$ on $[-1,1]$.

\section{Notion of a solution }
In order to construct the solutions of \eqref{kp2}, we first consider the linear problem with $g \in H^{s}(U)$ and $\chi_{t>0}h(x,t)\in \cH^s_{x,t}$:
\begin{equation}\label{kp2-linear}
\left\{
\begin{array}{l}
\partial_x\big(u_{t}-\D_x^5u \big)+  u_{yy}=0 \quad x \in {\R}, \, y>0,\, t>0,\\
u(x,y,0)=g(x,y),\quad u(x,0,t)=h(x,t). \\
\end{array}
\right.
\end{equation}
We denote by $g_e$ an $H^s(\R^2)$ extension of $g\in H^s(U)$ with $\|g_e\|_{H^s}\lesssim \|g\|_{H^s(U)}$. With this notation the unique solution of \eqref{kp2-linear} for $0\leq t\leq 1$ is the restriction of 
$$u(t)=W_0^t(g,h)=W_{\R^2}(t)g_e+W_0^t(0,h-p) $$
to $U$, where the first summand is the free fifth order KP--II propagator 
\begin{multline*}
W_{\R^2}(t)g_e(x,y)=e^{-tK_2}g_e(x,y)=\cF^{-1} [e^{i(\xi^5-\frac{\eta^2}{\xi})t}\widehat{g_e}(\xi,\eta)](x,y)\\
=\int_{\R^2} e^{i\xi x}e^{i\eta y}e^{i(\xi^5-\frac{\eta^2}{\xi})t}\widehat{g_e}(\xi,\eta) d\xi d\eta,
\end{multline*}
where $K_2=-\D_x^5+\D_x^{-1}\D_y^2$. In addition, we have 
$$
p(x,t)=\mu(t)D_0(W_{\R^2}g_e)=\left.\mu(t)W_{\R^2}(t)g_e(x,y)\right|_{y=0},
$$
with $\mu(t)$ being a smooth function that is compactly supported and equals $1$ on $[-1,1]$.

Calculation of $W_0^t(0,h)$ follows from taking the Fourier transform in $x$ and Laplace transform in $t$ of the linear fifth order KP--II problem \eqref{kp2-linear} with $g=0$
$$
\left\{
\begin{array}{l}
\D_y^2 u(\widehat{\xi},y,\widetilde{\lambda})+(i\lambda \xi+\xi^6)u(\widehat{\xi},y,\widetilde{\lambda})=0,\quad  y>0,\\
u(\widehat{\xi},0,\widetilde{\lambda})=h(\widehat{\xi},\widetilde{\lambda}),\quad u(\widehat{\xi},\infty,\widetilde{\lambda})=0. \\
\end{array}
\right.
$$   
The solution of this is obtained as
$$
u(\widehat{\xi},y,\widetilde{\lambda})=e^{(-i\lambda \xi-\xi^6)^{1/2}y}h(\widehat{\xi},\widetilde{\lambda}), \quad \Re (-i\lambda \xi-\xi^6)^{1/2}<0\ \text{and}\ \Re \lambda >0.
$$
Setting $r(\xi,\lambda):=(-i\lambda \xi-\xi^6)^{1/2}$ with $\Re (-i\lambda \xi-\xi^6)^{1/2}<0$, and $\lambda=\gamma+i\beta$ with $\gamma>0$, we analyze the suitable branches of square root and find 
$$
\lim_{\gamma\to 0^+} r(\xi,\lambda)=\left\{\begin{array}{rl} -|\beta\xi-\xi^6|^{1/2},&\quad \beta\xi-\xi^6>0\\
                                                             i|\beta\xi-\xi^6|^{1/2},&\quad \beta\xi-\xi^6<0\ \text{and}\ \xi>0\\
                                                            -i|\beta\xi-\xi^6|^{1/2},&\quad \beta\xi-\xi^6<0\ \text{and}\ \xi<0. \end{array}\right.
$$ 
Using this and letting $\gamma\to 0^+$, we obtain the solution \eqref{kp2-linear} with $g=0$ by 
$$u(x,y,t)=\cF^{-1}_1 \cL^{-1}[u(\cdot,y,\cdot)](x,t).$$ 
We remark that in the resulting integrals we perform algebraic manipulations including changes of variables of the type $\beta\xi-\xi^6=\pm \eta^2$  and pass from Laplace to Fourier transform in time and end up with the following solution
\begin{multline}\label{linear-soln-g0}
W_0^t(0,h)(x,y)=\iint\limits_{\R^-\times\R^-\, \cup \,\R^+\times \R^+} e^{i\xi x +i \eta y} e^{i(\xi^5-\frac{\eta^2}{\xi})t}\,\frac{2\eta}{\xi}\widehat{h\chi_{t>0}}(\xi,\xi^5-\tfrac{\eta^2}{\xi})d\eta d\xi\\
+\int\limits_{\R} \int\limits_{\R^+}e^{i\xi x - \eta y} e^{i(\xi^5+\frac{\eta^2}{\xi})t}\,\frac{2\eta}{|\xi|}\widehat{h\chi_{t>0}}(\xi,\xi^5+\tfrac{\eta^2}{\xi})d\eta d\xi\\=:W_1h(x,y,t)+W_2h(x,y,t). 
\end{multline}
Note that $W_1$ is now well defined for every $x,y$ and $t$ in $\R$. We extend $W_2$ to all $y$ by multiplying by a smooth function $\rho$ supported on $(-2,\infty)$ that is equal to $1$ on $(0,\infty)$, i.e., 
\begin{multline}\label{w2}
W_2h(x,y,t)=\int\limits_{\R} \int\limits_{\R^+} \rho(\eta y)\,e^{i\xi x - \eta y} e^{i(\xi^5+\frac{\eta^2}{\xi})t}\,\frac{2\eta}{|\xi|}\widehat{h\chi_{t>0}}(\xi,\xi^5+\tfrac{\eta^2}{\xi})d\eta d\xi \\ =\int\limits_{\R} \int\limits_{\R^+}f(\eta y) e^{i\xi x} e^{i(\xi^5+\frac{\eta^2}{\xi})t}\,\frac{2\eta}{|\xi|}\widehat{h\chi_{t>0}}(\xi,\xi^5+\tfrac{\eta^2}{\xi})d\eta d\xi.                                                
\end{multline}
Here $f(y)=\rho(y)e^{-y}$ is a Schwartz function. 
In order for the solution \eqref{linear-soln-g0} above to make sense we require $\chi_{t>0}h\in \cH^s_{x,t}$ where 
\begin{multline*}
\cH^s=\cH^s_{x,t}(\R^2)=\Big\{\varphi:\R^2\to \R: \la \xi^2+\eta^2\ra^{\frac{s}{2}}\,\frac{\eta}{\xi}\widehat{\varphi}(\xi,\xi^5\pm \tfrac{\eta^2}{\xi})\in L^2_{\xi,\eta}\Big\}  \\
=\Big\{\varphi:\R^2\to \R: \la \xi^2+|\xi\beta-\xi^6|\ra^{\frac{s}{2}}\,\frac{|\xi\beta-\xi^6|^{\frac14}}{|\xi|^{\frac12}}\widehat{\varphi}(\xi,\beta)\in L^2_{\xi,\beta}\Big\},                                            
\end{multline*}
for $s\in\R$.  In the next section we will prove a Kato smoothing estimate, see Proposition~\ref{freekp2:y=0}, that implies that the space $\cH^s(U)$ is the natural choice
for the boundary data.
     
 We now establish embedding and extension properties of these spaces.       
\begin{lemma}\label{lem:embedding}
For $s>1$, the space $\mathcal H^s_{x,t}$ embeds continuously into $C^0_{x,t}$. Moreover, for $s>\frac12$, we have the trace lemma; the space $\mathcal H^s_{x,t}$ embeds continuously into $C^0_tL^2_x$, and  in particular $\sup_t \|\varphi\|_{ L^2_x}\les \| \varphi\|_{\mathcal H^s_{x,t}}$.
\end{lemma}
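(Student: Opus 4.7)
The plan is to work with the second characterization of $\mathcal H^s$ (the one parametrized by the dual variable $\beta$), since in this form the Fourier inversion formula $\varphi(x,t)=(2\pi)^{-2}\iint e^{i\xi x+i\beta t}\wh\varphi(\xi,\beta)\,d\xi\,d\beta$ is ready to use. Both embeddings then reduce to a weighted Cauchy--Schwarz argument controlled by the elementary one--variable integral
\[
I(\xi):=\int_{\R}\la \xi^2+|u|\ra^{-s}|u|^{-1/2}\,du,
\]
obtained from the $\beta$-integral of the reciprocal weight after the change of variables $u=\xi\beta-\xi^6$ (so $d\beta=du/|\xi|$, and the Jacobian cancels the $|\xi|^{-1}$ in the weight).

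For part (ii), I would first apply Plancherel in $x$ to write $\|\varphi(\cdot,t)\|_{L^2_x}^2=(2\pi)^{-1}\int_\R|\cF_x\varphi(\xi,t)|^2 d\xi$, with $\cF_x\varphi(\xi,t)=(2\pi)^{-1}\int_\R e^{i\beta t}\wh\varphi(\xi,\beta)\,d\beta$. Cauchy--Schwarz in $\beta$, inserting and removing the weight $w(\xi,\beta):=\la\xi^2+|\xi\beta-\xi^6|\ra^{s}\tfrac{|\xi\beta-\xi^6|^{1/2}}{|\xi|}$ from the definition of $\cH^s$, gives
\[
|\cF_x\varphi(\xi,t)|^2\le \Bigl(\int\tfrac{1}{w(\xi,\beta)}\,d\beta\Bigr)\Bigl(\int w(\xi,\beta)|\wh\varphi(\xi,\beta)|^2 d\beta\Bigr).
\]
The change of variables above reduces $\int d\beta/w(\xi,\beta)$ to $I(\xi)$. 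A quick split into $|u|\lesssim \xi^2$ and $|u|\gtrsim \xi^2$ shows $I(\xi)=O(1)$ uniformly in $\xi$ whenever $s>\tfrac12$. Integrating in $\xi$ then yields $\sup_t\|\varphi(\cdot,t)\|_{L^2_x}\lesssim\|\varphi\|_{\cH^s}$. Continuity in $t$ with values in $L^2_x$ follows from dominated convergence applied to $\cF_x\varphi(\xi,t_n)-\cF_x\varphi(\xi,t)$: the integrand tends to zero pointwise and the same weighted Cauchy--Schwarz produces an $L^2_\xi$-integrable majorant independent of $n$, so the $L^2_\xi$ difference tends to $0$.

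For part (i), I would bound $\|\varphi\|_{L^\infty_{x,t}}\le(2\pi)^{-2}\|\wh\varphi\|_{L^1(\R^2)}$ and apply the same Cauchy--Schwarz, but now in both variables:
\[
\|\wh\varphi\|_{L^1(\R^2)}\le\|\varphi\|_{\cH^s}\Bigl(\iint\tfrac{1}{w(\xi,\beta)}\,d\xi\,d\beta\Bigr)^{1/2}=\|\varphi\|_{\cH^s}\Bigl(\int_\R I(\xi)\,d\xi\Bigr)^{1/2}.
\]
The earlier case analysis shows $I(\xi)\les \la\xi\ra^{1-2s}$, so $\int I$ converges exactly when $s>1$, giving the desired bound; continuity of $\varphi$ then follows from dominated convergence in the Fourier inversion integral (i.e.\ Riemann--Lebesgue), so $\varphi\in C^0_{x,t}$.

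The only non--trivial point is the asymptotic estimate $I(\xi)\les \la\xi\ra^{1-2s}$, which determines both integrability thresholds $s>\tfrac12$ and $s>1$; it is a routine splitting but is where the precise exponents enter, so I would carry it out carefully and record it as an elementary lemma before invoking it in both parts.
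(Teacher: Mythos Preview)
Your proposal is correct and follows essentially the same approach as the paper: Cauchy--Schwarz against the $\cH^s$ weight, reduction to the reciprocal-weight integral, and a change of variable in $\beta$. The only cosmetic difference is that the paper substitutes $\rho^2=|\xi\beta-\xi^6|$ rather than your $u=\xi\beta-\xi^6$, which absorbs the $|u|^{-1/2}$ factor and yields the clean integral $\int_{\R}\la\xi^2+\rho^2\ra^{-s}\,d\rho$, making the thresholds $s>\tfrac12$ and $s>1$ immediate without the case split.
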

\begin{proof}
For the first claim, it suffices to prove that given $\varphi \in \mathcal H^s_{x,t}$,
$\|\widehat\varphi\|_{L^1(\R^2)}\les \| \varphi\|_{\mathcal H^s_{x,t}}$.
By the Cauchy--Schwarz inequality this follows from
$$
\int_{\R^2}\frac{|\xi| }{\la\xi^2+|\xi\beta-\xi^6|\ra^s|\xi\beta-\xi^6|^{\frac12}}d\xi d\beta
=\int_{\R^2}\frac{1 }{\la\xi^2+\rho^2\ra^s} d\xi d\rho<\infty
$$
since $s>1$. In the first equality we used the change of variable $\rho^2=|\xi\beta-\xi^6|$ in the $\beta$ integral. 

Similarly, for the second claim it suffices to prove that
$\|\widehat\varphi\|_{L^2_\xi L^1_\beta }\les \| \varphi\|_{\mathcal H^s_{x,t}}$. By the Cauchy--Schwarz inequality in the $\beta$ integral and the same change of variable this follows from
$$
\sup_\xi \int_{\R}\frac{|\xi| }{\la\xi^2+|\xi\beta-\xi^6|\ra^s|\xi\beta-\xi^6|^{\frac12}}  d\beta
=\sup_\xi  \int_{\R }\frac{1 }{\la\xi^2+\rho^2\ra^s}  d\rho\les 1,
$$
since $s>\frac12$.
\end{proof}
 
\begin{lemma}\label{lem:extend}
For $-\frac32<s<\frac12$, we have 
$$
\|\chi_{t>0} \varphi(x,t)\|_{\mathcal H^s_{x,t}(\R^2)}\les \| \varphi \|_{\mathcal H^s (U)}.
$$ 
Moreover, for $\frac12<s<\frac52$, we have the same bound provided that the trace $\varphi(x,0)$ is zero.
\end{lemma}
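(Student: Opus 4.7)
The plan is to freeze the $x$-Fourier variable $\xi$ and establish the bound slice-by-slice in $\xi$, with all constants independent of $\xi$. Since multiplication by $\chi_{t>0}$ commutes with the partial Fourier transform in $x$, writing $v_\xi(t) := \cF_1\varphi(\xi,t)$, it suffices to establish, uniformly in $\xi\neq 0$, that
$$\|\chi_{t>0} v_\xi\|_{L^2_\beta(w_\xi d\beta)} \les \|v_\xi\|_{L^2_\beta(w_\xi d\beta)},$$
where $w_\xi(\beta) = \langle\xi^2+|\xi\beta-\xi^6|\rangle^s |\xi\beta-\xi^6|^{1/2}/|\xi|$ and $L^2_\beta(w_\xi d\beta)$ denotes the weighted $L^2$ norm applied to the time-Fourier transform of $v_\xi$; integrating in $\xi$ then yields the claim.

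First I would shift by $\sigma = \beta-\xi^5$, which corresponds to multiplying $v_\xi$ by the unitary factor $e^{-it\xi^5}$ (commuting with $\chi_{t>0}$), leaving the simplified weight
$$w_\xi(\sigma) = \langle\xi^2+|\xi||\sigma|\rangle^s\,\frac{|\sigma|^{1/2}}{|\xi|^{1/2}}.$$
Using the elementary estimate $(a+b)^s\sim a^s+b^s$ for $a,b>0$, one verifies that $w_\xi(\sigma)$ is pointwise comparable to $c_1(\xi)|\sigma|^{1/2}+c_2(\xi)|\sigma|^{s+1/2}$ with $c_1(\xi)\sim\langle\xi\rangle^{2s}|\xi|^{-1/2}$ and $c_2(\xi)\sim|\xi|^{s-1/2}$. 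Hence the slice norm is equivalent to a weighted sum of homogeneous Sobolev norms in time:
$$I(\xi) := \|v_\xi\|_{L^2_\beta(w_\xi d\beta)}^2 \sim c_1(\xi)\,\|v_\xi\|_{\dot H^{1/4}_t}^2 + c_2(\xi)\,\|v_\xi\|_{\dot H^{(s+1/2)/2}_t}^2.$$

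Next I would invoke the classical one-dimensional extension lemma: multiplication by $\chi_{t>0}$ is bounded on $\dot H^\alpha(\R)$ for $|\alpha|<1/2$, and on the trace-zero subspace $\{f\in H^\alpha(\R):f(0)=0\}$ for $\alpha\in(1/2,3/2)$. For $s\in(-3/2,1/2)$, both effective exponents $1/4$ and $(s+1/2)/2$ lie in $(-1/2,1/2)$, so the first case applies to each of the two terms. For $s\in(1/2,5/2)$, we have $(s+1/2)/2\in(1/2,3/2)$, and the trace-zero version applies because the hypothesis $\varphi(x,0)=0$ translates, via $\cF_1$, into $v_\xi(0)=0$ for a.e. $\xi$; the $\dot H^{1/4}$ term requires no trace condition.

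The main technical obstacle will be establishing the two-sided equivalence between $I(\xi)$ and the weighted sum of Sobolev norms with constants independent of $\xi$, which requires splitting the $\sigma$ integral at the transition point $|\sigma|\sim\langle\xi\rangle^2/|\xi|$ and treating the regimes $|\xi|\les 1$ and $|\xi|\ges 1$ separately. Crucially, the $\xi$-dependent prefactors $c_1(\xi)$ and $c_2(\xi)$ cancel in the ratio controlling the multiplier bound, so only the uniform-in-$\xi$ one-dimensional constant from the extension lemma is needed, and integration in $\xi$ then completes the proof in both ranges.
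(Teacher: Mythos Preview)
Your slice-by-slice reduction and the translation $\sigma=\beta-\xi^5$ are exactly right, and the core idea of identifying the weight with a combination of power weights is sound. However, the ``elementary estimate $(a+b)^s\sim a^s+b^s$'' that you invoke is only valid for $s\ge 0$; for $s<0$ one has instead $(a+b)^s\sim\min(a^s,b^s)$. Concretely, for $s<0$ the weight satisfies $w_\xi(\sigma)\sim \min\big(c_1(\xi)|\sigma|^{1/2},\,c_2(\xi)|\sigma|^{s+1/2}\big)$, not a sum, so the decomposition $I(\xi)\sim c_1\|v_\xi\|_{\dot H^{1/4}}^2+c_2\|v_\xi\|_{\dot H^{(2s+1)/4}}^2$ fails in the range $-\tfrac32<s<0$: the right-hand side can be much larger than the left (take $\sigma\to 0$, where $|\sigma|^{s+1/2}\to\infty$ while $w_\xi(\sigma)\to 0$). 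Since this negative range is precisely what the paper needs to run the shift-by-$2$ reduction for the second part, the gap is not merely cosmetic. One fix is to dualize: for $s<0$ the inverse weight $w_\xi^{-1}$ \emph{is} comparable to a sum of power weights with exponents $-\tfrac12$ and $-s-\tfrac12$, both in $(-1,1)$, and boundedness of $\chi_{t>0}$ on $L^2(w_\xi)$ is equivalent to boundedness on $L^2(w_\xi^{-1})$. Alternatively, observe that both the sum and the minimum of two $A_2$ weights are again $A_2$ with controlled constant, which recovers the paper's argument.

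The paper takes a cleaner unified route: it shows directly that $m(\xi,\beta)=\langle\xi^2+|\xi\beta-\xi^6|\rangle^s|\xi\beta-\xi^6|^{1/2}/|\xi|$ is an $A_2$ weight in $\beta$ uniformly in $\xi$, by reducing via translation and dilation invariance of the $A_2$ constant to the single-variable weight $\omega(\beta)=\langle\beta\rangle^s|\beta|^{1/2}$, which is $A_2$ exactly for $-\tfrac32<s<\tfrac12$. For the second part ($\tfrac12<s<\tfrac52$), the paper writes the $\mathcal H^s$ norm as the $\mathcal H^{s-2}$ norm of $T\varphi$ where $T$ has multiplier $1+\xi^2+|\xi\beta-\xi^6|$, splits $T=T_1+T_2$ with multipliers $1+\xi^2$ and $\xi\beta-\xi^6$, uses the trace-zero hypothesis to commute each $T_j$ through $\chi_{t>0}$, and then applies the first part at regularity $s-2\in(-\tfrac32,\tfrac12)$. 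Your direct trace-zero extension in $\dot H^{(2s+1)/4}$ is a legitimate alternative for the second part, but it relies on the intersection $\dot H^{1/4}\cap\dot H^{(2s+1)/4}$ to make the pointwise value $v_\xi(0)$ well defined and should be stated carefully.
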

\begin{proof}
Since $\mF\big(\chi_{t>0}  \varphi\big)(\xi,\beta) = H \widehat\varphi(\xi,\beta),$
where $H$ is essentially the Hilbert transform in the $\beta$ variable:
$$ Hf(\xi,\beta)= \mF_t\big(\chi_{t>0} f(\xi,t^\vee)\big)(\beta),$$ 
It suffices to prove that $m(\xi,\beta)= \la \xi^2+|\xi\beta-\xi^6|\ra^{s}\,\frac{|\xi\beta-\xi^6|^{\frac12}}{|\xi| }$ is an $A_2$ weight in $\beta$ uniformly in $\xi$ for $-\frac32<s<\frac12$, see \cite{JD}. We first note that $\omega(\beta)=\la \beta\ra^s |\beta|^{\frac12}$ is an $A_2$ weight for $-\frac32<s<\frac12$.
Recalling that the $A_2$ constant is invariant under dilations, translations and scaling, we can replace $m$ with $ \la \xi^2+| \beta |\ra^{s} | \beta |^{\frac12}$. Noting that for $|\xi|<1$, we can further simplify $m$ to $\omega$, the statement follows in this case. For $|\xi|>1$, we can consider 
$( \xi^2+| \beta |)^{s} | \beta |^{\frac12}$, which once again boils down to $\omega$ by scaling and dilating. 

For the second part, we note that 
$\|\chi_{t>0}  \varphi \|_{\mathcal H^s_{x,t}(\R^2)} =\|T\big(\chi_{t>0}  \varphi \big)\|_{\mathcal H^{s-2}_{x,t}(\R^2)}$, where $T$ is the multiplier operator with the multiplier $1+\xi^2+|\xi\beta-\xi^6|$. Furthermore
$$
\|T\big(\chi_{t>0}   \varphi\big)\|_{\mathcal H^{s-2}_{x,t}(\R^2)}\leq \|T_1\big(\chi_{t>0}   \varphi\big)\|_{\mathcal H^{s-2}_{x,t}(\R^2)} + \|T_2\big(\chi_{t>0} \varphi\big)\|_{\mathcal H^{s-2}_{x,t}(\R^2)},
$$
where $T_1$ and $T_2$ are multiplier operators with multipliers $1+\xi^2$ and $\xi\beta-\xi^6$, respectively. Since $\varphi$ has trace zero, $\partial_t\big(\chi_{t>0}  \varphi\big) =\chi_{t>0}   \partial_t  \varphi$ in the sense of distributions. Therefore, we have   $T_j\big(\chi_{t>0}   \varphi\big)= \chi_{t>0}  T_j\varphi$, $j=1,2$. Also using the first part for $s-2$, we obtain
 $$
\|T\big(\chi_{t>0}   \varphi\big)\|_{\mathcal H^{s-2}_{x,t}(\R^2)}\les \|T_1  \varphi \|_{\mathcal H^{s-2}_{x,t}(U)} + \|T_2   \varphi \|_{\mathcal H^{s-2}_{x,t}(U)} \les \|\varphi \|_{\mathcal H^{s}_{x,t}(U)}.
$$
\end{proof}

We now consider the integral equation
\begin{equation}\label{eq:duhamel}
u(t)=\mu(t)W_0^t(g,h)  + \mu(t) \int_0^tW_{\R^2}(t- t^\prime)  F(u) \,d t^\prime - \mu(t) W_0^t\big(0, q  \big)(t),
\end{equation}
where
$$
F(u)= -\mu(t/T) uu_x   \text{ and }  q(t)=\mu(t ) D_0\Big(\int_0^tW_\R(t- t^\prime)  F(u)\, d t^\prime \Big).
$$ 
In what follows we will prove that the  integral equation \eqref{eq:duhamel} has a unique solution in the $X^{s,b} $ space \eqref{def:xsb} on $\R^2\times \R$ for some $T<1$. The a priori linear estimates in Section~\ref{sec:lin} will guarantee that the solution also belongs to 
$  C^0_tH^s_{x,y}([0,T]\times U) \cap C^0_y\cH^s_{x,t}(\R^+ \times \R\times [0,T]),$ and that it depends  continuously  on data in these spaces, see Section~\ref{sec:lwp}.  
Using the definition of the boundary operator, it is clear that the restriction of $u$ to $U\times [0,T]$ satisfies  \eqref{kp2}  in the distributional sense. Also note that the smooth solutions of \eqref{eq:duhamel} satisfy \eqref{kp2}  in the classical sense.

The Bourgain spaces, $X^{s,b}(\R^2\times\R)$ (see \cite{bourgain,Bou2}), will be defined as the closure of compactly supported smooth functions under the norm 
\be\label{def:xsb}\|u\|_{X^{s,b}} :=\|\langle \tau-\xi^5+\tfrac{\eta^2}{\xi} \rangle^{b} \langle \xi^2+\eta^2 \rangle^{\frac{s}{2}}\,\widehat{u}(\xi,\eta,\tau)\|_{L_{\xi,\eta,\tau}^2}.  
\ee  
We  recall the embedding $X^{s,b}\subset C^0_t H^{s}_{x,y}$ for $b>\frac{1}{2}$ and the following inequalities from \cite{bourgain,gtv,etbook}. 

For any $s,b$ we have
\begin{equation}\label{eq:xs1}
\|\eta(t)W_{\R^2} g\|_{X^{s,b}}\les \|g\|_{H^s}.
\end{equation}
For any $s\in \mathbb R$,  $0\leq b_1<\frac12$, and $0\leq b_2\leq 1-b_1$, we have
\begin{equation}\label{eq:xs2}
\Big\| \eta(t) \int_0^t W_\R(t-t^\prime)  F(t^\prime ) dt^\prime \Big\|_{X^{s,b_2} }\lesssim   \|F\|_{X^{s,-b_1} }.
\end{equation}
Moreover, for $T<1$, and $-\frac12<b_1<b_2<\frac12$, we have
\begin{equation}\label{eq:xs3}
\|\eta(t/T) F \|_{X^{s,b_1}}\les T^{b_2-b_1} \|F\|_{X^{s,b_2}}.
\end{equation}

 \section{A priori linear estimates} \label{sec:lin}
 In this section we provide a priori estimates for the linear terms in \eqref{eq:duhamel}. 
We start with the following Kato smoothing type statement  involving the $\cH^s$ norm for the linear group. This  estimate and the Proposition~\ref{prop:cHstoHs} below justify the choice of $\cH^s$ space    in Definition~\ref{def:lwp}.                                                          
\begin{prop}\label{freekp2:y=0} For $s\geq 0$, and $g\in H^s(\R^2)$, we have
$\mu(t) W_{\R^2}g \in C^0_y \cH^s_{x,t}$, and we have
$$\|\mu(t) W_{\R^2}g \|_{L^\infty_y \cH^s_{x,t}}\les \|g\|_{H^s}.
$$
\end{prop}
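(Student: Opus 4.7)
The plan is to work on the Fourier side. Fix $y$, set $v(x,t)=\mu(t)W_{\R^2}g(x,y,t)$, and compute the Fourier transform of $v$ in $(x,t)$ explicitly:
\begin{equation*}
\widehat v^{(x,t)}(\xi, y, \beta) = c\int_{\R} \widehat\mu\bigl(\beta - \xi^5 + \tfrac{\eta^2}{\xi}\bigr)\, e^{i\eta y}\, \widehat g(\xi, \eta)\, d\eta.
\end{equation*}
Since the $y$-dependence sits only in the factor $e^{i\eta y}$ of unit modulus, any bound on $|\widehat v^{(x,t)}|$ is automatically uniform in $y$, which will give the $L^\infty_y$ part of the statement.

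Apply Cauchy--Schwarz in the $\eta$ integral with weight $\langle\xi^2+\eta^2\rangle^{s}$ to obtain $|\widehat v^{(x,t)}|^2\lesssim A(\xi,\beta)\,B(\xi,\beta)$, where $A(\xi,\beta)=\int\langle\xi^2+\eta^2\rangle^{-s}|\widehat\mu(\beta-\xi^5+\eta^2/\xi)|\,d\eta$ and $B(\xi,\beta)$ is the analogous integral with $\langle\xi^2+\eta^2\rangle^{s}|\widehat g|^2$. Inserting this bound into the second form of the $\cH^s_{x,t}$ norm from \eqref{eq:cHs}, parameterizing $\beta=\xi^5\pm\eta_0^2/\xi$, and switching the order of integration via Tonelli reduce the desired inequality $\|v(\cdot,y,\cdot)\|_{\cH^s_{x,t}}^2\lesssim \|g\|_{H^s}^2$ to the uniform kernel estimate
\begin{equation*}
\Phi(\xi,\eta):=\int_\R \langle\xi^2+\eta_0^2\rangle^{s}\,\frac{\eta_0^2}{\xi^{2}}\,A\bigl(\xi,\xi^5\pm\tfrac{\eta_0^2}{\xi}\bigr)\, \Bigl|\widehat\mu\bigl(\tfrac{\eta^2\pm\eta_0^2}{\xi}\bigr)\Bigr|\, d\eta_0\ \lesssim\ 1,
\end{equation*}
holding uniformly in $(\xi,\eta)$ for each choice of sign.

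The bulk of the work is establishing $\Phi\lesssim 1$. I would first evaluate $A$ via the substitution $u=(\eta'^2-\eta_0^2)/\xi$, which contributes the Jacobian $|\xi|/(2|\eta'|)$; Schwartz decay $|\widehat\mu|\lesssim\langle\cdot\rangle^{-N}$ then yields $A\lesssim|\xi|\langle\xi^2+\eta_0^2\rangle^{-s}/|\eta_0|$ when $\eta_0^2\gtrsim|\xi|$ and $A\lesssim\sqrt{|\xi|}\langle\xi^2\rangle^{-s}$ when $\eta_0^2\lesssim|\xi|$. Substituting these back into $\Phi$ on the minus sheet, the high--frequency contribution collapses to $\int(|\eta_0|/|\xi|)|\widehat\mu((\eta^2-\eta_0^2)/\xi)|\,d\eta_0$, which after the substitution $r=(\eta^2-\eta_0^2)/\xi$ becomes $\tfrac12\|\widehat\mu\|_{L^1}$; the low--frequency contribution is bounded by $\int_{|\eta_0|\lesssim\sqrt{|\xi|}}\eta_0^2/|\xi|^{3/2}\,d\eta_0\lesssim 1$. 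The plus sheet is easier, since $|\widehat\mu((\eta^2+\eta_0^2)/\xi)|$ itself localizes $\eta_0$ and $\eta$ to $\lesssim\sqrt{|\xi|}$, and the same low--frequency bound applies.

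The main obstacle is the $1/|\xi|$ factor in the $\cH^s$ weight, which blows up as $\xi\to 0$ and has no evident counterpart in $A$; the compensating ingredient is precisely the $|\xi|/|\eta'|$ Jacobian produced by the $u$-substitution in the definition of $A$, together with the Schwartz decay of $\widehat\mu$ in the outer integral. Finally, continuity $v\in C^0_y\cH^s_{x,t}$ follows by applying the same estimate to $v(x,y_1,t)-v(x,y_2,t)$: its Fourier integrand carries $e^{i\eta y_1}-e^{i\eta y_2}$, a factor bounded by $2$ and tending pointwise to $0$, so dominated convergence on the $\cH^s_{x,t}$ integral (with the uniform bound as dominator) gives $\|v(\cdot,y_1,\cdot)-v(\cdot,y_2,\cdot)\|_{\cH^s_{x,t}}\to 0$ as $y_1\to y_2$.
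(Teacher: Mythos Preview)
Your argument is correct and follows essentially the same route as the paper: compute the $(x,t)$--Fourier transform, apply Cauchy--Schwarz in the inner $\eta$--integral, reduce to a pointwise kernel bound, and obtain continuity in $y$ by dominated convergence. The only cosmetic difference is that you insert the weight $\langle\xi^2+\eta^2\rangle^{\pm s}$ into the Cauchy--Schwarz step (so your target is $\Phi\lesssim 1$), whereas the paper applies Cauchy--Schwarz unweighted and instead proves the equivalent estimate $J(\xi,\theta)=\int\langle\xi^2+\eta^2\rangle^{s}\tfrac{|\eta|}{|\xi|}\langle(\theta^2-\eta^2)/\xi\rangle^{-M}d\eta\lesssim\langle\xi^2+\theta^2\rangle^{s}$ via the case split $|\eta|\lesssim|\theta|$ versus $|\eta|\gg|\theta|$; the two organizations are interchangeable and your high/low--frequency splitting in $\eta_0$ plays the same role.
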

\begin{proof}
For short, set $W(x,y,t) =\mu(t) W_{\R^2}g (x,y,t)$. Taking the Fourier transform in $x$ we get
$$
 W(\widehat{\xi},y,t)=\int_{\R} \mu(t) e^{i(\xi^5-\frac{\theta^2}{\xi})t} e^{i\theta y}\widehat{g}(\xi,\theta)d\theta.
$$
Now the Fourier transform in $t$ gives
$$
W(\widehat{\xi},y,\widehat{\eta})  =\int_{\R} \widehat{\mu}(\eta-\xi^5+\tfrac{\theta^2}{\xi})e^{i\theta y} \widehat{g}(\xi,\theta)d\theta.
$$ 
By dominated convergence theorem, the statement follows from the claim:
$$
I:=\int_{\R^2} \la \xi^2+\eta^2\ra^s\, \frac{\eta^2}{\xi^2} \Big(\int_{\R} |\widehat{\mu}(\tfrac{\theta^2\pm\eta^2}{\xi})| |\widehat{g }(\xi,\theta)|d\theta\Big)^2 d\eta d\xi\les \|g\|_{H^s}^2.
$$
Applying the Cauchy--Schwarz inequality to the $\theta$-integral we get
$$
\Big(\int_{\R} |\widehat{\mu}(\tfrac{\theta^2\pm\eta^2}{\xi})| |\widehat{g }(\xi,\theta)|d\theta\Big)^2 \leq \left\|\widehat{\mu}(\tfrac{\theta^2\pm\eta^2}{\xi})\right\|_{L^1_{\theta}} \int_{\R} |\widehat\mu (\tfrac{\theta^2\pm\eta^2}{\xi})| |\widehat{g }(\xi,\theta)|^2d\theta,
$$
and using the fact that for any $M>0$
$$
|\widehat{\mu}(\tfrac{\theta^2\pm\eta^2}{\xi})|\lesssim \frac{1}{\big\la\frac{ \theta^2\pm\eta^2 }{ \xi }\big\ra^{M}},
$$
we have the estimate
$$
\left\|\widehat{\mu}(\tfrac{\theta^2\pm\eta^2}{\xi})\right\|_{L^1_{\theta}}\lesssim \int_{\R}  \frac{d\theta}{\big\la\frac{ \theta^2\pm\eta^2 }{ \xi }\big\ra^{M}}\lesssim \int_0^{\infty} \frac{|\xi|^{1/2}d\rho}{\rho^{1/2}\la\rho\pm \frac{\eta^2}{\xi}\ra^M}\lesssim \frac{|\xi|^{1/2}}{\la \frac{\eta^2}{\xi}\ra^{1/2}},
$$
where we used  the change of variable $\rho=\frac{\theta^2}{|\xi|}$ and then Lemma~\ref{lem:sums} with $M>1$.

We now combine these estimates to bound the integral $I$ as follows
$$
I\lesssim \int_{\R^3} \la \xi^2+\eta^2 \ra^s \frac{|\eta| |\xi|^{ -1}}{\big\la\frac{ \theta^2\pm\eta^2 }{ \xi }\big\ra^{M}} |\widehat{g }(\xi,\theta)|^2 d\eta d\theta d\xi.
$$
It suffices to consider only $|\theta^2-\eta^2|$ case of $|\theta^2\pm\eta^2|$ in the denominator and show that 
$$
J:=\int_{\R} \la \xi^2+\eta^2 \ra^s \frac{|\eta| |\xi|^{ -1}}{\big\la\frac{ \theta^2-\eta^2 }{ \xi }\big\ra^{M}} d\eta \lesssim \la \xi^2+\theta^2 \ra^s.
$$
We consider two cases to prove the desired bound for $J$.

\noindent
\textsc{Case 1:} $|\eta|\les |\theta|$

In this region we have
$$
J\les \int  \la \xi^2+\theta^2 \ra^s \frac{|\eta| d\eta}{|\xi| \Big(1+\frac{|\theta^2-\eta^2|}{|\xi|}\Big)^M}\lesssim \int_{\R} \la \xi^2+\theta^2 \ra^s \frac{|\rho| d\rho}{(1+\rho^2)^M}\lesssim \la \xi^2+\theta^2 \ra^s,
$$
for $M>1$, where $\rho^2=\frac{|\eta^2-\theta^2|}{|\xi|}$.

\noindent
\textsc{Case 2:}   $|\eta|>2  |\theta|$

By $|\theta^2-\eta^2|\gtrsim \eta^2$ and again $\rho^2=\frac{\eta^2}{|\xi|}$ we have
\begin{multline*}
J\lesssim \int_{|\rho|>2\frac{|\theta|}{\sqrt{|\xi|}}} \la \xi^2+|\xi|\rho^2 \ra^s \frac{|\rho| d\rho}{(1+\rho^2)^M}\lesssim \sup_{|\rho|>2\frac{|\theta|}{\sqrt{|\xi|}}} \frac{\la \xi^2+|\xi|\rho^2 \ra^s}{(1+\rho^2)^{M/2}}\,\int_{\R} \frac{|\rho|d\rho}{(1+\rho^2)^{M/2}} \\ 
\lesssim \sup_{|\rho|>2\frac{|\theta|}{\sqrt{|\xi|}}} \frac{1+|\xi|^{2s}+|\xi|^s|\rho|^{2s}}{(1+\rho^2)^{M/2}} \lesssim 1+|\theta|^{2s}+|\xi|^{2s} \lesssim \la \xi^2+\theta^2 \ra^s,
\end{multline*}
for $M>\max(1,2s)$.  
\end{proof}
Next, we establish a priori estimates for the boundary operator in $\cH^s$ spaces:
\begin{prop}\label{prop:verttrace} Fix $s\geq 0$. We have 
$\mu(t)W_1h(x,y,t)$ and $\mu(t)W_2h(x,y,t)\in C^0_y\cH^s_{x,t}$ for $\chi_{t>0}h(x,t)\in\cH^s_{x,t}$. 
\end{prop}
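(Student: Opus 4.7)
The proof follows the template of Proposition~\ref{freekp2:y=0}, so I will indicate only the new features. Set $\phi=\chi_{t>0}h\in\mathcal{H}^s_{x,t}$. The plan is to compute the Fourier transform of $\mu(t)W_jh$ in $x$ and $t$, evaluate it on each of the two characteristic curves $\tau=\xi^5\pm\eta^2/\xi$ that define the $\mathcal{H}^s$ norm, apply Cauchy--Schwarz in the remaining variable $\eta'$ using $|\widehat\mu|$ as an $L^1$ weight, then control $\|\widehat\mu\|_{L^1_{\eta'}}$ by the change of variable $\rho=\eta'^{\,2}/|\xi|$ and Lemma~\ref{lem:sums}. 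The desired $\sup_y$ bound is then obtained because the $y$--dependence is carried only by bounded factors, and continuity in $y$ follows by dominated convergence as in Proposition~\ref{freekp2:y=0}.

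For $\mu(t)W_1 h$, taking Fourier transforms in $x$ and $t$ yields
$$
\mathcal F_{x,t}[\mu W_1h](\xi,y,\tau)=\int \widehat\mu\bigl(\tau-\xi^5+\tfrac{\eta'^2}{\xi}\bigr)e^{i\eta' y}\tfrac{2\eta'}{\xi}\widehat\phi\bigl(\xi,\xi^5-\tfrac{\eta'^2}{\xi}\bigr)\mathbf 1_{\xi\eta'>0}\, d\eta'.
$$
Evaluating at $\tau=\xi^5-\eta^2/\xi$ makes $\widehat\mu$ depend on $(\eta'^{\,2}-\eta^2)/\xi$, and at $\tau=\xi^5+\eta^2/\xi$ on $(\eta'^{\,2}+\eta^2)/\xi$. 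In each case Cauchy--Schwarz in the $\eta'$ integral, together with $|e^{i\eta' y}|=1$, removes the $y$--dependence, and $\|\widehat\mu\|_{L^1_{\eta'}}\lesssim |\xi|^{1/2}/\langle\eta^2/\xi\rangle^{1/2}$ follows from the same change of variable plus Lemma~\ref{lem:sums} used before. Inserting this into the $\mathcal{H}^s$ norm reduces the matter to exactly the integral $J$ of Proposition~\ref{freekp2:y=0} (with $\theta$ playing the role of the frequency parameter of $\phi$), and the two-case analysis $|\eta|\lesssim|\theta|$ vs.\ $|\eta|>2|\theta|$ produces $\langle\xi^2+\theta^2\rangle^s$, hence $\|\mu W_1 h\|_{L^\infty_y\mathcal{H}^s_{x,t}}\lesssim \|\phi\|_{\mathcal{H}^s_{x,t}}$.

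For $\mu(t)W_2h$ the computation is the same except the $\eta'$ integral is over $\mathbb R^+$ and there is an extra factor $f(\eta' y)$ coming from \eqref{w2}. Since $f$ is Schwartz, $|f(\eta' y)|\lesssim 1$ uniformly in $\eta',y$, and the factor can simply be absorbed in Cauchy--Schwarz; the rest of the argument is identical. Now one must still check both restriction curves: on the "$+$" branch the phase difference is $(\eta^2-\eta'^{\,2})/\xi$, which reproduces the $W_1$ estimate, while on the "$-$" branch it is $-(\eta^2+\eta'^{\,2})/\xi$, for which the $L^1_{\eta'}$ bound on $\widehat\mu$ is even better because the denominator is always large. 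Continuity in $y$ follows from dominated convergence using the uniform bound $|f(\eta' y)|\le\|f\|_\infty$ and the Schwartz decay of $f$ for the tails.

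The main obstacle is entirely bookkeeping: each of $W_1,W_2$ lives on a single characteristic curve whereas $\mathcal{H}^s$ involves both, so four combinations must be inspected and the factor $f(\eta' y)$ (respectively $e^{i\eta' y}$) must be handled uniformly in $y$. Once one recognises that each combination reduces to the $L^1$ estimate for $\widehat\mu$ together with the integral $J$ of Proposition~\ref{freekp2:y=0}, no new analytic input is needed.
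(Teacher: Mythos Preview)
Your proposal is correct and follows essentially the same route as the paper: compute the $(x,t)$-Fourier transform, apply Cauchy--Schwarz in the inner integration variable with $|\widehat\mu|$ as weight, bound $\|\widehat\mu\|_{L^1}$ via the change of variable $\rho=\eta'^{\,2}/|\xi|$, and reduce to the integral $J$ from Proposition~\ref{freekp2:y=0}. The paper compresses all of this into a single displayed inequality for $W_2$ (introducing $\widehat\psi(\xi,\eta)=\tfrac{2\eta}{|\xi|}\widehat{\chi_{t>0}h}(\xi,\xi^5+\tfrac{\eta^2}{\xi})$ so that $\psi\in H^s$ by the definition of $\mathcal H^s$) and simply asserts that the rest follows from Proposition~\ref{freekp2:y=0}; your write-up makes the four $\pm$ combinations and the handling of the bounded factors $e^{i\eta' y}$, $f(\eta' y)$ explicit, but the underlying argument is identical.
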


\begin{proof}
Both follow from the proof of Proposition~\ref{freekp2:y=0}. We just note
\begin{multline*}
\|\mu W_2h(y)\|_{\cH^s_{x,t}}^2=\int_{\R^2} \la \xi^2+\eta^2\ra^s\, \frac{\eta^2}{\xi^2} \left|\int_{\R^+} \widehat{\mu}(\tfrac{-\theta^2\pm\eta^2}{\xi})f(\theta y)\widehat{\psi}(\xi,\theta)d\theta\right|^2 d\eta d\xi\\
\lesssim \int_{\R^2}\int_{\R^+} \la \xi^2+\eta^2\ra^s\, \frac{\eta^2}{\xi^2} \left\|\widehat{\mu}(\tfrac{-\theta^2\pm\eta^2}{\xi})\right\|_{L^1_{\theta}} |\widehat{\mu}(\tfrac{-\theta^2\pm\eta^2}{\xi})| |\widehat{\psi}(\xi,\theta)|^2d\theta d\eta d\xi,
\end{multline*}
and that $\widehat{\psi}(\xi,\eta)=\frac{2\eta}{|\xi|}\widehat{\chi_{t>0}h}(\xi,\eta)$ which yields $\psi\in H^s$.

Continuity in $y$ is a consequence of the dominated convergence theorem and $f\in L^1$. 
\end{proof} 
\begin{prop}\label{prop:cHstoHs} Fix $s\geq 0$. We have 
$\mu(t)W_1h(x,y,t)$ and $\mu(t)W_2h(x,y,t)\in C^0_tH^s_{x,y}$ for $\chi_{t>0}h(x,t)\in\cH^s_{x,t}$. 
\end{prop}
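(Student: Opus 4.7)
For $\mu(t)W_1h$, the $(x,y)$-Fourier transform is immediate:
\begin{equation*}
\widehat{\mu W_1h}(\xi,\eta,t)=c\,\mu(t)\,e^{i(\xi^5-\eta^2/\xi)t}\,\tfrac{2\eta}{\xi}\,\widehat{h\chi_{t>0}}(\xi,\xi^5-\tfrac{\eta^2}{\xi})\,\mathbf{1}_{\xi\eta>0}.
\end{equation*}
Multiplying by $\la\xi^2+\eta^2\ra^s$, integrating in $(\xi,\eta)$, and comparing with the definition \eqref{eq:cHs} of $\|\cdot\|_{\cH^s}^2$ gives $\|\mu W_1h(\cdot,\cdot,t)\|_{H^s_{x,y}}\lesssim\|h\chi_{t>0}\|_{\cH^s}$ uniformly in $t$.

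For $\mu(t)W_2h$, the $(x,y)$-Fourier transform produces
\begin{equation*}
\widehat{\mu W_2h}(\xi,\eta'',t)=c\,\mu(t)\int_{\R^+}\tfrac{1}{\eta}\,\widehat f(\eta''/\eta)\,e^{i(\xi^5+\eta^2/\xi)t}\,\tilde\psi(\xi,\eta)\,d\eta,
\end{equation*}
with $\tilde\psi(\xi,\eta)=\tfrac{2\eta}{|\xi|}\widehat{h\chi_{t>0}}(\xi,\xi^5+\tfrac{\eta^2}{\xi})$. A direct Cauchy--Schwarz in $\eta$ fails because $\int_0^\infty|\widehat f(u)|\,du/u=\infty$. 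I would instead insert the weight $|\eta''/\eta|^\alpha$ for a fixed $\alpha\in(0,1)$ before applying Cauchy--Schwarz, obtaining
\begin{equation*}
|\widehat{\mu W_2h}(\xi,\eta'',t)|^2\lesssim C_\alpha\int_{\R^+}\tfrac{|\widehat f(\eta''/\eta)|}{\eta}\bigl(\tfrac{\eta}{|\eta''|}\bigr)^\alpha|\tilde\psi(\xi,\eta)|^2\,d\eta,
\end{equation*}
where $C_\alpha=\int|\widehat f(u)|\,|u|^{\alpha-1}\,du<\infty$ since $\widehat f\in\mathcal S$ and $\alpha>0$. Integrating against $\la\xi^2+\eta''^2\ra^s\,d\xi\,d\eta''$, swapping the order of integration, and substituting $v=\eta''/\eta$ in the inner $\eta''$-integral reduces the main estimate to
\begin{equation*}
\int_\R \la\xi^2+\eta^2v^2\ra^s\,|\widehat f(v)|\,|v|^{-\alpha}\,dv\lesssim \la\xi^2+\eta^2\ra^s,
\end{equation*}
which I verify by splitting at $|v|=1$: on $|v|\le 1$ use $\la\xi^2+\eta^2v^2\ra\le\la\xi^2+\eta^2\ra$ and integrability of $|v|^{-\alpha}$ near $0$ (since $\alpha<1$); on $|v|\ge 1$ use $\la\xi^2+\eta^2v^2\ra\le v^2\la\xi^2+\eta^2\ra$ and absorb $|v|^{2s-\alpha}$ into the Schwartz decay of $\widehat f$. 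The remaining powers of $\eta$ cancel and we obtain $\|\mu W_2h(\cdot,\cdot,t)\|_{H^s_{x,y}}^2\lesssim\|h\chi_{t>0}\|_{\cH^s}^2$ uniformly in $t$.

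Finally, continuity of $t\mapsto\mu(t)W_jh(\cdot,\cdot,t)$, $j=1,2$, in $H^s_{x,y}$ follows from the dominated convergence theorem on the Fourier side: the integrands are continuous in $t$ through $\mu(t)e^{i(\cdots)t}$ (of modulus at most $\|\mu\|_\infty$) and the estimates above furnish a $t$-independent integrable majorant. The main obstacle is the dilation structure hidden in the $y$-Fourier transform $\eta^{-1}\widehat f(\eta''/\eta)$ of $W_2h$, which produces the divergent $\int|\widehat f(u)|\,du/u$ in the naive Cauchy--Schwarz; the weighted version with exponent $\alpha\in(0,1)$ and the separate treatment of $|v|\le 1$ versus $|v|\ge 1$ is the technical heart of the argument.
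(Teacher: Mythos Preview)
Your treatment of $\mu(t)W_1h$ coincides with the paper's: both read off the $(x,y)$-Fourier transform directly and observe that the resulting $H^s_{x,y}$ norm is exactly (half of) the $\cH^s$ norm, with continuity in $t$ by dominated convergence.

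For $\mu(t)W_2h$ your argument is correct but follows a genuinely different route. The paper does \emph{not} take the Fourier transform in $y$; instead it applies Plancherel only in $x$, reduces the $L^2_{x,y}$ bound at $s=0$ to the $L^2$ boundedness of the dilation-type operator $Tg(y)=\int f(\eta y)\,g(\eta)\,d\eta$ (quoted from Lemma~3.2 of \cite{etnls}), and then reaches $s>0$ by an interpolation argument also borrowed from \cite{etnls}. Your approach instead passes fully to the Fourier side, identifies the kernel $\eta^{-1}\widehat f(\eta''/\eta)$, and circumvents the divergent $\int_0^\infty |\widehat f(u)|\,u^{-1}\,du$ by a weighted Cauchy--Schwarz with exponent $\alpha\in(0,1)$; the change of variables $v=\eta''/\eta$ then reduces everything to the scalar inequality $\int \la\xi^2+\eta^2 v^2\ra^s|\widehat f(v)|\,|v|^{-\alpha}\,dv\lesssim \la\xi^2+\eta^2\ra^s$, which you verify by splitting at $|v|=1$. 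The gain is that your argument is self-contained (no appeal to an external lemma) and treats all $s\ge 0$ in one stroke without interpolation; the paper's route is shorter on the page because the operator bound and the interpolation are outsourced. Both approaches obtain continuity in $t$ via dominated convergence in the same way.
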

\begin{proof} Note that  
$$
W_1h(\widehat\xi,\widehat\eta,t)=e^{i(\xi^5-\frac{\eta^2}{\xi})t}\,\frac{2\eta}{\xi}\widehat{h\chi_{t>0}}(\xi,\xi^5-\tfrac{\eta^2}{\xi}) \chi_{\xi\eta>0}.
$$
Therefore, the claims immediately follows from the definition of $\cH^s$ norm and the dominated convergence theorem.

For $W_2$, we first consider the case $s=0$. 
Note that by Plancherel in the $x$ variable, we have 
$$\|W_2h\|_{L^2_{x,y}}^2\les  \Big\|\int f(\eta y)\Big|\frac{ \eta}{\xi}\widehat{h\chi_{t>0}}(\xi,\xi^5+\tfrac{\eta^2}{\xi})\Big|  d\eta \Big\|_{L^2_{\xi,y}}^2\leq
\Big\|\int f(\eta y) g(\eta) d\eta \Big\|_{L^2_{y}}^2, $$
where $g(\eta)=\|\frac{ \eta}{\xi}\widehat{h\chi_{t>0}}(\xi,\xi^5+\tfrac{\eta^2}{\xi}) \|_{L^2_\xi}.$ 
Noting that $\|g\|_{L^2}\les \|\chi_{t>0}h(x,t) \|_{\cH^0_{x,t}}$, the statement follows from the $L^2$ boundedness of the operator 
$$
Tg(y):=\int f(\eta y) g(\eta) d\eta,$$
which was proved in Lemma 3.2 of \cite{etnls}. The statement for $s>0$ follows from this and interpolation as described in Lemma 3.2 of \cite{etnls}.
\end{proof}


Since we will run the fixed point argument in Bourgain spaces we now obtain estimates for the boundary operator in $X^{s,b}$. First  recall that $\mu(t)W_{\R^2} g_e(x,y,t)\in X^{s,b}$ for every $b\in\R$ and $s \geq0$, see e.g. \cite{etbook}.
Upon this,  with
$$\widehat{\psi}(\xi,\eta)=\frac{2\eta}{\xi}\widehat{h\chi_{t>0}}(\xi,\xi^5-\frac{\eta^2}{\xi}),$$
we easily see that
$$
\|\mu W_1 h\|_{X^{s,b }}\leq \|\mu W_{\R^2} \psi\|_{X^{s,b }}\lesssim \|\psi\|_{H^s}\lesssim \|\chi_{t>0}h\|_{\cH^s_{x,t}},
$$
which entails $\mu W_1 h(x,y,t)\in X^{s,b}$. 
\begin{prop}\label{w2:xsbb1}
Let $b\leq\frac12$ and $s \geq 0$. Then for $h$ satifying $\chi_{t>0}   h\in \cH^s_{x,t}(\R^2)$,  we have 
$$\|\mu(t) W_2 h\|_{ X^{s,b }}\les \|\chi_{t>0}  h\|_{\cH^s_{x,t}(\R^2)}.$$
\end{prop}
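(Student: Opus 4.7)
The plan is to reduce the bound to an explicit integral estimate on the Fourier side and match it against the $\cH^{s}$-norm of the data. First, I would compute the full Fourier transform of $\mu(t) W_2 h$ in $(x,y,t)$. Using $\mathcal{F}_y[f(\eta\,\cdot)](\theta) = \widehat f(\theta/\eta)/\eta$, the factor $1/\eta$ cancels against the $\eta$ appearing in the amplitude $\frac{2\eta}{|\xi|}$, producing the clean representation
$$
\widehat{\mu W_2 h}(\xi,\theta,\tau) \;=\; \frac{C}{|\xi|}\int_{0}^{\infty}\widehat f(\theta/\eta)\,\widehat\mu\bigl(\tau-\xi^5-\tfrac{\eta^2}{\xi}\bigr)\,\widehat{h\chi_{t>0}}\bigl(\xi,\xi^5+\tfrac{\eta^2}{\xi}\bigr)\,d\eta.
$$
Since $\widehat\mu$ localizes $\tau-\xi^5$ near $\eta^{2}/\xi$, I expect the $X^{s,b}$-weight $\langle\tau-\xi^5+\theta^2/\xi\rangle^{2b}$ to be comparable to $\langle(\eta^{2}+\theta^{2})/\xi\rangle^{2b}$ on the effective support of the integrand.

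To exploit this for $0<b\leq \tfrac12$, I would use the pointwise inequality
$$
\langle\tau-\xi^5+\theta^2/\xi\rangle\;\leq\;C\,\langle\tau-\xi^5-\tfrac{\eta^2}{\xi}\rangle\,\langle(\eta^2+\theta^2)/\xi\rangle,
$$
apply Cauchy--Schwarz in $\eta$ against the finite measure $|\widehat\mu(\tau-\xi^5-\eta^{2}/\xi)|\,d\eta$, and use that $\widehat\mu\in\mathcal{S}$ so $\langle\cdot\rangle^{2b}|\widehat\mu(\cdot)|$ remains integrable. Integrating in $\tau$ first, then in $\theta$ via the substitution $\theta=\eta\alpha$ and the Schwartz decay of $\widehat f$, and finally changing variables $\eta\mapsto\beta=\xi^{5}+\eta^{2}/\xi$, would yield an estimate of the form
$$
\|\mu W_2 h\|_{X^{s,b}}^{2}\;\lesssim\;\int_{\R}\int_{I_{\xi}}|\widehat{h\chi_{t>0}}(\xi,\beta)|^{2}\,\langle\xi^{2}+|\xi\beta-\xi^{6}|\rangle^{s}\,\frac{\langle|\beta-\xi^{5}|\rangle^{2b}}{\sqrt{|\xi\beta-\xi^{6}|}}\,d\beta\,d\xi,
$$
where $I_\xi=\{\beta:\xi(\beta-\xi^5)>0\}$. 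The case $b\leq 0$ is simpler, since $\langle\cdot\rangle^{2b}\leq 1$ and no intermediate inequality is needed.

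Comparing with the $\cH^{s}$-weight $\frac{|\xi\beta-\xi^{6}|^{1/2}}{|\xi|}\langle\xi^{2}+|\xi\beta-\xi^{6}|\rangle^{s}$, the matching reduces to the pointwise ratio $\frac{\langle|\beta-\xi^{5}|\rangle^{2b}}{|\beta-\xi^{5}|}$, which is bounded exactly when $b\leq \tfrac12$ in the high-frequency regime $|\beta-\xi^{5}|\gtrsim 1$. This is where the sharp hypothesis $b\leq\tfrac12$ enters.

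The main obstacle will be the low-frequency regime $|\beta-\xi^{5}|\ll 1$ (equivalently $\eta^{2}\ll|\xi|$), where this ratio blows up and the naive Cauchy--Schwarz bound loses the smoothing coming from the rapid decay of $\widehat f(\theta/\eta)$ as $\eta\to 0$ with $\theta$ of order $\eta$. To handle this, I anticipate proceeding as in the proof of Proposition~\ref{prop:cHstoHs}: establish the base case $b=0$ directly via Plancherel in $(x,t)$ together with the $L^{2}$-boundedness of the operator $Tg(y)=\int f(\eta y)g(\eta)\,d\eta$ from Lemma~3.2 of \cite{etnls} applied in the $\eta$-variable, prove the endpoint $b=\tfrac12$ by the Fourier argument above (which becomes tight thanks to the weight compensating the $1/\sqrt{\cdot}$ singularity), and then interpolate in $b\in[0,\tfrac12]$ to cover the full range.
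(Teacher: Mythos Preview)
Your Fourier computation and the pointwise inequality $\langle\tau-\xi^5+\theta^2/\xi\rangle\lesssim\langle\tau-\xi^5-\eta^2/\xi\rangle\langle(\eta^2+\theta^2)/\xi\rangle$ agree with the paper, and so does the reduction to the endpoint $b=\tfrac12$ (which in fact suffices for all $b\le\tfrac12$ by monotonicity of the weight, so interpolation in $b$ is unnecessary). The genuine gap is your treatment of the low-frequency region $\eta^2+\theta^2<|\xi|$. Applying Cauchy--Schwarz in $\eta$ against the measure $|\widehat\mu|\,d\eta$ discards the precise dependence of $\widehat f(\theta/\eta)$ on the pair $(\theta,\eta)$, and what remains cannot be controlled by the $\cH^s$-weight: the pointwise ratio you identify, $\langle|\beta-\xi^5|\rangle^{2b}/|\beta-\xi^5|$, blows up as $|\beta-\xi^5|\to 0$ for \emph{every} $b\ge 0$, so the assertion that at $b=\tfrac12$ ``the weight compensates the $1/\sqrt{\cdot}$ singularity'' is simply false. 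Consequently your interpolation scheme collapses, because you never establish the $b=\tfrac12$ endpoint you want to interpolate towards. The $L^2$-boundedness of $Tg(y)=\int f(\eta y)g(\eta)\,d\eta$ used in Proposition~\ref{prop:cHstoHs} gives the case $b=0$ only and does not upgrade to carry the $X^{s,\frac12}$-weight.

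The paper avoids this by \emph{not} applying Cauchy--Schwarz in $\eta$. It keeps the kernel intact via the bound $\bigl|\widehat f(\theta/\eta)/\eta\bigr|\lesssim|\eta|/(\eta^2+\theta^2)$ and then splits into the two regimes $\eta^2+\theta^2<|\xi|$ and $\eta^2+\theta^2\ge|\xi|$. In the second (high-frequency) case a Young-type argument after the change of variable $\rho=\eta^2/|\xi|$ works essentially as you outline. In the first (low-frequency) case, however, the crucial step is the Schur test (Lemma~\ref{lem:schur}) applied to the $(\theta,\eta)$-operator with kernel $K_\xi(\theta,\eta)=\tfrac{|\eta|}{\eta^2+\theta^2}\chi_{\eta^2+\theta^2<|\xi|}$ and Schur weights $p(\theta)=|\theta|^{-\epsilon}$, $q(\eta)=|\eta|^{-\epsilon}$ for small $\epsilon>0$. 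This exploits exactly the joint $(\theta,\eta)$-structure of $\widehat f(\theta/\eta)/\eta$ that your Cauchy--Schwarz step destroys; that is the missing idea in your argument.
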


\begin{proof}
From \eqref{w2}, we compute
$$
\widehat{\mu W_2 h}(\xi,\theta,\tau)=2\int_0^{\infty} \widehat{\mu}(\tau-\xi^5-\tfrac{\eta^2}{\xi})\frac{\widehat{f}(\theta/\eta)}{|\xi|}\widehat{h}(\xi,\xi^5+\tfrac{\eta^2}{\xi})d\eta.
$$
It is enough to prove the statement for $b=\frac12$ and $s=0$, for $s>0$ interpolation yields the desired result. Now
$$
\|\mu W_2 h\|_{X^{0,\frac12 }} = \Big\|\langle \tau-\xi^5+\tfrac{\theta^2}{\xi} \rangle^{ \frac12}  \int_0^{\infty} \widehat{\mu}(\tau-\xi^5-\tfrac{\eta^2}{\xi})\frac{\widehat{f}(\theta/\eta)}{\eta}\widehat{\psi}(\xi,\eta)d\eta \Big\|_{L^2_{\tau,\theta,\xi}},
$$
where $\widehat{\psi}(\xi,\eta)=\frac{2\eta}{|\xi|}\widehat{h\chi_{t>0}}(\xi,\xi^5+\frac{\eta^2}{\xi}) \in L^2$. Using
$$
\langle \tau-\xi^5+\tfrac{\theta^2}{\xi} \rangle \lesssim \langle \tau-\xi^5-\tfrac{\eta^2}{\xi} \rangle \langle \tfrac{\eta^2+\theta^2}{\xi} \rangle
$$
and the Schwarz decay of $\widehat\mu$,   we obtain
$$
\|\mu W_2 h\|_{X^{0,b }}\lesssim \Big\| \int_0^{\infty} \frac{\langle \tfrac{\eta^2+\theta^2}{\xi} \rangle^{\frac12 }}{\la \tau-\xi^5-\frac{\eta^2}{\xi} \ra^2} \frac{|\widehat{f}(\theta/\eta)|}{\eta}|\widehat{\psi}(\xi,\eta)|d\eta\Big\|_{L^2_{\tau,\theta,\xi}}.
$$
We recall that 
$$
\Big|\frac{1}{\eta}\widehat{f}(\theta/\eta)\Big|\lesssim \frac{1}{|\eta|} \la \theta/\eta \ra^{-2 }=\frac{|\eta| }{ \eta^2+\theta^2 }
$$
and thereby reach the bound
$$
\|\mu W_2 h\|_{X^{0,\frac12 }}\lesssim \Big\| \int_0^{\infty} \frac{\langle \tfrac{\eta^2+\theta^2}{\xi} \rangle^{\frac12}}{\la \tau-\xi^5-\frac{\eta^2}{\xi} \ra^2} \frac{|\eta| }{ \eta^2+\theta^2 }|\widehat{\psi}(\xi,\eta)|d\eta\Big\|_{L^2_{\tau,\theta,\xi}}:=I.
$$

We estimate the integral $I$ on the right hand side in two regions separately.

\noindent
\textsc{Case 1:} $\eta^2+\theta^2<|\xi|$.

In this case we find
\begin{multline*}
I\lesssim \Big\| \int_0^{\infty} \frac{|\widehat{\psi}(\xi,\eta) |\chi_{\eta^2+\theta^2<|\xi|}}{\la \tau-\xi^5 \ra^2} \frac{|\eta| }{ \eta^2+\theta^2 }d\eta\Big\|_{L^2_{\tau,\theta,\xi}}\\ \lesssim \Big\| \int_0^{\infty} |\widehat{\psi}(\xi,\eta)| \frac{|\eta| }{ \eta^2+\theta^2 }\chi_{\eta^2+\theta^2<|\xi|}d\eta\Big\|_{L^2_{\theta,\xi}},
\end{multline*}
upon evaluating $L^2_{\tau}$ norm. Define the kernel
$$
K_{\xi}(\theta,\eta):=\frac{|\eta| }{ \eta^2+\theta^2 }\chi_{\eta^2+\theta^2<|\xi|},
$$
and therefore the operator $T_\xi$ on $L^2$ by
$$
(T_\xi\widehat{\psi})( \theta):=\int_0^{\infty} K_{\xi}(\theta,\eta)\widehat{\psi}(\xi,\eta)d\eta.
$$
Now we have $I\lesssim \|T_\xi\widehat{\psi}\|_{L^2_{\theta,\xi}}$. We apply Lemma~\ref{lem:schur} with $q(\eta)=|\eta|^{-\epsilon}$ and $p(\theta)=|\theta|^{-\epsilon}$, for suitable $\epsilon>0$. With this we observe that
$$
\int_0^{\infty} \frac{K_{\xi} (\theta,\eta)}{|\eta|^{\epsilon}}d\eta \leq \int_0^{\infty} \frac{|\eta| }{ \eta^2+\theta^2 } \frac{d\eta}{|\eta|^{\epsilon}}\leq \frac{1}{|\theta|^{\epsilon}} \int_0^{\infty} \frac{|\rho| d\rho}{(1+\rho^2)  |\rho|^{\epsilon}}\lesssim \frac{1}{|\theta|^{\epsilon}},
$$
and that
$$
\int_{-\infty}^{\infty} \frac{K_{\xi} (\theta,\eta)}{|\theta|^{\epsilon}}d\theta\leq \frac{1}{|\eta|^{\epsilon}}\int_{-\infty}^{\infty} \frac{d\rho}{(1+\rho^2)  |\rho|^{\epsilon}}\lesssim \frac{1}{|\eta|^{\epsilon}},
$$
provided $\epsilon <1$. Consequently we arrive at $\|T_\xi\widehat{\psi}\|_{L^2_{\theta }}\lesssim \|\widehat{\psi}\|_{L^2_{ \eta}}$ uniformly in $\xi$.
And hence, $\|T_\xi\widehat{\psi}\|_{L^2_{\theta,\xi}}\les \|\widehat{\psi}\|_{L^2_{ \eta,\xi}}<\infty$.

\noindent
\textsc{Case 2:} $\eta^2+\theta^2\geq|\xi|$. 

In this region the integral $I$ is bounded as
\begin{multline*}
I\lesssim \Big\| \int_0^{\infty} \frac{|\widehat{\psi}(\xi,\eta)| \chi_{\eta^2+\theta^2\geq|\xi|}}{\la \tau-\xi^5-\frac{\eta^2}{\xi} \ra^2} \frac{|\eta|  }{|\xi|^{\frac12}(\eta^2+\theta^2)^{\frac12}}d\eta\Big\|_{L^2_{\tau,\theta,\xi}}\\ \leq
\Big\| \int_0^{\infty} \frac{|\widehat{\psi}(\xi,\eta)|  |\eta| }{\la \tau-\xi^5-\frac{\eta^2}{\xi} \ra^2|\xi|^\frac12} \Big\|\frac{1}{(\eta^2+\theta^2)^{\frac12}}\Big\|_{L^2_{\theta}}d\eta\Big\|_{L^2_{\tau,\xi}}\\
\les \Big\| \int_0^{\infty} \frac{|\widehat{\psi}(\xi,\eta)|  |\eta|^\frac12 }{\la \tau-\xi^5-\frac{\eta^2}{\xi} \ra^2|\xi|^\frac12}  d\eta\Big\|_{L^2_{\tau,\xi}}.
\end{multline*}
We set $\eta^2=|\xi\rho|$ and $\tau^{\prime}=\tau-\xi^5$ to find
$$
I\lesssim \Big\| \int_{-\infty}^{\infty} \frac{\widehat{\psi}(\xi,\sqrt{|\xi\rho|})}{\la \tau^{\prime}-\rho \ra^2} \frac{ |\xi|^{\frac{1}{4}}}{|\rho|^{\frac{1}{4}}}d\rho\Big\|_{L^2_{\tau^{\prime},\xi}},
$$
and then apply Young's inequality to the $L^2_{\tau^{\prime}}$ norm  and reinstate the $\eta$ variable. This brings the following bound
$$
I\lesssim \Big\| \Big(\int_{-\infty}^{\infty}  \frac{ |\xi|^{\frac{1}{2}}}{|\rho |^{ \frac12}} |\widehat{\psi}(\xi,\sqrt{|\xi\rho|})|^2 d\rho\Big)^{\frac{1}{2}}\Big\|_{L^2_{\xi}}\lesssim   \|\widehat{\psi}\|_{L^2_{\xi,\eta}}.
$$ 
\end{proof}

 The following   is a Kato smoothing type estimate for the nonlinear Duhamel term:
\begin{prop}\label{prop:nonlinKato}
For any compactly supported smooth function $\mu$, we have
$$\mu(t) \int_0^t W_{\R^2}(t-t^{\prime})F(x,y,t^{\prime})dt^{\prime}\in C^0_y \cH^s_{x,t}$$
with the norm bound
$$\Big\| \mu(t) \int_0^t W_{\R^2}(t-t^{\prime})F(x,y,t^{\prime})dt^{\prime}\Big\|_{\cH^s_{x,t}}\lesssim \|F\|_{X^{s,-b}},$$
for $0\leq s \leq \frac12 $ and $b<\frac{1}{2}$. For $s>\frac12$ we have the bound
\be\label{weird}\Big\| \mu(t) \int_0^t W_{\R^2}(t-t^{\prime})F(x,y,t^{\prime})dt^{\prime}\Big\|_{\cH^s_{x,t}}   \les \|F\|_{X^{s,-b}} + \|F\|_{X^{\frac12+, \frac{s}2-\frac14,-b_1}},
\ee
where  $b<\frac{1}{2}$, $b_1=\frac34-\frac{s}2$, and
$$
\|F\|_{X^{\frac12+, \frac{s}2-\frac14,-b_1}}=\big\|\la\xi,\theta\ra^{\frac12+} |\xi|^{ \frac{s}2-\frac14} 
\la\lambda-\xi^5+\tfrac{\theta^2}\xi\ra^{-b_1} \widehat F(\xi,\theta,\lambda)\big\|_{L^2_{\xi,\theta,\lambda}}.
$$
\end{prop}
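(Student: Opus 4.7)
Plan: I expand the Duhamel integral in space-time Fourier variables, obtaining
$$
\mu(t)\int_0^tW_{\R^2}(t-s)F(s)\,ds = \mu(t)\int e^{i\xi x+i\eta y}\int \widehat F(\xi,\eta,\tau)\,\frac{e^{it\tau}-e^{itp}}{i(\tau-p)}\,d\tau\,d\xi\,d\eta,
$$
where $p=p(\xi,\eta)=\xi^5-\eta^2/\xi$. I then split the $\tau$-integral at $|\tau-p|=1$ and further decompose each region into terms that are either of the form $\mu_j(t)W_{\R^2}(t)g_j$ (so that Proposition~\ref{freekp2:y=0} applies) or are controllable by Cauchy--Schwarz in the modulation variable.

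On the low modulation region $|\tau-p|\le 1$, I Taylor expand $(e^{it(\tau-p)}-1)/(i(\tau-p))=t\sum_{n\geq 0}(it(\tau-p))^n/(n+1)!$ to obtain the series $\sum_n\mu_n(t)W_{\R^2}(t)g_n$ with $\mu_n(t)=\mu(t)(it)^{n+1}/(n+1)!$ and $\widehat{g_n}(\xi,\eta)=\int_{|\tau-p|\leq 1}\widehat F(\xi,\eta,\tau)(\tau-p)^n\,d\tau$. Cauchy--Schwarz in $\tau$ gives $\|g_n\|_{H^s}\les(2n+1)^{-1/2}\|F\|_{X^{s,-b}}$, and since the proof of Proposition~\ref{freekp2:y=0} depends on $\mu$ only through Schwartz decay of $\widehat{\mu}$, applying it to each $(\mu_n,g_n)$ and summing (the $1/(n+1)!$ dominates any polynomial growth from repeated derivatives of $\widehat\mu$) bounds this contribution by $\|F\|_{X^{s,-b}}$.

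On the high modulation region $|\tau-p|>1$, I separate the numerator into $-e^{itp}$ and $e^{it\tau}$. The $-e^{itp}$ piece gives $-\mu(t)W_{\R^2}(t)g_A$ with $\widehat{g_A}=\int_{|\tau-p|>1}\widehat F/(i(\tau-p))\,d\tau$; Cauchy--Schwarz in $\tau$ with weight $\la\tau-p\ra^{2b-2}\chi_{|\tau-p|>1}$ (integrable for $b<\tfrac12$) yields $\|g_A\|_{H^s}\les\|F\|_{X^{s,-b}}$, after which Proposition~\ref{freekp2:y=0} concludes. The $e^{it\tau}$ piece is the main obstacle: via the substitution $\tau=\lambda+p$ it becomes $\int_{|\lambda|>1}\mu(t)e^{it\lambda}(i\lambda)^{-1}W_{\R^2}(t)g_\lambda\,d\lambda$ with $\widehat{g_\lambda}(\xi,\eta)=\widehat F(\xi,\eta,\lambda+p)$. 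The proof of Proposition~\ref{freekp2:y=0} carries over with $\mu$ replaced by $\mu(t)e^{it\lambda}$, the only change being a translation of $\widehat\mu$, so the slice bound $\les\|g_\lambda\|_{H^s}$ holds uniformly in $\lambda$. Minkowski in $\lambda$ followed by Cauchy--Schwarz with weight $\la\lambda\ra^{b}/|\lambda|$ on $|\lambda|>1$ (whose $L^2$ norm is finite for $b<\tfrac12$), and reversal of the substitution, closes the estimate against $\|F\|_{X^{s,-b}}$.

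For $s>\tfrac12$ the scheme above is insufficient because the $\cH^s$ weight $\la\xi^2+|\xi\beta-\xi^6|\ra^{s/2}|\xi\beta-\xi^6|^{1/4}|\xi|^{-1/2}$ grows too fast in the modulation variable to be accommodated by $X^{s,-b}$ alone. Mimicking the decomposition in the proof of Lemma~\ref{lem:extend}, I write $\la\xi^2+|\xi\beta-\xi^6|\ra^{s}$ as a sum of pieces comparable to $(1+\xi^2)^{s}$ and $|\xi\beta-\xi^6|^{s}=(|\xi||\tau-p|)^{s}$. The first is absorbed by $X^{s,-b}$; for the second, balancing the $(|\xi||\tau-p|)^{s}$ factor between $x$-derivatives and modulation, and choosing $b_1=\tfrac34-\tfrac s2$ so the Cauchy--Schwarz integral in $\tau$ converges, produces exactly the extra weight $|\xi|^{s/2-1/4}\la\lambda-p\ra^{-b_1}$ of the norm $\|F\|_{X^{\tfrac12+,s/2-1/4,-b_1}}$ in \eqref{weird}. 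The final technical hurdle is handling the $|\xi|^{-1/2}$ degeneracy at $\xi=0$, where the $|\xi|^{s/2-1/4}$ smoothing from the extra norm is just enough to compensate and close the estimate throughout the range $s\in(\tfrac12,\tfrac52)$.
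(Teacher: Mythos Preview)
The main gap is in your treatment of the $e^{it\tau}$ piece (what the paper calls $G_2$). You claim that Proposition~\ref{freekp2:y=0} applied with $\mu(t)$ replaced by $\mu(t)e^{it\lambda}$ gives
\[
\|\mu(t)e^{it\lambda}W_{\R^2}(t)g\|_{L^\infty_y\cH^s_{x,t}}\lesssim \|g\|_{H^s}
\]
\emph{uniformly in $\lambda$}. This is false. The $\cH^s$ norm is not invariant under modulations in $t$: in the second form of the norm the weight is $\la\xi^2+|\xi\beta-\xi^6|\ra^{s/2}|\xi\beta-\xi^6|^{1/4}|\xi|^{-1/2}$, which depends on the time frequency $\beta$. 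Multiplying by $e^{it\lambda}$ shifts $\beta\mapsto\beta+\lambda$, so a function whose $\beta$-support sits near $\xi^5$ (as does $\mu(t)W_{\R^2}(t)g$) is pushed into a region where the weight is larger by a factor $\sim\la\lambda\ra^{s/2+1/4}$. Concretely, take $\widehat g(\xi,\theta)$ a smooth bump near $\xi=1$, $|\theta|\leq1$, and $\lambda=-N^2$; then $\|\mu e^{-itN^2}W_{\R^2}g\|_{\cH^s}\gtrsim N^{s+1/2}$ while $\|g\|_{H^s}\approx 1$. In the proof of Proposition~\ref{freekp2:y=0} it is precisely Case~2 ($|\eta|\gg|\theta|$) that breaks: there the decay of $\widehat\mu$ at argument $(\theta^2-\eta^2)/\xi$ is what kills the growth of $\la\xi^2+\eta^2\ra^s$, and after the shift by $\lambda$ that decay no longer lands where it is needed. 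Once the slice bound loses uniformity, your Minkowski--then--Cauchy--Schwarz step over $|\lambda|>1$ cannot recover: you would need the growth exponent to be strictly less than $\tfrac12-b$, which fails for every $s\geq0$.

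The paper avoids this by never separating the $\lambda$ variable. It computes $\widehat{G_2}(\xi,\beta)$ directly and treats the map $\widehat\psi(\xi,\theta,\lambda)\mapsto$ (weighted Fourier side of $G_2$) as an integral operator with kernel
\[
K^2_\xi(\eta,(\theta,\lambda))=\frac{\la\xi^2+\eta^2\ra^{s/2}}{\la\xi^2+\theta^2\ra^{s/2}}\,\frac{|\eta|}{|\xi|}\,\frac{\la\lambda-\xi^5+\theta^2/\xi\ra^{b-1}}{\la\xi^5\pm\eta^2/\xi-\lambda\ra^{M}},
\]
and applies Schur's test with weights chosen differently on the regions $|\eta|\lesssim|\theta|$ and $|\eta|\gg|\theta|$. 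The point is that the two denominators interact: after integrating in $\lambda$ one obtains $\la(\theta^2\mp\eta^2)/\xi\ra^{2b-2}$, and it is this \emph{combined} decay, not that of $\widehat\mu$ alone, that controls the large-$|\eta|$ region for $b<\tfrac12$. For $s>\tfrac12$ the same region is where the $X^{s,-b}$ norm is genuinely insufficient and the additional norm $X^{\frac12+,\frac{s}{2}-\frac14,-b_1}$ enters; the paper handles it by a separate Schur test with $b_1=\tfrac34-\tfrac{s}{2}$. Your final paragraph gestures at the right heuristic for this value of $b_1$, but it is not a proof and inherits the same gap from the $G_2$ step.

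Your reductions for the low-modulation piece and for the $e^{itp}$ high-modulation piece (the paper's $G_1$ and $G_3$) via Proposition~\ref{freekp2:y=0} are correct and, for $G_3$, slightly cleaner than the paper's direct Schur argument.
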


\begin{proof}
We first note that continuity in $y$ follows from the dominated convergence theorem. Secondly, it is enough to show the assertion for 
$$\mu(t) D_0 \int_0^t W_{\R^2}(t-t^{\prime})F(x,y,t^{\prime})dt^{\prime}=\mu(t) \Big[\int_0^t W_{\R^2}(t-t^{\prime})F(x,y,t^{\prime})dt^{\prime}\Big]_{y=0}$$
by translation invariance of $X^{s,b}$ norms in space variable. Now this quantity explicitly is
$$
\mu(t) D_0 \int_0^t W_{\R^2}(t-t^{\prime})F(x,y,t^{\prime})dt^{\prime}=\mu(t) \int_{\R^2} \int_0^t e^{i\xi x} e^{i(\xi^5-\frac{\eta^2}{\xi})(t-t^{\prime})} F(\widehat{\xi},\widehat{\eta},t^{\prime})dt^{\prime} d\xi d\eta,
$$
where
$$
F(\widehat{\xi},\widehat{\eta},t^{\prime})=\int_{\R} e^{i\lambda t^{\prime}} \widehat{F}(\xi,\eta,\lambda)d\lambda.
$$
Using this we evaluate the resulting $t^{\prime}$ integral and find
$$
D_0 \int_0^t W_{\R^2}(t-t^{\prime})F(x,y,t^{\prime})dt^{\prime}= \int_{\R^3} \frac{e^{i\lambda t}-e^{i(\xi^5-\frac{\eta^2}{\xi})t}}{i(\lambda-\xi^5+\frac{\eta^2}{\xi})} e^{i\xi x} \widehat{F}(\xi,\eta,\lambda)d\lambda d\xi d\eta.
$$
We utilize a smooth cut-off function $\phi$ for $[-1,1]$ and its complement $\phi^c=1-\phi$ to break the above integral into three pieces
\begin{multline*}
\mu(t) D_0 \int_0^t W_{\R^2}(t-t^{\prime})F(x,y,t^{\prime})dt^{\prime}\\=\mu(t) \int_{\R^3} \frac{e^{i\lambda t}-e^{i(\xi^5-\frac{\eta^2}{\xi})t}}{i(\lambda-\xi^5+\frac{\eta^2}{\xi})} e^{i\xi x} \phi(\lambda-\xi^5+\tfrac{\eta^2}{\xi})\widehat{F}(\xi,\eta,\lambda)d\lambda d\xi d\eta\\
+\mu(t) \int_{\R^3} \frac{e^{i\lambda t}}{i(\lambda-\xi^5+\frac{\eta^2}{\xi})} e^{i\xi x} \phi^c(\lambda-\xi^5+\tfrac{\eta^2}{\xi}) \widehat{F}(\xi,\eta,\lambda)d\lambda d\xi d\eta\\
-\mu(t) \int_{\R^3} \frac{e^{i(\xi^5-\frac{\eta^2}{\xi})t}}{i(\lambda-\xi^5+\frac{\eta^2}{\xi})} e^{i\xi x} \phi^c(\lambda-\xi^5+\tfrac{\eta^2}{\xi}) \widehat{F}(\xi,\eta,\lambda)d\lambda d\xi d\eta\\ =:G_1(x,t)+G_2(x,t)+G_3(x,t).
\end{multline*}
By Taylor series expansion, we calculate
$$
\frac{e^{i\lambda t}-e^{i(\xi^5-\frac{\eta^2}{\xi})t}}{i(\lambda-\xi^5+\frac{\eta^2}{\xi})}=ie^{i\lambda t} \sum_{k\geq 1} \frac{(-it)^k}{k!} (\lambda-\xi^5+\tfrac{\eta^2}{\xi})^{k-1},
$$
and substitute this series in $G_1$ to obtain
$$
G_1(x,t)= i\mu(t) \int_{\R^3} \sum_{k\geq 1} \frac{(-it)^k}{k!} (\lambda-\xi^5+\tfrac{\eta^2}{\xi})^{k-1}  \phi(\lambda-\xi^5+\tfrac{\eta^2}{\xi}) e^{i\lambda t} e^{i\xi x}\widehat{F}(\xi,\eta,\lambda)d\lambda d\xi d\eta.
$$
We now pass to the Fourier side in both $x$ and $t$ variables
$$
\widehat{G_1}(\xi,\tau)= \int_{\R^2} \sum_{k\geq 1} \frac{(\lambda-\xi^5+\tfrac{\eta^2}{\xi})^{k-1}}{k!} \phi(\lambda-\xi^5+\tfrac{\eta^2}{\xi}) \partial_{\tau}^k \widehat{\mu}(\tau-\lambda) \widehat{F}(\xi,\eta,\lambda)d\lambda d\eta.
$$
Note that 
$$|\tau^M\partial_{\tau}^k \widehat{\mu}(\tau )|=|\widehat{\partial_t^Mt^k \mu(t) }(\tau)|\leq \|\partial_t^Mt^k \mu(t) \|_{L^1}\leq C_M |k|^M, $$
where $M\gg 1$ is fixed. Therefore
\begin{multline*}
\|G_1\|_{\cH^s_{x,t}}=\Big\|\la \xi^2+\eta^2\ra^{\frac{s}{2}}\,\frac{\eta}{\xi}\widehat{G_1}(\xi,\xi^5\pm \tfrac{\eta^2}{\xi})\Big\|_{L^2_{\xi,\eta}}\\
\lesssim \sum_{k=1}^\infty\frac{|k|^{M}\|\lambda^{k-1}\phi(\lambda)\|_{L^\infty}}{k!} \Big\|\la \xi^2+\eta^2\ra^{\frac{s}{2}}\,\frac{\eta}{\xi}\int_{\R^2} \frac{\chi_{|\lambda-\xi^5+\tfrac{\theta^2}{\xi}|<1}}{\la \xi^5\pm \tfrac{\eta^2}{\xi}-\lambda\ra^{M}}|\widehat{F}(\xi,\theta,\lambda) |d\lambda d\theta\Big\|_{L^2_{\xi,\eta}}\\
\les \Big\|\la \xi^2+\eta^2\ra^{\frac{s}{2}}\,\frac{\eta}{\xi}\int_{\R^2} \frac{\chi_{|\lambda-\xi^5+\tfrac{\theta^2}{\xi}|<1}}{\la \tfrac{\theta^2\pm  \eta^2}{\xi}  \ra^{M}}|\widehat{F}(\xi,\theta,\lambda) |d\lambda d\theta\Big\|_{L^2_{\xi,\eta}}:=I_1.
\end{multline*}
We set $\widehat{\psi}(\xi,\theta,\lambda):=\la \xi^2+\theta^2 \ra^{s/2} \la \lambda-\xi^5+\tfrac{\theta^2}{\xi}\ra^{-b} \widehat{F}(\xi,\theta,\lambda) \in L^2_{\xi,\theta,\lambda}$ (here and throughout this proof) and define the kernel
$$
K^1_{\xi}(\eta, (\theta,\lambda)):=\frac{\la \xi^2+\eta^2\ra^{\frac{s}{2}}}{\la \xi^2+\theta^2 \ra^{\frac{s}{2}}}\frac{\eta}{\xi}\frac{1}{\la \tfrac{\theta^2 \pm \eta^2}{\xi} \ra^{M}}\chi_{|\lambda-\xi^5+\tfrac{\theta^2}{\xi}|\lesssim 1},
$$
and consequently arrive at the inequality $I_1\lesssim \|T_1\widehat{\psi}\|_{L^2_{\xi,\eta}}$ where
$$
T_1\widehat{\psi}(\xi,\eta):=\int_{\R^2} K^1_{\xi}(\eta, (\theta,\lambda))\widehat{\psi}(\xi,\theta,\lambda)d\theta d\lambda.
$$
Note that the $\la \lambda-\xi^5+\tfrac{\theta^2}{\xi}\ra^{b}$ multiplier in $K^1_{\xi}$ is ignored as it is $\approx 1$ on the support of $\phi$.

We will prove $\|T_1\widehat{\psi}\|_{L^2_{\xi,\eta}}\lesssim \|\widehat{\psi}\|_{L^2_{\xi,\theta,\lambda}}$ by showing 
$$
\int_{\R} |K^1_{\xi}(\eta, (\theta,\lambda))| d\eta \lesssim 1\quad\text{and}\quad \int_{\R^2} |K^1_{\xi}(\eta, (\theta,\lambda))| d\theta d\lambda \lesssim 1
$$
uniformly in $\xi$ using Lemma~\ref{lem:schur}.  We do this in two regions.

\noindent
\textsc{Case 1:} $|\eta| \lesssim |\theta|$.

In this region $\la \xi^2+\eta^2\ra \lesssim \la \xi^2+\theta^2 \ra$ is valid. Using this and the change of variable $\rho=\frac{\eta^2}{\xi}$ we get
$$
\int\limits_{|\eta| \lesssim |\theta|} |K^1_{\xi}(\eta, (\theta,\lambda))| d\eta \lesssim \int_{\R} \frac{d\rho}{\la \frac{\theta^2}{\xi}\pm \rho \ra^M}\lesssim 1.
$$ 
For the other integral, we have
$$
\int\limits_{|\theta| \gtrsim |\eta|} \int_{\R} |K^1_{\xi}(\eta, (\theta,\lambda))| d\lambda d\theta \lesssim \int\limits_{|\theta| \gtrsim |\eta|} \int\limits_{|\lambda-\xi^5+\tfrac{\theta^2}{\xi}|\lesssim 1} \frac{|\theta|}{|\xi|} \frac{1}{\la \tfrac{\theta^2 \pm \eta^2}{\xi} \ra^{M}} d\lambda d\theta \lesssim \int_{\R} \frac{d\rho}{\la \rho \pm \frac{\eta^2}{\xi} \ra^M}\lesssim 1,
$$
where now $\rho=\frac{\theta^2}{\xi}$.

\noindent
\textsc{Case 2:} $|\eta| \gg |\theta|$.

In this case we observe $\la \tfrac{\theta^2 \pm \eta^2}{\xi} \ra \approx \la \tfrac{\eta^2}{\xi} \ra$. Then implement the change of variable $\rho=\frac{\eta^2}{\xi}$ and hence obtain
$$
\int\limits_{|\eta| \gg |\theta|} |K^1_{\xi}(\eta, (\theta,\lambda))| d\eta \lesssim \int\limits_{|\rho| \gg \frac{\theta^2}{|\xi|}} \frac{\la \xi^2+\xi\rho\ra^{\frac{s}{2}}}{\la \xi^2+\theta^2 \ra^{\frac{s}{2}}} \frac{d\rho}{\la \rho \ra^M} \lesssim \int_{\R} \frac{\la \xi \ra^s + \la \xi \ra^{\frac{s}{2}} \la \rho \ra^{\frac{s}{2}}}{\la \xi \ra^s \la \rho \ra^M} d\rho \lesssim 1.
$$
We handle the other integral by considering the cases $|\xi|<1$ and $|\xi|>1$ seperately. 
In the latter case, we have  (for $0\leq s<1$)
\begin{multline}\label{eq:K11}
\int\limits_{|\theta| \ll |\eta|} \int_{\R} |K^1_{\xi}(\eta, (\theta,\lambda))| d\lambda d\theta \lesssim \int\limits_{|\theta| \ll |\eta|} \frac{( \xi^2+\eta^2)^{\frac{s}{2}}}{( \xi^2+\theta^2 )^{\frac{s}{2}}}\frac{|\eta|}{|\xi|}\frac{d\theta}{\la \frac{\eta^2}{\xi}\ra^M}\\
  =\int\limits_{|\rho| \ll |\beta| }\frac{(1+\beta^2)^{\frac{s}{2}}}{(1+\rho^2)^{\frac{s}{2}}} |\beta\xi| \frac{d\rho}{\la \beta^2 \xi \ra^M}  \les \frac { \beta^2|\xi| }{\la \beta^2 \xi \ra^M} \les 1.
\end{multline}
In the former case, we have (for $0\leq s<1$)
\be\label{eq:K12}
\int\limits_{|\theta| \ll |\eta|} \int_{\R} |K^1_{\xi}(\eta, (\theta,\lambda))| d\lambda d\theta \lesssim \int\limits_{|\theta| \ll |\eta|} \frac{\la \eta\ra^s}{\la\theta\ra^s  }\frac{|\eta|}{|\xi|}\frac{d\theta}{\la \frac{\eta^2}{\xi}\ra^M}\les \frac{\frac{\eta^2}{|\xi|} }{\la \frac{\eta^2}{\xi}\ra^M}\les 1.
\ee
This proves $\|G_1\|_{\cH^s_{x,t}}\lesssim \|\widehat{\psi}\|_{L^2_{\xi,\theta,\lambda}}=\|F\|_{X^{s,-b}}$.
A similar argument extends this to $s\geq 1$ provided that we choose $M$ suitably large.

We now pass to proving $\|G_2\|_{\cH^s_{x,t}}\lesssim \|F\|_{X^{s,-b}}$. From above we have 
$$
\widehat{G_2}(\xi,\eta)\approx \int_{\R^2} \frac{\widehat{\mu}(\eta-\lambda) \widehat{F}(\xi,\theta,\lambda)}{i(\lambda-\xi^5+\frac{\theta^2}{\xi})}\phi^c(\lambda-\xi^5+\tfrac{\theta^2}{\xi})d\lambda d\theta,
$$ 
and using $|\lambda-\xi^5+\frac{\theta^2}{\xi}|\geq 1$ on the support of $\phi^c(\lambda-\xi^5+\frac{\theta^2}{\xi})$ we deduce
$$
|\widehat{G_2}(\xi,\eta)|\lesssim \int_{\R^2} \frac{|\widehat{\mu}(\eta-\lambda)|  |\widehat{F}(\xi,\theta,\lambda)|}{\la \lambda-\xi^5+\frac{\theta^2}{\xi}\ra} d\lambda d\theta.
$$
To prove the assertion we write
\begin{multline*}
\|G_2\|_{\cH^s_{x,t}}\lesssim \Big\|\la \xi^2+\eta^2\ra^{\frac{s}{2}}\,\frac{|\eta|}{|\xi|}\widehat{G_2}(\xi,\xi^5\pm \tfrac{\eta^2}{\xi})\Big\|_{L^2_{\xi,\eta}}\\
\lesssim \Big\|\frac{|\eta|\la \xi^2+\eta^2\ra^{\frac{s}{2}}}{|\xi|}\int_{\R^2} \frac{ |\widehat{F}(\xi,\theta,\lambda)| }{\la \lambda-\xi^5+\frac{\theta^2}{\xi}\ra \la \xi^5\pm \tfrac{\eta^2}{\xi}-\lambda\ra^M}\, d\lambda d\theta\Big\|_{L^2_{\xi,\eta}},
\end{multline*}
where we used the usual bound for $\widehat{\mu}$. 
We now concentrate on the case $0\leq s\leq \frac12$. Defining $\psi$ as above, we have 
$$
\|G_2\|_{\cH^s_{x,t}}\les 
\Big\|\int_{\R^2} \frac{\la \xi^2+\eta^2\ra^{\frac{s}{2}}}{\la \xi^2+\theta^2\ra^{\frac{s}{2}}}\,\frac{|\eta|}{|\xi|} \frac{\la \lambda-\xi^5+\frac{\theta^2}{\xi}\ra^{b-1}}{\la \xi^5\pm \tfrac{\eta^2}{\xi}-\lambda\ra^M} |\widehat{\psi}(\xi,\theta,\lambda)| d\lambda d\theta\Big\|_{L^2_{\xi,\eta}}:=I_2,
$$
We therefore work with the kernel
$$
K^2_{\xi}(\eta,(\theta,\lambda)):=\frac{\la \xi^2+\eta^2\ra^{\frac{s}{2}}}{\la \xi^2+\theta^2\ra^{\frac{s}{2}}}\,\frac{|\eta|}{|\xi|} \frac{\la \lambda-\xi^5+\frac{\theta^2}{\xi}\ra^{b-1}}{\la \xi^5\pm \tfrac{\eta^2}{\xi}-\lambda\ra^M}
$$
as above.

\noindent
\textsc{Case 1:} $|\eta|\lesssim |\theta|$ or $|\eta|\les \la \xi\ra$.

We apply Lemma~\ref{lem:schur} with $q(\eta)=1$ and $p(\theta,\lambda)=\la \lambda-\xi^5+\tfrac{\theta^2}{\xi}\ra^{b-1}$ in this region. By $\la \xi^2+\eta^2\ra \lesssim \la \xi^2+\theta^2 \ra$ we have
\begin{multline*} 
\int\limits_{|\eta| \lesssim |\theta|} |K^2_{\xi}(\eta, (\theta,\lambda))| d\eta \lesssim \int\limits_{|\eta| \lesssim |\theta|} \frac{|\eta|}{|\xi|} \frac{\la \lambda-\xi^5+\frac{\theta^2}{\xi}\ra^{b-1}}{\la \xi^5\pm \tfrac{\eta^2}{\xi}-\lambda\ra^M} d\eta \\ \lesssim \int_{\R} \frac{\la \lambda-\xi^5+\frac{\theta^2}{\xi}\ra^{b-1}}{\la \xi^5\pm \rho-\lambda\ra^M} d\rho \lesssim \la \lambda-\xi^5+\tfrac{\theta^2}{\xi}\ra^{b-1}=p(\theta,\lambda),
\end{multline*}
where $\rho=\frac{\eta^2}{\xi}$. We also have
\begin{multline*} 
\int\limits_{|\theta| \gtrsim |\eta|} \int_{\R} |K^2_{\xi}(\eta, (\theta,\lambda))| p(\theta,\lambda) d\lambda d\theta \lesssim \int\limits_{|\theta| \gtrsim |\eta|} \int_{\R} \frac{|\eta|}{|\xi|} \frac{\la \lambda-\xi^5+\frac{\theta^2}{\xi}\ra^{2b-2}}{\la \xi^5\pm \tfrac{\eta^2}{\xi}-\lambda\ra^M}  d\lambda d\theta \\ \lesssim \int_{\R} \frac{|\theta|}{|\xi|} \frac{d\theta}{\la \frac{\theta^2\mp \eta^2}{\xi}\ra^{2-2b}} \lesssim \int_{\R} \frac{d\rho}{\la \rho \mp \frac{\eta^2}{\xi}\ra^{2-2b}}\lesssim 1,
\end{multline*}
where $\rho=\frac{\theta^2}{\xi}$ and provided that $b<\frac{1}{2}$.

\noindent
\textsc{Case 2:} $|\eta|\gg |\theta|$ and $|\eta|\gg \la \xi\ra$.

For this region $q(\eta)=| \eta |^{-s}$ and $p(\theta,\lambda)=\la \lambda-\xi^5+\tfrac{\theta^2}{\xi}\ra^{b-1} \la \xi^2+\theta^2\ra^{-\frac{s}{2}}$ are the suitable   functions for Lemma~\ref{lem:schur}. Upon this setting we get
\begin{multline*}
\int\limits_{|\eta| \gg |\theta|} |K^2_{\xi}(\eta, (\theta,\lambda))| q(\eta) d\eta \lesssim \int\limits_{|\eta| \gg |\theta|} \frac{|\eta|}{|\xi|} \frac{1}{\la \xi^2+\theta^2\ra^{\frac{s}{2}}} \frac{\la \lambda-\xi^5+\frac{\theta^2}{\xi}\ra^{b-1}}{\la \xi^5\pm \tfrac{\eta^2}{\xi}-\lambda\ra^M} d\eta \\ \lesssim \int_{\R}  \frac{\la \lambda-\xi^5+\frac{\theta^2}{\xi}\ra^{b-1}}{\la \xi^2+\theta^2\ra^{\frac{s}{2}}} \frac{d\rho}{\la \xi^5\pm \rho-\lambda\ra^M}
\lesssim   \frac{\la \lambda-\xi^5+\frac{\theta^2}{\xi}\ra^{b-1}}{\la \xi^2+\theta^2\ra^{\frac{s}{2}}}=p(\theta,\lambda),
\end{multline*}
where we implemented the change of variable $\rho=\frac{\eta^2}{\xi}$. For the $(\lambda,\theta)$ integral we obtain
\begin{multline*}
\int\limits_{|\theta| \ll |\eta|} \int_{\R} |K^2_{\xi}(\eta, (\theta,\lambda))| p(\theta,\lambda) d\lambda d\theta \lesssim \int\limits_{|\theta| \ll |\eta|} \int_{\R} \frac{1}{\la \xi^2+\theta^2\ra^s} \frac{|\eta|^{1+s}}{|\xi|} \frac{\la \lambda-\xi^5+\frac{\theta^2}{\xi}\ra^{2b-2}}{\la \xi^5\pm \tfrac{\eta^2}{\xi}-\lambda\ra^M} d\lambda d\theta\\
\lesssim \int\limits_{|\theta| \ll |\eta|} \frac{1}{\la \xi^2+\theta^2\ra^s} \frac{|\eta|^{1+s}}{|\xi|} \frac{d\theta}{\la \frac{\theta^2\mp \eta^2}{\xi}\ra^{2-2b}} \lesssim | \eta |^{-s} \int\limits_{|\theta| \ll |\eta|} \frac{1}{\la \xi^2+\theta^2\ra^s} \frac{|\eta|^{1+2s}}{|\xi|} \frac{d\theta}{\la \frac{\eta^2}{\xi}\ra^{2-2b}}\\ \lesssim |  \eta |^{-s}=q(\eta).
\end{multline*}
The last inequality follows by considering the cases $|\xi|<1$ and $|\xi|>1$ as in \eqref{eq:K11} and \eqref{eq:K12} provided that   $0\leq s\leq \frac12$ and $2-2b>1$. 

For $s>\frac12$, the proof is the same for case 1. It remains to consider the case  when $|\eta|\gg|\theta|$ and $|\eta|\gg \la \xi\ra $, which will contribute the second summand on the right hand side of \eqref{weird}. We estimate the contribution of this region to $\|G_2\|_{\cH^s_{x,t}}$ by 
$$
  \Big\|\frac{|\eta|^{1+s}}{|\xi|}\int_{|\theta|\ll|\eta|} \frac{ |\widehat{F}(\xi,\theta,\lambda)| }{\la \lambda-\xi^5+\frac{\theta^2}{\xi}\ra \la \xi^5\pm \tfrac{\eta^2}{\xi}-\lambda\ra^M}\, d\lambda d\theta\Big\|_{L^2_{\xi,\eta}},
$$
It suffices to prove that the operator with the kernel 
$$
\widetilde{K}^2_\xi(\eta,(\theta,\lambda)):=\frac{|\eta|^{1+s}}{|\xi|^{\frac{s}2+\frac34}\la \theta^2+\xi^2\ra^{\frac14+}} \frac{ \chi_{|\theta|\ll|\eta|} }{\la \lambda-\xi^5+\frac{\theta^2}{\xi}\ra^{1-b_1} \la \xi^5\pm \tfrac{\eta^2}{\xi}-\lambda\ra^M}
$$
is bounded from $L^2_\eta$ to $L^2_{\theta,\lambda}$ uniformly in $\xi$. Let $q(\eta)=|\eta|^{-s}$ and 
$$p(\theta,\lambda)=\frac{1}{|\xi|^{\frac{s}2-\frac14}\la \theta^2+\xi^2\ra^{\frac14+} \la \lambda-\xi^5+\frac{\theta^2}{\xi}\ra^{1-b_1} }.$$
It is easy to see that $\int \widetilde{K}^2_\xi(\eta,(\theta,\lambda)) q(\eta) d\eta \les p(\theta,\lambda) $ by change of variable $\rho=\frac{\eta^2}{\xi}.$ 
We also have 
\begin{multline*}
\frac1{q(\eta)} \int \widetilde{K}^2_\xi(\eta,(\theta,\lambda)) p(\theta,\lambda)  d\theta d\lambda  
\\ =   \int  \frac{|\eta|^{1+2s}}{|\xi|^{s+\frac12}\la \theta^2+\xi^2\ra^{\frac12+}} \frac{ \chi_{|\theta|\ll|\eta|} }{\la \lambda-\xi^5+\frac{\theta^2}{\xi}\ra^{2-2b_1} \la \xi^5\pm \tfrac{\eta^2}{\xi}-\lambda\ra^M} d\theta d\lambda \\
\les \int  \frac{|\eta|^{1+2s}}{|\xi|^{s+\frac12}\la \theta^2+\xi^2\ra^{\frac12+}  (\frac{\eta^2}{\xi})^{2-2b_1}  } d\theta \les \Big(\frac{\eta^2}{\xi}\Big)^{s+2b_1-\frac32}\les 1
\end{multline*}
provided that $b_1\leq\frac34-\frac{s}2$.
 
For $G_3$ we note that
$$
\widehat{G_3}(\xi,\eta)=\int_{\R^2} \frac{\widehat{\mu}(\eta-\xi^5+\frac{\theta^2}{\xi}) \widehat{F}(\xi,\theta,\lambda)}{i(\lambda-\xi^5+\frac{\theta^2}{\xi})}\phi^c(\lambda-\xi^5+\tfrac{\theta^2}{\xi})d\lambda d\theta,
$$ 
and hence as in $G_2$ we obtain
$$
|\widehat{G_3}(\xi,\xi^5\pm \tfrac{\eta^2}{\xi})|\lesssim \int_{\R^2} \frac{|\widehat{\mu}(\frac{\theta^2\pm \eta^2}{\xi})|  |\widehat{F}(\xi,\theta,\lambda)|}{\la \lambda-\xi^5+\frac{\theta^2}{\xi} \ra} d\lambda d\theta \lesssim \int_{\R^2} \frac{\la \lambda-\xi^5+\frac{\theta^2}{\xi} \ra^{b-1}}{\la \frac{\theta^2\pm \eta^2}{\xi} \ra^M}  \frac{|\widehat{\psi}(\xi,\theta,\lambda)|}{\la \xi^2+\theta^2 \ra^{\frac{s}{2}}} d\lambda d\theta.
$$
We then establish 
$$
\|G_3\|_{\cH^s_{x,t}}\lesssim \Big\|\int_{\R^2} \frac{\la \xi^2+\eta^2\ra^{\frac{s}{2}}}{\la \xi^2+\theta^2\ra^{\frac{s}{2}}}\,\frac{|\eta|}{|\xi|} \frac{\la \lambda-\xi^5+\frac{\theta^2}{\xi}\ra^{b-1}}{\la \frac{\theta^2\pm \eta^2}{\xi} \ra^M} |\widehat{\psi}(\xi,\theta,\lambda)|d\lambda d\theta\Big\|_{L^2_{\xi,\eta}}:=\|T_3\widehat{\psi}\|_{L^2_{\xi,\eta}},
$$
where $T_3$ is an operator on $L^2$ with the kernel
$$
K^3_{\xi}(\eta,(\theta,\lambda)):=\frac{\la \xi^2+\eta^2\ra^{\frac{s}{2}}}{\la \xi^2+\theta^2\ra^{\frac{s}{2}}}\,\frac{|\eta|}{|\xi|} \frac{\la \lambda-\xi^5+\frac{\theta^2}{\xi}\ra^{b-1}}{\la \frac{\theta^2\pm \eta^2}{\xi}\ra^M}.
$$

\noindent
\textsc{Case 1:} $|\eta|\lesssim |\theta|$.

We  use Lemma~\ref{lem:schur} with the functions $q(\eta)=1$ and $p(\theta,\lambda)=\la \lambda-\xi^5+\tfrac{\theta^2}{\xi}\ra^{b-1}$ in this region. By $\la \xi^2+\eta^2\ra \lesssim \la \xi^2+\theta^2 \ra$ we have
\begin{multline*} 
\int\limits_{|\eta| \lesssim |\theta|} |K^3_{\xi}(\eta, (\theta,\lambda))| d\eta \lesssim \int\limits_{|\eta| \lesssim |\theta|} \frac{|\eta|}{|\xi|} \frac{\la \lambda-\xi^5+\frac{\theta^2}{\xi}\ra^{b-1}}{\la \frac{\theta^2\pm \eta^2}{\xi}\ra^M} d\eta \\ \lesssim \int_{\R} \frac{\la \lambda-\xi^5+\frac{\theta^2}{\xi}\ra^{b-1}}{\la  \rho\pm\frac{\theta^2}{\xi}\ra^M} d\rho \lesssim \la \lambda-\xi^5+\tfrac{\theta^2}{\xi}\ra^{b-1}=p(\theta,\lambda),
\end{multline*}
where $\rho=\frac{\eta^2}{\xi}$. We also have
\begin{multline*} 
\int\limits_{|\theta| \gtrsim |\eta|} \int_{\R} |K^3_{\xi}(\eta, (\theta,\lambda))| p(\theta,\lambda) d\lambda d\theta \lesssim \int\limits_{|\theta| \gtrsim |\eta|} \int_{\R} \frac{|\eta|}{|\xi|} \frac{\la \lambda-\xi^5+\frac{\theta^2}{\xi}\ra^{2b-2}}{\la \frac{\theta^2\pm \eta^2}{\xi}\ra^M}  d\lambda d\theta \\ \lesssim \int_{\R} \frac{|\theta|}{|\xi|} \frac{d\theta}{\la \frac{\theta^2\mp \eta^2}{\xi}\ra^{M}} \lesssim \int_{\R} \frac{d\rho}{\la \rho \mp \frac{\eta^2}{\xi}\ra^{M}}\lesssim 1,
\end{multline*}
where $\rho=\frac{\theta^2}{\xi}$ and provided that $b<\frac{1}{2}$.

\noindent
\textsc{Case 2:} $|\eta|\gg |\theta|$.

For this region $q(\eta)=\la \xi^2+\eta^2\ra^{-\frac{s}{2}}$ and $p(\theta,\lambda)=\la \lambda-\xi^5+\tfrac{\theta^2}{\xi}\ra^{b-1} \la \xi^2+\theta^2\ra^{-\frac{s}{2}}$ are the suitable Schur functions. Upon this setting we get
\begin{multline*}
\int\limits_{|\eta| \gg |\theta|} |K^3_{\xi}(\eta, (\theta,\lambda))| q(\eta) d\eta \lesssim \int\limits_{|\eta| \gg |\theta|} \frac{|\eta|}{|\xi|} \frac{1}{\la \xi^2+\theta^2\ra^{\frac{s}{2}}} \frac{\la \lambda-\xi^5+\frac{\theta^2}{\xi}\ra^{b-1}}{\la \frac{\theta^2\pm \eta^2}{\xi}\ra^M} d\eta \\ \lesssim \int_{\R}  \frac{\la \lambda-\xi^5+\frac{\theta^2}{\xi}\ra^{b-1}}{\la \xi^2+\theta^2\ra^{\frac{s}{2}}} \frac{d\rho}{\la   \rho\pm\frac{\theta^2}{\xi} \ra^M}
\lesssim   \frac{\la \lambda-\xi^5+\frac{\theta^2}{\xi}\ra^{b-1}}{\la \xi^2+\theta^2\ra^{\frac{s}{2}}}=p(\theta,\lambda),
\end{multline*}
where we implemented the change of variable $\rho=\frac{\eta^2}{\xi}$. For the $(\lambda,\theta)$ integral we obtain
\begin{multline*}
\int\limits_{|\theta| \ll |\eta|} \int_{\R} |K^3_{\xi}(\eta, (\theta,\lambda))| p(\theta,\lambda) d\lambda d\theta \lesssim \int\limits_{|\theta| \ll |\eta|} \int_{\R} \frac{\la \xi^2+\eta^2\ra^{\frac{s}{2}}}{\la \xi^2+\theta^2\ra^s} \frac{|\eta|}{|\xi|} \frac{\la \lambda-\xi^5+\frac{\theta^2}{\xi}\ra^{2b-2}}{\la \frac{\theta^2\pm \eta^2}{\xi}\ra^M} d\lambda d\theta\\
\lesssim \int\limits_{|\theta| \ll |\eta|} \frac{\la \xi^2+\eta^2\ra^{\frac{s}{2}}}{\la \xi^2+\theta^2\ra^s} \frac{|\eta|}{|\xi|} \frac{d\theta}{\la \frac{\theta^2\mp \eta^2}{\xi}\ra^{M}} \lesssim \la \xi^2+\eta^2\ra^{-\frac{s}{2}} \int\limits_{|\theta| \ll |\eta|} \frac{\la \xi^2+\eta^2\ra^s}{\la \xi^2+\theta^2\ra^s} \frac{|\eta|}{|\xi|} \frac{d\theta}{\la \frac{\eta^2}{\xi}\ra^{M}}\\ \lesssim \la \xi^2+\eta^2\ra^{-\frac{s}{2}}=q(\eta).
\end{multline*}
The last inequality follows by considering the cases $|\xi|<1$ and $|\xi|>1$ as in \eqref{eq:K11} and \eqref{eq:K12} provided that   $0\leq s $ and $M$ is sufficiently large. 
\end{proof}

\section{Nonlinear Estimates}\label{sec:nonlin}  We now establish estimates for the nonlinear term  in \eqref{eq:duhamel}.   
\begin{theorem}\label{thm:nonlin1}
Fix $s>0$ and $ a<\min(\frac{2s}3,\frac13)$. If $b<\tfrac12$ is sufficiently close to $\tfrac12$, then
$$
\|(u^2)_x\|_{X^{s+a,  -b }}\les \|u\|_{X^{s,b}}^2.
$$
\end{theorem}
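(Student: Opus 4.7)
The plan is to dualize and reduce the bilinear bound to a trilinear convolution estimate controlled by a single multiplier. Setting
$$F_j(\xi_j,\eta_j,\tau_j)=\la\xi_j,\eta_j\ra^s\la\sigma_j\ra^b|\widehat u(\xi_j,\eta_j,\tau_j)|, \quad j=1,2,\qquad G(\xi,\eta,\tau)=\la\xi,\eta\ra^{-s-a}\la\sigma\ra^b|\widehat v(\xi,\eta,\tau)|,$$
with $\sigma_\ast = \tau_\ast-\xi_\ast^5+\eta_\ast^2/\xi_\ast$ and $\xi_2=\xi-\xi_1$, $\eta_2=\eta-\eta_1$, $\tau_2=\tau-\tau_1$, the claim is equivalent to
$$
\iiint \frac{|\xi|\la\xi,\eta\ra^{s+a}\,F_1 F_2 G}{\la\xi_1,\eta_1\ra^s\la\xi_2,\eta_2\ra^s\la\sigma\ra^b\la\sigma_1\ra^b\la\sigma_2\ra^b}\,d\xi_1\,d\eta_1\,d\tau_1\,d\xi\,d\eta\,d\tau \les \|F_1\|_2\|F_2\|_2\|G\|_2.
$$
The algebraic backbone is the resonance identity
$$
\sigma-\sigma_1-\sigma_2 = 5\xi_1\xi_2\xi\bigl(\xi_1^2+\xi_1\xi_2+\xi_2^2\bigr)+\frac{(\xi_2\eta_1-\xi_1\eta_2)^2}{\xi_1\xi_2\xi},
$$
which forces $\max(|\sigma|,|\sigma_1|,|\sigma_2|)$ to be comparable to the absolute value of the right-hand side.

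Next, I would perform a dyadic decomposition in $|\xi|,|\xi_1|,|\xi_2|$ and in the corresponding $\eta$-variables. Assuming by symmetry $|\xi_1|\geq|\xi_2|$, the two principal regimes are \emph{high-high} ($|\xi_1|\sim|\xi_2|$) and \emph{high-low} ($|\xi_1|\sim|\xi|\gg|\xi_2|$), each further split into three sub-cases according to which of $|\sigma|,|\sigma_1|,|\sigma_2|$ achieves the maximum supplied by the resonance identity. In every sub-case the dominant modulation weight is pulled outside the integral and estimated from below by the resonance, after which Cauchy--Schwarz is applied against the two remaining $L^2$ functions. This reduces matters to uniform bounds of the form
$$
\sup_{\xi,\eta,\tau}\frac{\xi^2\la\xi,\eta\ra^{2(s+a)}}{\la\sigma\ra^{2b}}\int\frac{d\xi_1\,d\eta_1\,d\tau_1}{\la\xi_1,\eta_1\ra^{2s}\la\xi_2,\eta_2\ra^{2s}\la\sigma_1\ra^{2b}\la\sigma_2\ra^{2b}} \les 1,
$$
together with two symmetric statements where the supremum is taken in $(\xi_j,\eta_j,\tau_j)$. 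The $\tau_1$ integration is carried out first via the standard convolution bound $\int\la\sigma_1\ra^{-2b}\la\sigma_2\ra^{-2b}\,d\tau_1 \les \la\sigma_1+\sigma_2\ra^{1-4b}$, valid for $b\in(\tfrac14,\tfrac12)$.

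The main obstacle is the residual two-dimensional $(\xi_1,\eta_1)$ integral, which I would handle through the change of variable $\rho=(\xi_2\eta_1-\xi_1\eta_2)/\sqrt{|\xi_1\xi_2\xi|}$ in the $\eta_1$ direction; this decouples the quadratic-in-$\eta$ part of the resonance and reduces matters to one-dimensional integrals controlled by the calculus lemmas in Section~6. The two constraints on $a$ arise in the extremal configurations: in the high-high regime, the dispersive part of the resonance supplies only a factor of $|\xi_1|^{5b}$ into the largest modulation, which must absorb the derivative loss $|\xi|\la\xi,\eta\ra^{s+a}$ relative to the two Sobolev weights, forcing $a<\tfrac13$ as $b\uparrow\tfrac12$; in the high-low regime, the unfavourable factor $\la\xi_2,\eta_2\ra^{-s}$ together with the scaling of the resonance in $\xi_2$ yields the sharper condition $a<\tfrac{2s}{3}$ when $s$ is small. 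All other cases, including those where the quadratic term $(\xi_2\eta_1-\xi_1\eta_2)^2/(\xi_1\xi_2\xi)$ dominates the resonance, impose strictly weaker restrictions on $a$, and assembling them gives the bilinear estimate for $b$ sufficiently close to $\tfrac12$.
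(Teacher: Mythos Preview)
Your strategy coincides with the paper's: dualize, use the resonance identity (both summands have a common sign, so the maximal modulation controls their sum), split according to which of $\la\sigma\ra,\la\sigma_1\ra,\la\sigma_2\ra$ is largest, reduce by Cauchy--Schwarz to a supremum integral, and linearize the quadratic-in-$\eta$ part via the change of variable you describe (the paper uses its square, $\eta=\rho^2$). The paper does not dyadically decompose but works directly in the nonresonant region $|\xi_1|,|\xi_2|\gtrsim 1$ and then the resonant region $|\xi_1|\lesssim 1$; this is cosmetic.

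Your attribution of the sharp thresholds is, however, reversed. The bound $a<\tfrac13$ does \emph{not} come from high--high; in the paper it arises in the high--low configuration $|\xi|\gg|\xi_1|\gtrsim 1$ with the \emph{low}-frequency input carrying the dominant modulation and with the transverse output frequency large ($|\eta|\gg\la\xi\ra$). There, after integrating out the $\rho$-variable, one still has a one-dimensional $\xi$-integral which is closed by a \emph{second} change of variable $\rho'=\xi^5-(\xi-\xi_1)^5$, and the resulting exponent balance is exactly what forces $a<\tfrac13$. The constraint $a<\tfrac{2s}{3}$ comes from the genuinely resonant case $|\xi_1|<1$ with large transverse output frequency---again high--low. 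High--high interactions are comparatively benign and yield only $a<\min(s,\tfrac12)$ or looser. Your outline would lead to the theorem once the case analysis is executed in full, but you will need that second ($\xi$-direction) substitution, which your sketch does not mention, to close the critical high--low case.
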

\begin{proof}
By duality, see e.g. \cite{etnls}, it suffices to prove that  
\be\label{nonlinmainineq}
\int_{\R^6} \frac{|\xi| \la\xi,\theta\ra^{s+a} |f(\sigma)f_1(\sigma_1)f_2(\sigma -\sigma_1)|}{\la\xi_1,\theta_1\ra^s \la \xi-\xi_1,\theta-\theta_1\ra^s \la \tau\ra^b\la\tau_1\ra^b \la \tau_2\ra^b} d\sigma d\sigma_1\les \|f\|_{L^2}\|f_1\|_{L^2}\|f_2\|_{L^2},
\ee
where $\sigma=(\xi,\theta,\lambda),$ $d\sigma=d\xi d\theta d\lambda$,  similarly for $\sigma_1$ and $d\sigma_1$. Moreover
  $\tau:=\lambda-\xi^5+\frac{\theta^2}{\xi}$,  similarly for $ \tau_1$,  and $\tau_2=\lambda-\lambda_1-(\xi-\xi_1)^5+\frac{(\theta-\theta_1)^2}{\xi-\xi_1}$.

We have the identity 
$$
\tau-\tau_1-\tau_2= \xi_1^5+(\xi-\xi_1)^5-\xi^5+\frac{\theta^2}{\xi}  -\frac{\theta_1^2}{\xi_1} -\frac{(\theta-\theta_1)^2}{\xi-\xi_1}
$$
 $$=
-5\xi\xi_1(\xi-\xi_1)(\xi^2-\xi\xi_1+\xi_1^2) -\frac{(\theta\xi_1-\theta_1\xi)^2}{\xi\xi_1(\xi-\xi_1)}.
$$
Noting that both summands have the same sign, we have 
$$
|\tau-\tau_1-\tau_2|\gtrsim |\xi\xi_1(\xi-\xi_1)|  (\xi^2+\xi_1^2) + \frac{(\theta\xi_1-\theta_1\xi)^2}{|\xi\xi_1(\xi-\xi_1)|}=:M.
$$
We will prove the inequality \eqref{nonlinmainineq} by considering various regions for the parameters involved. In the integral signs we will omit the domain of integration since it will be clear from the context.  

We first consider the nonresonant region $|\xi-\xi_1|,|\xi_1|\gtrsim 1$. By symmetry, it suffices to  consider the following  cases: $\la\tau\ra\gtrsim \la M\ra$ and $\la \tau_1\ra\gtrsim \la M\ra$.

\noindent
\textsc{Case 1:} $\la\tau\ra\gtrsim \la M\ra$. 

By the Cauchy--Schwarz inequality and the convolution structure it suffices to obtain the bound below, see e.g. \cite{etnls} 
$$
I:=\sup_{\xi,\theta,\lambda} \int  \frac{\xi^2 \la\xi,\theta\ra^{2s+2a} d\xi_1 d\theta_1 d\lambda_1 }{\la\xi_1,\theta_1\ra^{2s} \la \xi-\xi_1,\theta-\theta_1\ra^{2s} \la M\ra^{1-} \la\tau_1\ra^{1-} \la \tau_2\ra^{1-} }   \les 1.
$$
Evaluating the $\lambda_1$ integral using Lemma~\ref{lem:sums}, we have
 $$
 I\les \sup_{\xi,\theta,\lambda} \int \frac{\xi^2 \la\xi,\theta\ra^{2s+2a} d\xi_1 d\theta_1  }{\la\xi_1,\theta_1\ra^{2s} \la \xi-\xi_1,\theta-\theta_1\ra^{2s} \la M\ra^{1-} \la\lambda -\xi_1^5-(\xi-\xi_1)^5+\frac{\theta_1^2}{\xi_1}+\frac{(\theta-\theta_1)^2}{\xi-\xi_1}\ra^{1-} }  
 $$
 In the $\theta_1$ integral, we let  
\be\label{etatheta1}\left\{
 \begin{array}{lll}
&\eta=\frac{\theta_1^2}{\xi_1}+\frac{(\theta-\theta_1)^2}{\xi-\xi_1}-\frac{\theta^2}{\xi}=  \frac{(\theta\xi_1-\theta_1\xi)^2}{\xi\xi_1(\xi-\xi_1)},\\
& d\eta=2\frac{|\xi\theta_1-\xi_1\theta|}{|\xi_1(\xi-\xi_1)|} d\theta_1
 =2\frac{|\eta|^{\frac12} |\xi|^{\frac12}    }{|\xi_1|^{\frac12}|\xi-\xi_1|^{\frac12}} d\theta_1,\\
& M=|\eta |+|\xi\xi_1(\xi-\xi_1)|  (\xi^2+\xi_1^2) 
\end{array}\right. \ee
 to obtain the integral  
 $$  \int  \frac{|\xi|^{\frac32} \la\xi,\theta\ra^{2s+2a} |\xi_1|^{\frac12}|\xi-\xi_1|^{\frac12} d\xi_1 d\eta   }{\la\xi_1,\theta_1\ra^{2s} \la \xi-\xi_1,\theta-\theta_1\ra^{2s} \la M\ra^{1-} |\eta|^{\frac12} \la\lambda+\frac{\theta^2}{\xi} +\eta-\xi_1^5-(\xi-\xi_1)^5 \ra^{1-} },
 $$
 where $\theta_1=\frac{\theta\xi_1}{\xi}\pm\sqrt{\tfrac{|\eta\xi_1(\xi-\xi_1)|}{|\xi|}}$.

In the case $|\theta|\les \la \xi\ra$, we bound this by (using Lemma~\ref{lem:sums} twice)
\begin{multline*}
\int  \frac{|\xi|^{\frac32} \la\xi \ra^{2s+2a} |\xi_1|^{\frac12}|\xi-\xi_1|^{\frac12} d\xi_1 d\eta   }{\la\xi_1 \ra^{2s} \la \xi-\xi_1 \ra^{2s} \la M\ra^{1-} |\eta|^{\frac12} \la\lambda+\frac{\theta^2}{\xi} +\eta-\xi_1^5-(\xi-\xi_1)^5 \ra^{1-} }\\
\les \int  \frac{|\xi|^{\frac32} \la\xi \ra^{2s+2a} |\xi_1|^{\frac12}|\xi-\xi_1|^{\frac12} d\xi_1   }{\la\xi_1 \ra^{2s} \la \xi-\xi_1 \ra^{2s} \la  \xi\xi_1(\xi-\xi_1)   (\xi^2+\xi_1^2) \ra^{1-}   }\\
\les \int  \frac{|\xi|^{\frac12+} \la\xi \ra^{2s+2a}  d\xi_1   }{\la\xi_1 \ra^{2s+\frac12-} \la \xi-\xi_1 \ra^{2s+\frac12-}    (|\xi|+1)^{2-}   }
\end{multline*}
which is bounded in $\xi$ provided that $a<\min(1,s+\frac34)$. 

In the case $|\theta|\gg \la \xi\ra$, by symmetry  we can assume that $|\theta-\theta_1|\gtrsim |\theta|$, and hence we have the bound 
$$ \int  \frac{|\xi|^{\frac32} |\theta|^{ 2a} |\xi_1|^{\frac12}|\xi-\xi_1|^{\frac12} d\xi_1 d\eta   }{\big\la \frac{\theta\xi_1}{\xi}\pm\sqrt{\tfrac{|\eta\xi_1(\xi-\xi_1)|}{|\xi|}}\big\ra^{2s}   \la M\ra^{1-} |\eta|^{\frac12} \la\lambda+\frac{\theta^2}{\xi} +\eta-\xi_1^5-(\xi-\xi_1)^5 \ra^{1-} }.
$$
When $\tfrac{|\eta\xi_1(\xi-\xi_1)|}{|\xi|}<1$,  by integrating in the $\eta$ variable and using $a<\min( s,\frac12)$ we bound the last integral by
$$\les  \int  \frac{|\xi|^{\frac32} |\theta|^{ 2a} |\xi_1|^\frac12 |\xi-\xi_1|^\frac12  d\xi_1   }{\big| \frac{\theta\xi_1}{\xi} \big|^{ 2a }    \la  \xi\xi_1(\xi-\xi_1)   (\xi^2+\xi_1^2) \ra^{1-}   }.$$ 
When $\tfrac{|\eta\xi_1(\xi-\xi_1)|}{|\xi|}>1$ we bound the same integral by 
\begin{multline*} \int  \frac{|\xi| |\theta|^{ 2a} |\xi_1||\xi-\xi_1| d\xi_1 d\eta   }{\big\la \frac{\theta\xi_1}{\xi}\pm\sqrt{\tfrac{|\eta\xi_1(\xi-\xi_1)|}{|\xi|}}\big\ra^{2s}   \la M\ra^{1-} \sqrt{\tfrac{|\eta\xi_1(\xi-\xi_1)|}{|\xi|}} \la\lambda+\frac{\theta^2}{\xi} +\eta-\xi_1^5-(\xi-\xi_1)^5 \ra^{1-} }\\
\les  \int \frac{|\xi| |\theta|^{ 2a} |\xi_1| |\xi-\xi_1|  d\xi_1   }{\big| \frac{\theta\xi_1}{\xi} \big|^{2a}    \la  \xi\xi_1(\xi-\xi_1)   (\xi^2+\xi_1^2) \ra^{1-}   } .
\end{multline*}
Notice that in both cases the integrals are estimated by
$$\int |\xi|^{-1+2a+}   |\xi_1|^{-\frac12-2a+} |\xi-\xi_1|^{-\frac12+}  d\xi_1 \les 1,$$
  provided that $0<a< \min(s,\frac12)$.  

\noindent
\textsc{Case 2:} $\la\tau_1\ra\gtrsim \la M\ra$. 

In this case the needed bound boils down to  
$$
I:=\sup_{\xi_1,\theta_1,\lambda_1} \int  \frac{\xi^2 \la\xi,\theta\ra^{2s+2a} d\xi  d\theta  d\lambda  }{\la\xi_1,\theta_1\ra^{2s} \la \xi-\xi_1,\theta-\theta_1\ra^{2s} \la M\ra^{1-} \la\tau \ra^{1-} \la \tau_2\ra^{1-} }   \les 1.
$$
Evaluating the $\lambda $ integral, we have
 $$
 I\les \sup_{\xi_1,\theta_1,\lambda_1}\int  \frac{\xi^2 \la\xi,\theta\ra^{2s+2a} d\xi  d\theta  }{\la\xi_1,\theta_1\ra^{2s} \la \xi-\xi_1,\theta-\theta_1\ra^{2s} \la M\ra^{1-} \la\lambda_1 -\xi^5+(\xi-\xi_1)^5+\frac{\theta ^2}{\xi }-\frac{(\theta-\theta_1)^2}{\xi-\xi_1}\ra^{1-} }  
 $$
 In the $\theta $ integral, we let  
 \be\label{etatheta}\left\{
 \begin{array}{lll}
&\eta=\frac{\theta_1^2}{\xi_1}+\frac{(\theta-\theta_1)^2}{\xi-\xi_1}-\frac{\theta^2}{\xi}=  \frac{(\theta\xi_1-\theta_1\xi)^2}{\xi\xi_1(\xi-\xi_1)},\\
& d\eta=2\frac{|\xi\theta_1-\xi_1\theta|}{|\xi(\xi-\xi_1)|} d\theta
 =2\frac{|\eta|^{\frac12} |\xi_1|^{\frac12}    }{|\xi|^{\frac12}|\xi-\xi_1|^{\frac12}} d\theta,\\
& M=|\eta |+|\xi\xi_1(\xi-\xi_1)|  (\xi^2+\xi_1^2) 
\end{array}\right. \ee
 and obtain the integral  
 $$  \int  \frac{|\xi|^{\frac52} \la\xi,\theta\ra^{2s+2a} |\xi_1|^{-\frac12}|\xi-\xi_1|^{\frac12} d\xi  d\eta   }{\la\xi_1,\theta_1\ra^{2s} \la \xi-\xi_1,\theta-\theta_1\ra^{2s} \la M\ra^{1-} |\eta|^{\frac12}  \la\lambda_1 -\xi^5+(\xi-\xi_1)^5+\frac{\theta_1^2}{\xi_1 }-\eta\ra^{1-} },
 $$
 where $\theta =\frac{\theta_1\xi}{\xi_1}\pm\sqrt{\tfrac{|\eta\xi (\xi-\xi_1)|}{|\xi_1|}}$.
 
 In the region $|\theta|\les \la \xi\ra$, we bound the integral by 
\begin{multline*}
  \int \frac{|\xi|^{\frac52} \la\xi \ra^{2s+2a} |\xi_1|^{-\frac12}|\xi-\xi_1|^{\frac12} d\xi    }{\la\xi_1 \ra^{2s} \la \xi-\xi_1 \ra^{2s} \la \xi\xi_1(\xi-\xi_1)   (\xi^2+\xi_1^2) \ra^{1-} \la\lambda_1 -\xi^5+(\xi-\xi_1)^5+\frac{\theta_1^2}{\xi_1 } \ra^{\frac12-} } \\ \les   \int  \frac{|\xi|^{\frac32+} \la\xi \ra^{ 2a-\frac52+}  d\xi    }{|\xi_1 |  \la\lambda_1 -\xi^5+(\xi-\xi_1)^5+\frac{\theta_1^2}{\xi_1 } \ra^{\frac12-} }.
 \end{multline*}
In the region $|\xi|\les |\xi_1|$, this is bounded provided that $a<\frac12$. The same claim holds also in the  region $|\xi|\gg|\xi_1|$ by  letting $ \rho=\xi^5-(\xi-\xi_1)^5$, and noting that
$$d\rho\approx |\xi_1||\xi|^3 d\xi\approx |\rho|^{\frac34}|\xi_1|^{\frac14} d\xi.$$

In the region $|\theta|\gg  \la \xi\ra$,  by symmetry we can assume that $|\theta-\theta_1|\gtrsim |\theta|$, and hence we have the bound  
$$ \int  \frac{|\xi|^{\frac52} \big|\frac{\theta_1\xi}{\xi_1}\pm\sqrt{\tfrac{|\eta\xi (\xi-\xi_1)|}{|\xi_1|}}\big|^{ 2a} |\xi_1|^{-\frac12}|\xi-\xi_1|^{\frac12} d\xi  d\eta   }{\la  \theta_1\ra^{2s}  \la M\ra^{1-} |\eta|^{\frac12}  \la\lambda_1 -\xi^5+(\xi-\xi_1)^5+\frac{\theta_1^2}{\xi_1 }-\eta\ra^{1-} }
$$
When $|\xi|\les |\xi_1|$, for $a\leq \min(s,\frac12)$, we bound this by
$$
\int  \frac{|\xi|^{3}    d\xi    }{   |\xi|^{3-} |\xi_1|^{1-}|\xi-\xi_1|^{1-}   }\les 1.
$$
When $|\xi| \gg |\xi_1|$, for $a<\min(s,\frac12)$,  we have the bound 
\begin{multline*}
\int \frac{|\xi|^{2a-1+}     d\xi  d\eta   }{   |\xi_1|^{\frac32+a-} |\eta|^{\frac12-a}  \la\lambda_1 -\xi^5+(\xi-\xi_1)^5+\frac{\theta_1^2}{\xi_1 }-\eta\ra^{1-} } 
\\
\les \int \frac{|\xi|^{2a-1+}     d\xi     }{   |\xi_1|^{\frac32+a-}  \la\lambda_1 -\xi^5+(\xi-\xi_1)^5+\frac{\theta_1^2}{\xi_1 } \ra^{\frac12-a-} }.
\end{multline*}
Once again, by letting $ \rho=\xi^5-(\xi-\xi_1)^5$ as above, we bound this by
$$
\int \frac{   d\rho    }{   |\xi_1|^{\frac32 +\frac{3a}2-} |\rho|^{1-\frac{a}2-} \la\lambda_1 -\rho+\frac{\theta_1^2}{\xi_1 } \ra^{\frac12-a-} }\les 1 
$$
provided that $a<\frac13$.

We now consider the case $|\xi_1|<1$, $|\xi|\gg1$. 
By the Cauchy--Schwarz inequality, the convolution structure, and by performing the $\lambda_1, \lambda$ integrals, it suffices to prove
$$
\sup_{\xi,\theta}\int \int_{|\xi_1|<1} \frac{\xi^2\la\xi,\theta\ra^{2s+2a}}{\la\theta_1\ra^{2s}\la\xi,\theta-\theta_1\ra^{2s}\la M\ra^{1-}}d\xi_1d\theta_1\les 1,
$$ 
where 
$$M\approx \xi^4|\xi_1| +\eta^2\ \text{and}\ \eta^2=\frac{(\theta\xi_1-\theta_1\xi)^2}{ \xi^2|\xi_1| }.$$
Note that 
$$
|\eta| d\eta\approx \frac{|\theta\xi_1-\theta_1\xi| }{ |\xi \xi_1| } d\theta_1\approx |\eta||\xi_1|^{-\frac12} d\theta_1.
$$ 
Therefore, we write the integral as
$$
\int \int_{|\xi_1|<1} \frac{\xi^2\la\xi,\theta\ra^{2s+2a}|\xi_1|^{\frac12}}{\la\theta_1\ra^{2s}\la\xi,\theta-\theta_1\ra^{2s} \la |\eta|+\xi^2|\xi_1|^{\frac12}\ra^{2-}}d\xi_1d\eta,
$$
where $\theta_1=\frac{\theta\xi_1}\xi\pm\eta|\xi_1|^{\frac12}$.

In the case $|\theta|\les |\xi|$, we estimate this by
$$
\int \int_{|\xi_1|<1} \frac{\xi^{2+2a} |\xi_1|^{\frac12}}{\la \eta|\xi_1|^{\frac12}\ra^{2s}  \la |\eta|+\xi^2|\xi_1|^{\frac12}\ra^{2-}}d\xi_1d\eta,
$$
Since the integral decreases in $s$  we can assume that  $s< \frac12$.  We bound the integral by
$$
 \int_{|\xi_1|<1} \frac{\xi^{2+2a} |\xi_1|^{\frac12-s}}{   ( \xi^2|\xi_1|^{\frac12})^{1+2s-}}d\xi_1 \les |\xi|^{2a-4s+}\les 1
$$
provided that $a<2s.$ For $s\geq\frac12$, we use the bound above for $s=\frac12-$ with $a<1$.

In the case $|\theta|\gg|\xi|$ and $ | \theta|\gg |\theta_1|$,  we have 
$$
\int \int_{|\xi_1|<1} \frac{\xi^{2} |\theta|^{ 2a} |\xi_1|^{\frac12}}{\la \frac{\theta\xi_1}\xi \pm\eta|\xi_1|^{\frac12}\ra^{2s}  \la |\eta|+\xi^2|\xi_1|^{\frac12}\ra^{2-}}d\xi_1d\eta,
$$
For $s<\frac12$ we use the bound
$$
\int \int_{|\xi_1|<1} \frac{\xi^{2} |\theta|^{ 2a} |\xi_1|^{\frac12-s}}{|\frac{\theta|\xi_1|^{\frac12}}\xi\pm\eta |^{2s} \la  \eta \ra^{1-2s+2a} ( \xi^2|\xi_1|^{\frac12})^{1+2s-2a-}}d\xi_1d\eta
\les \int_{|\xi_1|<1} \frac{\xi^{2+2a}   |\xi_1|^{\frac12-s-a}}{  ( \xi^2|\xi_1|^{\frac12})^{1+2s-2a-}}d\xi_1\les 1,
$$
provided that $a<\frac{2s}3$. For $s\geq\frac12$ we use the bound above for $s=\frac12-$ with $a<\frac13$.

It remains to consider the case $|\theta|\gg|\xi|$ and $ | \theta|\les |\theta_1|$. When $s<\frac12$, we have 
$$
\int \int_{|\xi_1|<1} \frac{\xi^2 |\theta|^{2a}|\xi_1|^{\frac12}}{|\theta(1-\frac{\xi_1}\xi)\pm\eta|\xi_1|^{\frac12}|^{2s} \la |\eta|+\xi^2|\xi_1|^{\frac12}\ra^{2-}}d\xi_1d\eta \les \int_{|\xi_1|<1} \frac{\xi^2  |\xi_1|^{\frac12-s+a}}{ ( \xi^2|\xi_1|^{\frac12})^{1+2s-2a-}}d\xi_1\les 1,
$$
provided that $a<s$. Once again, for $s\geq \frac12$ we use the bound for $s=\frac12-$ with $a<\frac12$.

The remaining case $|\xi|,|\xi_1|<1$ is treated similarly but easier.
\end{proof}

We now consider nonlinear estimates involving the norm 
$$
\|F\|_{X^{\frac12+, \frac{s}2-\frac14,-b_1}}=\big\|\la\xi,\theta\ra^{\frac12+} |\xi|^{ \frac{s}2-\frac14} 
\la\lambda-\xi^5+\tfrac{\theta^2}\xi\ra^{-b_1} \widehat F(\xi,\theta,\lambda)\big\|_{L^2_{\xi,\theta,\lambda}},
$$
where $b_1=\tfrac34-\tfrac{s}2$. Recall that this norm appears only when the Sobolev index, $s+a$, is at least $\frac12$. Therefore, for $a$ we have the lower bound $\frac12-s$. Together with the upper bound for $a$ we see that the relevant range for $s$ is  $\tfrac3{10}<s<\frac52$.
\begin{theorem}\label{thm:nonlin2}
Fix $\tfrac3{10}<s<\frac52$ and $\frac12-s<a<\min(\frac{2s}3,\frac13,\frac32-\frac{3s}5)$ and let $b_1=\tfrac34-\tfrac{s+a}2$. If $b<\tfrac12$ is sufficiently close to $\tfrac12$, then
$$
\|(u^2)_x\|_{X^{\frac12+, \frac{s+a}2-\frac14,-b_1}}\les \|u\|_{X^{s,b}}^2.
$$
\end{theorem}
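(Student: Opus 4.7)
The proof proceeds along the same lines as Theorem~\ref{thm:nonlin1}. By duality, the desired bound reduces to showing the trilinear estimate
\begin{equation*}
\int_{\R^6} \frac{|\xi|^{1+\frac{s+a}2-\frac14}\, \la\xi,\theta\ra^{\frac12+}\, |f(\sigma)f_1(\sigma_1)f_2(\sigma-\sigma_1)|}{\la\xi_1,\theta_1\ra^s \la \xi-\xi_1,\theta-\theta_1\ra^s \la \tau\ra^{b_1}\la\tau_1\ra^b \la \tau_2\ra^b}\, d\sigma\, d\sigma_1 \les \|f\|_{L^2}\|f_1\|_{L^2}\|f_2\|_{L^2},
\end{equation*}
with the same notation for $\sigma, \tau, \tau_j, M$ as in the previous proof. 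The resonance identity $|\tau-\tau_1-\tau_2|\gtrsim M$ continues to hold, and the overall organization of the argument mirrors that of Theorem~\ref{thm:nonlin1}: I will branch on which of $|\tau|,|\tau_1|,|\tau_2|$ is $\gtrsim M$, and within each branch further split into the frequency regions $|\xi_1|,|\xi-\xi_1|\gtrsim 1$, $|\xi_1|<1<|\xi|$, and $|\xi|,|\xi_1|<1$.

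The subcases $|\tau_j|\gtrsim M$ with $j=1,2$ are the more benign ones, since $\la\tau_j\ra^{b}$ supplies $M^{b}$ in the denominator with $b$ close to $\tfrac12$. After Cauchy--Schwarz, evaluation of the $\lambda_1$ integral via Lemma~\ref{lem:sums}, and the change of variables \eqref{etatheta1} (or \eqref{etatheta}) in $\theta_1$ (or $\theta$), one is left with a single $\xi_1$-integral to be estimated directly in each of the three frequency regions just as in Theorem~\ref{thm:nonlin1}. The upper bounds $a<\tfrac{2s}3$ and $a<\tfrac13$ in the statement are inherited from these direct $\xi_1$-integrations (most tightly in the low-frequency region), precisely as in the previous proof.

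The main obstacle is the case $|\tau|\gtrsim M$. Here the only $\tau$-gain is $\la\tau\ra^{b_1}$ with $b_1=\tfrac34-\tfrac{s+a}{2}\in(-\tfrac12,\tfrac12)$, which is in fact negative when $s+a>\tfrac32$; in that regime $\la\tau\ra^{|b_1|}$ sits in the numerator and must be tamed. I will use the elementary inequality
$$\la\tau\ra\les \la\tau_1\ra+\la\tau_2\ra+|\tau-\tau_1-\tau_2|\les \la\tau_1\ra+\la\tau_2\ra+M$$
to redistribute $\la\tau\ra^{|b_1|}$ onto the remaining factors: either onto $\la\tau_1\ra,\la\tau_2\ra$ at the cost of reducing their effective $b$-power to $b-|b_1|$ (still positive for $b$ close enough to $\tfrac12$, since $|b_1|<\tfrac12$), or onto $M$. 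With this redistribution in place, the subsequent $\lambda_1$-integration via Lemma~\ref{lem:sums} and the $\theta_1$ change of variables proceed as in Theorem~\ref{thm:nonlin1}.

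The new constraint $a<\tfrac32-\tfrac{3s}5$, absent in Theorem~\ref{thm:nonlin1}, will emerge in the low-frequency subregion $|\xi_1|<1<|\xi|$ of the hard case $|\tau|\gtrsim M$. In this subregion $M\approx |\xi|^4|\xi_1|$ dominates the resonance, the input Sobolev denominator loses its $|\xi_1|$-dependence, and the modified output weight $|\xi|^{\frac{s+a}2-\frac14}\la\xi,\theta\ra^{\frac12+}$ interacts with the $M^{b_1}$ gain to produce, after all integrations, a $\xi_1$-integral of the form $\int_0^1 |\xi_1|^{\gamma(s,a)}\, d\xi_1$ multiplied by a power of $|\xi|$ that must remain bounded as $|\xi|\to\infty$. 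Demanding integrability near $|\xi_1|=0$ and boundedness in $|\xi|$ simultaneously saturates precisely at $a=\tfrac32-\tfrac{3s}5$, yielding the sharp upper bound in the statement.
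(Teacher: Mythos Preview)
Your outline captures the overall duality/Cauchy--Schwarz reduction and the case split on $\max(|\tau|,|\tau_1|,|\tau_2|)\gtrsim M$, but there are two genuine gaps.

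First, the cases $\la\tau_j\ra\gtrsim M$ are \emph{not} as in Theorem~\ref{thm:nonlin1} and are not benign in the sense you describe. The output weight here carries the factor $\la\tau\ra^{-b_1}=\la\tau\ra^{s+a-\tfrac32}$, which for $s+a>\tfrac32$ is a \emph{positive} power of $\la\tau\ra$ in the numerator and survives into this branch as well. The paper handles this by arranging not merely $\la\tau_1\ra\gtrsim M$ but $\la\tau_1\ra\gtrsim\max(M,\la\tau\ra)$; then $\la\tau\ra^{s+a-\tfrac32}\la\tau_1\ra^{-(1-)}\les\la\tau_1\ra^{s+a-\tfrac52+}\les\la M\ra^{-(\tfrac52-s-a-)}$, and the resulting $\xi_1,\theta_1$ integral yields $|\xi|^{3s+5a-\tfrac{15}2+}$, forcing precisely $a<\tfrac32-\tfrac{3s}5$. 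So the new constraint already appears in what you call the benign branch, not only in the low-frequency part of the hard case; your attribution of it solely to the region $|\xi_1|<1<|\xi|$ with $|\tau|\gtrsim M$ is incorrect.

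Second, your redistribution device---absorbing $\la\tau\ra^{|b_1|}$ into $\la\tau_j\ra^{b}$ at the cost $b\to b-|b_1|$---does not close. Since $|b_1|=\tfrac{s+a}2-\tfrac34$ can be arbitrarily close to $\tfrac12$ as $s+a\to\tfrac52$, the residual power $b-|b_1|$ tends to $0^+$, which is far too weak to perform the $\lambda_1$ integration via Lemma~\ref{lem:sums} (one needs the sum of the two $\tau_j$-exponents to exceed $1$). The option ``onto $M$'' that you mention is the correct one and is what the paper actually does in its Case~1, but there the proof is substantially more elaborate than in Theorem~\ref{thm:nonlin1}: it splits into subcases $|\xi_1|\ll|\xi|$, $|\xi_1|\approx|\xi-\xi_1|\gg|\xi|$, $|\xi_1|\approx|\xi-\xi_1|\approx|\xi|$, and within each into several regions determined by the relative size of $|\eta|$ and $|\xi\xi_1(\xi-\xi_1)|(\xi^2+\xi_1^2)$, using different changes of variable ($\eta$ in $\theta_1$, or $\rho=\xi_1^5+(\xi-\xi_1)^5$ in $\xi_1$) in each. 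The constraint $a<\tfrac32-\tfrac{3s}5$ also arises here, in the region $|\xi_1|\ll|\xi|$ with $|\eta|\ll\la\xi^4\xi_1\ra$ and $|\theta_1|\ll|\xi_1|\xi^2$ (not specifically $|\xi_1|<1$). The entire analysis is then repeated separately for $\tfrac12<s+a<\tfrac32$, where $b_1>0$ and the bookkeeping is different. Your sketch does not provide any of this structure.
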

\begin{proof}
Using the notation of the previous proof  it suffices to prove that
\be\label{eq:ntp}
\sup_{\xi,\theta,\lambda} \int  \frac{|\xi|^{s+a+\frac32} \la \xi,\theta\ra^{1+} \la\tau\ra^{s+a -\tfrac32  } d\xi_1d\theta_1d\lambda_1  }{\la\xi_1,\theta_1\ra^{2s }\la\xi-\xi_1,\theta-\theta_1\ra^{2s} \la \tau_1\ra^{1-}  \la  \tau_2\ra^{1-} } <\infty.
\ee
Below, we only consider the case $|\xi|\gg 1$;   the case $|\xi|\les 1$ is easier and will be omitted.

We first consider the case $\tfrac32\leq s+a<\tfrac52$. Since $a<\frac13$, we can assume that $s>1$. We investigate this in two parts: 

1. $\la \tau_1\ra$ and $\la \tau_2\ra \ll M$, 

2. $\la \tau_1\ra$ or $\la \tau_2\ra \gtrsim \max(M,\la \tau\ra)$, 

where 
$$M:= |\xi\xi_1(\xi-\xi_1)|  (\xi^2+\xi_1^2) +| \eta|,$$
$$\eta=\tfrac{\theta_1^2}{\xi_1}+\tfrac{(\theta-\theta_1)^2}{\xi-\xi_1}-\tfrac{\theta^2}{\xi}=  \tfrac{[(\theta-\theta_1)\xi_1-\theta_1(\xi-\xi_1)]^2}{\xi\xi_1(\xi-\xi_1)}=(\tfrac{\theta-\theta_1}{\xi-\xi_1}-\tfrac{\theta_1}{\xi_1})^2 \tfrac{\xi_1(\xi-\xi_1)}{\xi}.$$

\noindent
\textsc{Case 1:} $\la \tau_1\ra$ and $\la \tau_2\ra \ll M$. 

In this case we have $| \tau|\approx M$. Integrating in $\lambda_1$, we estimate the integral in \eqref{eq:ntp} by
$$
\int  \frac{|\xi|^{s+a+\frac32} \la \xi,\theta\ra^{1+} \la M\ra^{s+a -\tfrac32  } d\xi_1d\theta_1   }{\la\xi_1,\theta_1\ra^{2s }\la\xi-\xi_1,\theta-\theta_1\ra^{2s} \big\la \lambda +\frac{\theta^2}\xi-\xi_1^5-(\xi-\xi_1)^5+\eta \big\ra^{1-} }.
$$
We consider several subcases when $|\xi|\gg 1$.

Subcase 1.i: $ |\xi_1|\ll  |\xi|$.\footnote{The case $ |\xi-\xi_1|\ll  |\xi|$ is similar by symmetry.} 

In this case we have
$$
|\xi-\xi_1|\approx|\xi|,\quad M\approx \xi^4|\xi_1|+|\eta|,\quad |\eta|\approx (\tfrac{\theta-\theta_1}{\xi-\xi_1}-\tfrac{\theta_1}{\xi_1})^2 |\xi_1|.
$$
Therefore, it is reasonable to consider the following regions:\\
Region 1: $|\eta|\ll \la \xi^4\xi_1\ra$ and $|\theta_1|\ll|\xi_1|\xi^2$,\\
Region 2: $|\eta|\ll\la \xi^4\xi_1\ra$, $|\theta_1|\gtrsim|\xi_1|\xi^2$, and $|\theta-\theta_1|\gtrsim|\xi|^3$,\\
Region 3: $|\eta|\gtrsim \la \xi^4\xi_1\ra$ and $|\theta_1|\gtrsim |\eta\xi_1|^{\frac12}\gtrsim |\xi_1|\xi^2$,\\
Region 4: $|\eta|\gtrsim \la \xi^4\xi_1\ra$, $|\theta_1|\ll |\eta\xi_1|^{\frac12}  $ and $|\theta-\theta_1 |\gtrsim \tfrac{|\xi||\eta|^{\frac12}}{|\xi_1|^\frac12}\gtrsim |\xi|^3$.

In Region 1, letting  $\rho=-\xi_1^5-(\xi-\xi_1)^5+\eta$ in the $\xi_1$ integral, we note that
\be\label{rhochange}
|\rho|\approx |\xi|^5\gg |\eta|,\quad d\rho=\Big| -5\xi_1^4+5(\xi-\xi_1)^4 -\tfrac{\theta_1^2}{\xi_1^2} +\tfrac{(\theta-\theta_1)^2}{(\xi-\xi_1)^2}\Big| d\xi_1\gtrsim  \xi^4  d\xi_1.
\ee
Therefore we can estimate the integral by
\begin{multline*}
\int  \frac{ |\xi|^{s+a-\frac52+} |\xi|^{4s+4a -6  } \la \xi,\theta\ra^{1+}  d\rho d\theta_1   }{\la \theta_1\ra^{s-a+\frac32  }\la\xi ,\theta-\theta_1\ra^{2s} \big\la \lambda +\frac{\theta^2}\xi-\rho  \big\ra^{1-} \la \rho\ra^{0+} } 
\\
\les \int  \frac{|\xi|^{3s+5a-\frac{15}2+}\la \xi,\theta\ra^{1+}    d\theta_1   }{\la \theta_1\ra^{s-a+\frac32 }\la\xi ,\theta-\theta_1\ra^{1+}   }\les  |\xi|^{3s+5a-\frac{15}2+}\les 1,
\end{multline*}
provided that $s-a+\frac32 >1$ and $3s+5a-\frac{15}2<0$. We thus need $a<\min(s+\frac12,\frac32-\frac{3s}5)$.

In Region 2, we estimate the integral by (using $2s>2$,   $|\theta_1|\gtrsim |\xi_1|   \xi^2$, and $|\theta-\theta_1|\gtrsim |\xi|^3$)
$$
\int  \frac{|\xi|^{s+a+\frac32}  |\xi|^{4s+4a -6  }  |\xi_1|^{s+a -\tfrac32  }  \la  \xi,\theta\ra^{1+}d\xi_1d\theta_1   }{ |\xi_1|   \xi^2 \la \theta_1\ra^{1+ } |\xi|^{6s-3-} \la\xi, \theta-\theta_1\ra^{1+}  }
\les |\xi|^{5a- s-\frac72+} \int_{  |\xi_1|\ll|\xi|} |\xi_1|^{ s+a-\frac52} d\xi_1\les 
 |\xi|^{6a-5  +} ,
$$
which is bounded provided that  $a<\frac56$. 
  
In Region 3, passing to $\eta$ variable in $\theta_1$ integral (using \eqref{etatheta1}), we have the bound
\begin{multline*}
\int  \frac{|\xi|^{s+a+\frac32}  |\xi_1|^{\frac12} |\eta|^{s+a -\tfrac32  } d\xi_1d\eta }{\la\eta\xi_1\ra^{s-\frac12-} |\xi |^{2s-1-}  |\eta|^{ \tfrac12}\big\la \lambda +\frac{\theta^2}\xi-\xi_1^5-(\xi-\xi_1)^5+\eta \big\ra^{1-} }
\\ \les \int  \frac{|\xi|^{-s+a+\frac52+}  |\xi_1|^{\frac12}  d\xi_1d\eta }{|\eta\xi_1|^{\frac{s+a}2-\frac14}   |\eta|^{2-s-a}\big\la \lambda +\frac{\theta^2}\xi-\xi_1^5-(\xi-\xi_1)^5+\eta \big\ra^{1-} }.
\end{multline*}
Integrating in $\eta$ using $|\eta|\gtrsim \la  \xi^4 \xi_1\ra $, we have
$$
\les \int_{|\xi_1|\ll|\xi| } \frac{|\xi|^{-s+a+\frac52+}   |\xi_1|^{\frac12} d\xi_1  }{( \xi^4\xi_1^2)^{\frac{s+a}2-\frac14}   |\xi^4\xi_1|^{2-s-a-} }=|\xi|^{3a+s-\frac92+}\int_{|\xi_1|\ll|\xi| }    |\xi_1|^{-1+} d\xi_1 \les 1
$$
provided that $a<\frac32-\frac{s}3$.

In Region 4,  noting that $|\theta|\gg| \theta_1|$ and using $|\theta|\approx|\theta-\theta_1|\gtrsim \tfrac{|\xi||\eta|^{\frac12}}{|\xi_1|^\frac12}\gtrsim|\xi|^3$ and $|\eta|\gtrsim\xi^4|\xi_1|$,  we have
\begin{multline*}
\int  \frac{|\xi|^{s+a+\frac32}     |\eta|^{s+a -\frac32 } d\xi_1d\theta_1  }{\la\xi_1,\theta_1\ra^{2s }\la\theta \ra^{2s-1-}  }
\les \int  \frac{|\xi|^{-s+a+\frac52+}    |\xi_1|^{s-\frac12+}    d\xi_1  d\theta_1}{\la\xi_1,\theta_1 \ra^{2s }|\eta|^{1-a}  }
\\ \les \int  \frac{|\xi|^{-s+a+\frac52+}   |\xi_1|^{s-\frac12+}      d\xi_1  }{\la\xi_1\ra^{2s -1}|\xi^4\xi_1|^{1-a}  }\les |\xi|^{5a-s-\frac32+}\les 1,
\end{multline*}
 provided that  $a<\min(\frac{s}5+\frac3{10}, s-\frac12)$.

Subcase 1.ii: $ |\xi_1|\approx |\xi-\xi_1|\gg  |\xi|$.  

In this case we have
$$
M\approx \xi_1^4|\xi |+|\eta|,\quad |\eta|\approx  (\tfrac{\theta-\theta_1}{\xi-\xi_1}-\tfrac{\theta_1}{\xi_1})^2\tfrac{\xi_1^2}{|\xi|}.
$$
This leads to the following regions\footnote{Region 1 and 2 suffice to cover the case $|\eta|\ll\xi_1^4|\xi|$ by chosing the implicit constants carefully.}:\\
Region 1: $|\eta|\ll\xi_1^4|\xi|$, and $|\theta_1|, |\theta-\theta_1|\ll|\xi|\xi_1^2$,\\
Region 2: $|\eta|\ll\xi_1^4|\xi|$, and $|\theta_1|, |\theta-\theta_1|\gtrsim|\xi|\xi_1^2$, \\
Region 3: $|\eta|\gtrsim \xi_1^4|\xi|$, and $|\theta_1|\gtrsim |\eta\xi |^{\frac12}\gtrsim |\xi |\xi_1^2$,\\
Region 4: $|\eta|\gtrsim \xi_1^4|\xi|$, $|\theta_1|\ll |\eta\xi |^{\frac12}  $, and $|\theta-\theta_1 |\gtrsim |\eta\xi |^{\frac12}\gtrsim |\xi |\xi_1^2$.

In Region 1, we bound the integral by
$$
\int \frac{|\xi|^{2s+2a } \la \xi,\theta\ra^{1+} |\xi_1|^{4s+4a -6  } d\xi_1d\theta_1   }{ |\xi_1|^{4s-2-}\la\xi ,\theta_1\ra^{1+ }\la\xi ,\theta-\theta_1\ra^{1+} \big\la \lambda +\frac{\theta^2}\xi-\xi_1^5-(\xi-\xi_1)^5+\eta \big\ra^{1-} }.
$$
Letting  $\rho=-\xi_1^5-(\xi-\xi_1)^5+\eta$ in the $\xi_1$ integral, we note that
$$
|\rho|\approx \xi_1^4 |\xi| \gg |\eta|,
$$
$$
d\rho=\Big| -5\xi_1^4+5(\xi-\xi_1)^4 -\tfrac{\theta_1^2}{\xi_1^2} +\tfrac{(\theta-\theta_1)^2}{(\xi-\xi_1)^2}\Big| d\xi_1\gtrsim   |\xi_1|^3 |\xi| d\xi_1 \gtrsim\xi^4  d\xi_1.
$$ 
Therefore we can estimate the integral by (for $a<1$)
$$
\int  \frac{|\xi|^{2s+2a }|\xi|^{ 4a -8+  } \la \xi,\theta\ra^{1+}  d\rho d\theta_1   }{  \la\xi ,\theta_1\ra^{1+ }\la\xi ,\theta-\theta_1\ra^{1+} \big\la \lambda +\frac{\theta^2}\xi+\rho\big\ra^{1-} | \rho|^{0+} }\les |\xi|^{2s+6a   -8+  } \les 1,
$$
provided that   $a<\min(1,\frac43-\frac{ s}3)$.

In Region 2, we estimate the integral by 
\begin{multline*}
\int  \frac{|\xi|^{2s+2a}   |\xi_1|^{4s+4a -6 }  \la \xi,\theta\ra^{1+} d\xi_1d\theta_1   }{(|\xi|\xi_1^2)^{4s-2-}   \la\xi ,\theta_1\ra^{1+ }\la  \xi ,\theta-\theta_1\ra^{1+} } \\ \les  
\int_{|\xi_1|\gg|\xi|}  |\xi|^{2-2s+2a+}   |\xi_1|^{-4s+4a -2 +}    d\xi_1 \les |\xi|^{1-6s+6a +}\les 1 ,
\end{multline*}
provided that $-4s+4a-2 <-1$ and $1-6s+6a <0$.  We thus need $a< s-\frac16 $.

In Region 3, passing to $\eta$ variable in $\theta_1$ integral, we have the bound
\begin{multline*}
\int  \frac{|\xi|^{s+a+\frac32} \la \xi,\theta\ra^{1+}|\xi_1|^{\frac12} |\eta|^{s+a -\tfrac32  } d\xi_1d\eta }{|\eta\xi |^{s}\big\la\xi_1 ,\theta-\frac{\theta\xi_1}{\xi}\pm\sqrt{\tfrac{|\eta\xi_1(\xi-\xi_1)|}{|\xi|}}\big\ra^{2s}  |\eta|^{ \tfrac12}\big\la \lambda +\frac{\theta^2}\xi-\xi_1^5-(\xi-\xi_1)^5+\eta \big\ra^{1-} }
\\ = \int  \frac{|\xi|^{ a+\frac32}|\xi_1|^{\frac12}   \la \xi,\theta\ra^{1+}   d\xi_1d\eta }{ \big\la\xi_1 ,\theta-\frac{\theta\xi_1}{\xi}\pm\sqrt{\tfrac{|\eta\xi_1(\xi-\xi_1)|}{|\xi|}}\big\ra^{2s}  |\eta|^{2-a}\big\la \lambda +\frac{\theta^2}\xi-\xi_1^5-(\xi-\xi_1)^5+\eta \big\ra^{1-} }.
\end{multline*}
Noting that
\begin{multline*}
\big\la\xi_1 ,\theta-\tfrac{\theta\xi_1}{\xi}\pm\sqrt{\tfrac{|\eta\xi_1(\xi-\xi_1)|}{|\xi|}}\big\ra^{2s}  |\eta|^{2-a}\gtrsim |\xi_1|^{2s-1-}\la\xi,\theta\ra^{1+}|\eta|^{\frac32-a-}|\xi|^{\frac12+}|\xi_1|^{-1 -}\\ \gtrsim |\xi_1|^{2s+4-4a-}|\xi|^{2-a-}\la\xi,\theta\ra^{1+}  |\eta|^{0+},
\end{multline*}
we estimate the integral by
$$
\int_{|\xi_1|\gg|\xi|} \frac{|\xi|^{2a-\frac12+}    d\xi_1  }{|\xi_1|^{2s-4a+\frac72-}    }\les 1,
$$
provided that $a<\min(\frac{s}2+\frac58 ,\frac{s}3+\frac12)$.

In Region 4,  noting that $|\theta|\gg| \theta_1|$ and using $|\theta|\gtrsim  |\xi \eta|^{\frac12} $ and $|\eta|\gtrsim\xi_1^4|\xi |$,  we have the bound
$$ 
\int  \frac{|\xi|^{s+a+\frac32}     |\eta|^{s+a -\frac32 } d\xi_1d\theta_1  }{\la\xi_1,\theta_1\ra^{2s }\la\theta \ra^{2s-1-}  }
\les \int  \frac{|\xi|^{ a+ 2+}      d\xi_1  d\theta_1}{\la\xi_1,\theta_1 \ra^{2s }|\eta|^{1-a-}  }
  \les \int_{|\xi_1|\gg|\xi| } \frac{|\xi|^{2a+1+}        d\xi_1  }{|\xi_1|^{2s -4a+3 }  }\les 1,
$$
 provided that  $a< \min(1,\frac{s}3+\frac16)$.

Subcase 1.iii:  $|\xi_1|\approx |\xi-\xi_1| \approx |\xi|$.

In this case we have
$$
M\approx |\xi|^5 +|\eta|,\quad |\eta|\approx  (\tfrac{\theta-\theta_1}{\xi-\xi_1}-\tfrac{\theta_1}{\xi_1})^2 |\xi| .
$$
In this subcase, it suffices to consider 3 regions:\\
Region 1: $|\eta|\ll |\xi|^5$,\\
Region 2:   $|\eta|\gtrsim |\xi|^5$ and $|\theta_1|\gtrsim |\eta\xi |^{\frac12}\gtrsim |\xi|^3$,\\
Region 3: $|\eta|\gtrsim |\xi|^5$, $|\theta_1|\ll |\eta\xi |^{\frac12}  $ and $|\theta-\theta_1 |\gtrsim |\eta\xi |^{\frac12}\gtrsim |\xi|^3$.

In Region 1, using 
$$\la \xi ,\theta_1\ra^{2s}\la \xi,\theta-\theta_1\ra^{2s}  \gtrsim \la \xi,\theta \ra^{1+} |\xi|^{4s-1-},$$ and passing to $\eta $ variable in $\theta_1$ integral, we have the bound
\be \label{rhotrick}
\int  \frac{|\xi|^{2s+6a -\frac{9}2+ }     d\xi_1d\eta }{  |\eta|^{ \tfrac12}\big\la \lambda +\frac{\theta^2}\xi-\xi_1^5-(\xi-\xi_1)^5+\eta \big\ra^{1-} } \les 
\int_{|\xi_1|\approx |\xi|}  \frac{|\xi|^{2s+6a -\frac{9}2 + }     d\xi_1  }{  \big\la \lambda +\frac{\theta^2}\xi-\xi_1^5-(\xi-\xi_1)^5  \big\ra^{\tfrac12-} }.
\ee
Using H{\oo}lder's Inequality, this is 
$$
\les |\xi|^{2s+6a -\frac{9}2 } |\xi|^{0+} \Big[\int \frac{  |\xi-2\xi_1|   d\xi_1  }{  \big\la \lambda +\frac{\theta^2}\xi-\xi_1^5-(\xi-\xi_1)^5  \big\ra^{1+} }\Big]^{\frac12-}.
$$
Letting $\rho=\xi_1^5+(\xi-\xi_1)^5$ in the $\xi_1$ integral and noting that 
$$
|\rho|\approx |\xi|^5,\quad d\rho =|5\xi_1^4-5(\xi-\xi_1)^4|d\xi_1\gtrsim |\xi|^3|\xi-2\xi_1| d\xi_1 ,
$$ 
we estimate the integral by $|\xi|^{2s+6a -6+}\les 1 $, provided that $a<1 -\frac{s}3$.

Regions 2 and 3 are identical to Regions 3 and 4 of Subcase 1.ii, respectively. 

\noindent
\textsc{Case 2:} $\la \tau_1\ra$ or $\la \tau_2\ra \gtrsim \max(M,\la \tau\ra)$.

By symmetry, we assume that $\la \tau_1\ra \gtrsim \max(M,\la \tau\ra )$. Integrating in $\lambda_1$, we estimate the integral in \eqref{eq:ntp} by
$$
\int  \frac{|\xi|^{s+a+\frac32} \la \xi,\theta\ra^{1+}  d\xi_1d\theta_1   }{\la\xi_1,\theta_1\ra^{2s }\la\xi-\xi_1,\theta-\theta_1\ra^{2s} \la M\ra^{\frac52-s-a -} }.
$$

If $|\xi_1|, |\xi-\xi_1|\gtrsim 1$, using $|M|\gtrsim |\xi|^3|\xi_1||\xi-\xi_1| \gtrsim |\xi|^4$, the integral above has the upper bound
$$
\int  \frac{|\xi|^{5s+5a+\frac32-10+} \la \xi,\theta\ra^{1+}  d\xi_1d\theta_1   }{\la\xi_1,\theta_1\ra^{2s }\la\xi-\xi_1,\theta-\theta_1\ra^{2s}  }.
$$
Noting that $\max(\la\xi_1,\theta_1\ra,\la\xi-\xi_1,\theta-\theta_1\ra)\gtrsim \la \xi,\theta\ra$, we further bound this by
$$
\int  \frac{|\xi|^{5s+5a+\frac32-10+} |\xi|^{-2s+1+}   d\xi_1d\theta_1   }{\min(\la\xi_1,\theta_1\ra,\la\xi-\xi_1,\theta-\theta_1\ra)^{2s}  } \les |\xi|^{3s+5a-\frac{15}2+},
$$
which is $\lesssim 1$ for $|\xi|>1$ provided that $a<\frac32-\frac{3s}5$. In the last inequality we used $s>1$ to integrate in $\xi_1 $ and $\theta_1$.

When $|\xi_1|<1$, we have $|M|\gtrsim \xi^4|\xi_1|$, which leads to
$$
\int  \int_{|\xi_1|<1} \frac{|\xi|^{5s+5a+\frac32-10+} \la \xi,\theta\ra^{1+}  d\xi_1d\theta_1   }{\la \theta_1\ra^{2s }\la\xi ,\theta-\theta_1\ra^{2s} |\xi_1|^{\frac52-s-a -} }\les |\xi|^{5s+5a+\frac32-10+}\la \xi,\theta\ra^{1-2s+}\les |\xi|^{3s+5a-\frac{15}2+},
$$ 
which is bounded for $|\xi|>1$ provided that $a<\frac32-\frac{3s}5$. The case $|\xi-\xi_1|\les 1$ is similar by symmetry.

Now, we consider the case $\frac12<s+a<\frac32$. 
We first consider the resonance case $|\xi_1|<1$. The case $|\xi-\xi_1|<1$ is similar. It suffices to bound 
\begin{multline*}
 \int \int_{|\xi_1|<1} \frac{|\xi|^{s+a+\frac32} \la \xi,\theta\ra^{1+}  d\xi_1d\theta_1   }{\la \theta_1\ra^{2s }\la\xi ,\theta-\theta_1\ra^{2s} \la M\ra^{\frac32-s-a } \big\la \lambda +\frac{\theta^2}\xi-\xi_1^5-(\xi-\xi_1)^5+\eta \big\ra^{1-}  }\\+ \int \int_{|\xi_1|<1} \frac{|\xi|^{s+a+\frac32} \la \xi,\theta\ra^{1+}  d\xi_1d\theta_1   }{\la \theta_1\ra^{2s }\la\xi ,\theta-\theta_1\ra^{2s} \la M\ra^{1- }   } .
\end{multline*}
If $s>\frac12$ or $|\xi|\gtrsim |\theta|$, noting that 
$M>\xi^4|\xi_1|$,
we bound the second integral by
 $$
\les  \int_{|\xi_1|<1} \frac{|\xi|^{s+a+\frac32} \la \xi \ra^{1-\min(2s,4s-1))+}  d\xi_1   }{  |\xi|^{4-}|\xi_1|^{1-  }   } \les |\xi|^{ s+ a-\frac32-\min(2s,4s-1))+}\les 1
$$
provided that $a<\frac32-s+ \min(2s,4s-1)$.  We bound the first integral by
$$
\les  \int \int_{|\xi_1|<1} \frac{|\xi|^{s+a+\frac32} \la \xi \ra^{1- 2s +}  d\xi_1  d\theta_1 }{  \la M\ra^{\frac32-s-a } \big\la \lambda +\frac{\theta^2}\xi-\xi_1^5-(\xi-\xi_1)^5+\eta \big\ra^{1-}   }.
$$
Passing to $\eta $ variable
$$
\les  \int \int_{|\xi_1|<1} \frac{|\xi|^{s+a+\frac32} \la \xi \ra^{1- 2s +} |\xi_1|^{\frac12} d\xi_1  d\eta  }{  \la \xi^4\xi_1\ra^{\frac32-s-a } |\eta|^{\frac12}\big\la \lambda +\frac{\theta^2}\xi-\xi_1^5-(\xi-\xi_1)^5+\eta \big\ra^{1-}   } 
$$
$$
\les \int_{|\xi_1|<1} \frac{|\xi|^{s+a+\frac32} \la \xi \ra^{1- 2s +} |\xi_1|^{\frac12} d\xi_1   }{  \la \xi^4\xi_1\ra^{\frac32-s-a }  \big\la \lambda +\frac{\theta^2}\xi-\xi_1^5-(\xi-\xi_1)^5  \big\ra^{\frac12-}   }. 
$$
Letting $\rho= \xi_1^5+(\xi-\xi_1)^5$ we have
$$
\les \int_{|\rho-\xi^5|\les \xi^4} \frac{|\xi|^{s+a+\frac32} \la \xi \ra^{1- 2s +}  d\rho   }{  |\xi |^{4\min(\frac12,\frac32-s-a) }  \big\la \lambda +\frac{\theta^2}\xi-\rho \big\ra^{\frac12-}   \xi^4}\les |\xi|^{ a-s+\frac12-4\min(\frac12,\frac32-s-a)+} 
$$
by H{\oo}lder's Inequality. This is bounded if $a\leq 1-s$ or if $1-s<a<  \frac{11}{10}-\frac{3s}5 $.

If $s<\frac12$ and $|\theta|\gg|\xi|$,  
without loss of generality $\la\xi ,\theta-\theta_1\ra \gtrsim \la\xi ,\theta \ra$, and hence we can bound both integrals by
$$
\int \int_{|\xi_1|<1} \frac{|\xi|^{s+a+\frac32} \la  \theta\ra^{1-2s+}  d\xi_1d\theta_1   }{\la \theta_1\ra^{2s } \la M\ra^{\frac32-s-a }   }.
$$
 Passing to $\sigma=\sqrt{|\eta|}$, we get
 $$
 \int \int_{|\xi_1|<1}\frac{|\xi|^{s+a+\frac32} |\xi_1|^{\frac12} \la \theta\ra^{1-2s+} d\xi_1d\sigma}{\la \theta_1\ra^{2s}    \la \xi^4|\xi_1|+\sigma^2 \ra^{\frac32-a-s-} }
 $$
 where $\theta_1=\frac{\theta\xi_1}{\xi}\pm \sigma \sqrt{\tfrac{| \xi_1(\xi-\xi_1)|}{|\xi|}}$. We rewrite this as
  $$
 \int \int_{|\xi_1|<1}\frac{|\xi|^{s+a+\frac32} |\xi_1|^{\frac12-s} \la \theta\ra^{1-2s+} d\xi_1d\sigma}{\big| \theta \frac{|\xi_1|^{\frac12}}{|\xi |^{\frac12}|\xi-\xi_1|^{\frac12}}\pm \sigma \big|^{2s}   \la \sigma  \ra^{2-4s+} \la \xi^4|\xi_1| \ra^{\frac12-a+s-} }  $$   
 $$\les \int_{|\xi_1|<1} \frac{|\xi|^{s+a+\frac32} |\xi|^{1-2s+} |\xi_1|^{0-}   d\xi_1 }{  |\xi|^{2-4a+4s-} |\xi_1|^{\frac12-a+s-} } \les |\xi|^{5a-5s+\frac12+}
 $$
 which is bounded provided that $a<s-\frac1{10}$.

In the nonresonance cases, i.e. $|\xi_1|,|\xi-\xi_1|\gg 1$, we consider two  cases: 

1. $\la \tau_1\ra$ and $\la \tau_2\ra \ll M$,

2. $\la \tau_1\ra$ or $\la \tau_2\ra \gtrsim \max(M,\la \tau\ra)$.

\noindent
\textsc{Case 1:} $\la \tau_1\ra$ and $\la \tau_2\ra \ll M$. 

In this case we have $| \tau|\approx M$. Integrating in $\lambda_1$, we estimate the integral in \eqref{eq:ntp} by
\be\label{inttemp5} 
\int  \frac{|\xi|^{s+a+\frac32} \la \xi,\theta\ra^{1+}  d\xi_1d\theta_1   }{\la\xi_1,\theta_1\ra^{2s }\la\xi-\xi_1,\theta-\theta_1\ra^{2s} \la M\ra^{\frac32-s-a } \big\la \lambda +\frac{\theta^2}\xi-\xi_1^5-(\xi-\xi_1)^5+\eta \big\ra^{1-} }.
\ee
We estimate this integral by considering two subcases.

Subcase 1.i: $ 1\ll |\xi_1|\ll  |\xi|$.\footnote{The case $ 1\ll |\xi-\xi_1|\ll  |\xi|$ is similar by symmetry.} 

In this case we have
$$
|\xi-\xi_1|\approx|\xi|,\quad M\approx \xi^4|\xi_1|+|\eta|,\quad |\eta|\approx (\tfrac{\theta-\theta_1}{\xi-\xi_1}-\tfrac{\theta_1}{\xi_1})^2 |\xi_1|.
$$
Therefore, it is reasonable to consider the following regions:\\
Region 1: $|\eta|\ll  \xi^4|\xi_1|$ and $|\theta_1|\ll|\xi_1|\xi^2$,\\
Region 2: $|\eta|\ll\xi^4|\xi_1|$, $|\theta_1|\gtrsim|\xi_1|\xi^2$, and $|\theta-\theta_1|\gtrsim|\xi|^3$,\\
Region 3: $|\eta|\gtrsim \xi^4|\xi_1|$. 

 In  Region 1, we first note that $|\theta-\theta_1|\ll|\xi|^3$, and hence $|\theta|\ll|\xi|^3$. Using the change of variable in \eqref{rhochange}, we bound \eqref{inttemp5} by
$$
\int_{|\theta_1|\les |\xi|^3}  \frac{|\xi|^{s+a+\frac32} \la \xi,\theta\ra^{1+} |\xi|^{4s+4a -6 }    d\rho  d\theta_1}{\la \theta_1\ra^{2s }\la\xi  ,\theta-\theta_1\ra^{2s} \big\la \lambda +\frac{\theta^2}\xi-\rho \big\ra^{1-}  |\xi|^{4-}\rho^{0+}} 
 \les |\xi|^{5s+5a-\frac{17}2+} \la \xi,\theta\ra^{1-\min(2s,4s-1)+}.  
$$
 This is acceptable by considering the cases $s>\frac12$ and $\frac14<s <\frac12$ separately and using $|\theta|\ll|\xi|^3$.

Region 2: In this region we have $|\theta_1|\approx|\frac{\xi_1\theta}{\xi}|$ which also implies $ |\theta|\approx |\theta-\theta_1|\gtrsim |\xi|^3$. Therefore, we 
  bound \eqref{inttemp5} by
$$
\int  \frac{|\xi|^{s+a+\frac32} \la  \theta\ra^{1-4s+} |\xi|^{4s+4a -6 } |\xi_1|^{s+a -\tfrac32  } |\xi|^{2s} d\xi_1d\theta_1   }{|\xi_1|^{2s }  \big\la \lambda +\frac{\theta^2}\xi-\xi_1^5-(\xi-\xi_1)^5+\eta \big\ra^{1-} }.
$$
Passing to $\eta$ in $\theta_1$ integral
$$
\int  \frac{|\xi|^{s+a+\frac32} \la  \theta\ra^{1-4s+} |\xi|^{4s+4a -6 } |\xi_1|^{s+a -\tfrac32  } |\xi|^{2s} d\xi_1d\eta  }{|\xi_1|^{2s-\frac12 } |\eta|^{\frac12} \big\la \lambda +\frac{\theta^2}\xi-\xi_1^5-(\xi-\xi_1)^5+\eta \big\ra^{1-} } 
$$
$$
\les \int |\xi|^{7s+5a+\frac32-6} \la  \theta\ra^{1-4s+} |\xi_1|^{-s+a -1}  d\xi_1 \les |\xi|^{7s+5a+\frac32-6+3-12s+} \les 1,
$$
provided that $a<s+\frac3{10}$.

Region 3 is $|\eta|\gtrsim \xi^4|\xi_1|$.  Without loss of generality $|\theta-\theta_1|\gtrsim |\theta|$.  Passing to the $\eta$ variable,  we bound the integral  by 
\be\label{R131}
\int  \frac{|\xi|^{s+a+\frac32} \la \xi,\theta\ra^{1-2s+}  |\xi_1|^{\frac12}d\xi_1d\eta   }{\la\xi_1,\theta_1\ra^{2s } |\eta|^{2-s-a  } \big\la \lambda +\frac{\theta^2}\xi-\xi_1^5-(\xi-\xi_1)^5+\eta \big\ra^{1-} },
\ee
 where $\theta_1=\frac{\theta\xi_1}{\xi}\pm\sqrt{|\tfrac{\eta\xi_1(\xi-\xi_1)}\xi|}$. When  $|\theta|< |\xi|$ or $s>\frac12$, after integrating in $\eta$ we bound this by 
 $$
 \int_{1\ll|\xi_1|\ll|\xi|}  \frac{|\xi|^{s+a+\frac32} |\xi|^{1-2s+}  |\xi_1|^{\frac12-2s}d\xi_1   }{  (\xi^4|\xi_1|)^{2-s-a - }  }\les |\xi|^{3s+5a-\frac{11}2+}\les 1
 $$
 provided that $a<s+\frac12$ and $a<\frac{11}{10}-\frac{3s}5$. The second restriction is acceptable since for $\frac12<s<\frac32-\frac13$,  we have $\frac{11}{10}-\frac{3s}5>\frac13$.
 
 When $\frac14<s\leq \frac12$ and $|\theta|>|\xi|$, using
 $$
 1<|\eta\xi_1|\approx |\tfrac{\xi_1}{\xi}\theta-\theta_1|^2
 $$
 we have
 $$
 \la\xi_1,\theta_1\ra^{2s } |\eta\xi_1|^s\gtrsim \la \xi_1,\tfrac{\theta\xi_1}\xi\ra^{2s}\gtrsim |\xi_1|^{2s}|\theta|^{2s}|\xi|^{-2s},
 $$
 Using this, we bound \eqref{R131} by
 $$
 \int  \frac{|\xi|^{3s+a+\frac32}|\theta|^{1-4s+}  |\xi_1|^{\frac12-s}d\xi_1d\eta   }{  |\eta|^{2-2s-a  } \big\la \lambda +\frac{\theta^2}\xi-\xi_1^5-(\xi-\xi_1)^5+\eta \big\ra^{1-} },
 $$
 $$
 \les  \int_{1\ll|\xi_1|\ll|\xi|}   \frac{|\xi|^{3s+a+\frac32}|\xi|^{1-4s+}  |\xi_1|^{\frac12-s}d\xi_1   }{  (\xi^4|\xi_1|)^{2-2s-a-  }  }\les  \frac{|\xi|^{3s+a+\frac32}|\xi|^{1-4s+}  |\xi |^{-\frac12+s+a+}   }{  (\xi^4 )^{2-2s-a-  }  }\les 1
 $$
 provided that $a<1-\frac{4s}3$.
 
Subcase 1.ii:  $ |\xi_1|\approx |\xi-\xi_1|\gtrsim  |\xi|$.  
 
 In this case we have
$$
M\approx \xi_1^4|\xi |+|\eta|,\quad |\eta|\approx  (\tfrac{\theta-\theta_1}{\xi-\xi_1}-\tfrac{\theta_1}{\xi_1})^2\tfrac{\xi_1^2}{|\xi|}.
$$
Also assuming that $|\theta-\theta_1|\gtrsim |\theta|$ without loss of generality and then passing to the $\eta$ variable, we bound the integral by
$$
\int \frac{|\xi|^{s+a+\frac32} \la \xi,\theta\ra^{1-2s+}  d\xi_1d\theta_1   }{\la\xi_1,\theta_1\ra^{2s } |\xi\xi_1^4|^{\frac32-s-a  } \big\la \lambda +\frac{\theta^2}\xi-\xi_1^5-(\xi-\xi_1)^5+\eta \big\ra^{1-} } 
$$
\be\label{R21}
\les \int  \frac{|\xi|^{s+a+1} \la \xi,\theta\ra^{1-2s+} |\xi_1| d\xi_1d\eta   }{\la\xi_1,\theta_1\ra^{2s } |\xi\xi_1^4|^{\frac32-s-a  }
|\eta|^{\frac12} \big\la \lambda +\frac{\theta^2}\xi-\xi_1^5-(\xi-\xi_1)^5+\eta \big\ra^{1-} }
\ee
 where $\theta_1=\frac{\theta\xi_1}{\xi}\pm\sqrt{|\tfrac{\eta\xi_1(\xi-\xi_1)}\xi|}$. When  $|\theta|< |\xi|$ or $s>\frac12$, after integrating in $\eta$ we bound this by 
 $$
\int  \frac{|\xi|^{2a+\frac12}   |\xi_1|^{2s+4a-5} d\xi_1    }{ 
 \big\la \lambda +\frac{\theta^2}\xi-\xi_1^5-(\xi-\xi_1)^5  \big\ra^{\frac12-} } \les |\xi|^{2s+6a+\frac12-5 -\frac32 +}  \les 1
$$
provided that
$a<1-\frac{s}3$. In the second to last inequality we used the $\rho$ trick as in the estimate of \eqref{rhotrick} above.

When $s< \frac12 $ and $|\theta|>|\xi|$, noting that 
$$
|\eta|\frac{\xi_1^2}{|\xi|}\approx |\tfrac{\xi_1}{\xi}\theta-\theta_1|^2
$$
we obtain
$$
 \la\xi_1,\theta_1\ra^{2s } \la \eta\ra^s |\xi_1^2\xi^{-1}|^s\gtrsim \la \xi_1,\tfrac{\theta\xi_1}\xi\ra^{2s}\gtrsim |\xi_1|^{2s}|\theta|^{2s}|\xi|^{-2s}.
 $$
Using this, we bound \eqref{R21} by
$$\int  \frac{|\xi|^{s+a+1} |\theta|^{1-4s+} \la \eta\ra^s | \xi_1^2\xi^{-1}|^s|\xi|^{2s}|\xi_1| d\xi_1d\eta   }{|\xi_1|^{2s}  |\xi\xi_1^4|^{\frac32-s-a  }
|\eta|^{\frac12 } \big\la \lambda +\frac{\theta^2}\xi-\xi_1^5-(\xi-\xi_1)^5+\eta \big\ra^{1-} }
$$
$$
\les \int_{|\xi_1|\gtrsim|\xi| }  |\xi|^{3s+2 a-\frac12} |\xi|^{1-4s+} |\xi_1|^{4s+4a-5} d\xi_1   \les |\xi|^{3s+6a-\frac72}\les 1,
$$
provided that $a<\frac7{12}-\frac{s}2$.

\noindent 
\textsc{Case 2:}  $\la \tau_1\ra \gtrsim \max(M,\la\tau \ra)$. The case $\la \tau_2\ra$ is larger is identical. 

In the case $s>\frac12$, we estimate the integral in \eqref{eq:ntp} by (assuming $\la\xi-\xi_1,\theta-\theta_1\ra\gtrsim \la\xi ,\theta \ra$ without loss of generality and integrating in $\lambda_1$ and then in $\theta_1$ and $\xi_1$)
$$
\int  \frac{ |\xi|^{s+a+\frac32}\la \xi,\theta\ra^{1-2s+} d\xi_1d\theta_1}{\la\xi_1,\theta_1\ra^{2s} |\xi^3\xi_1(\xi-\xi_1)|^{1-}}\les |\xi|^{s+a+\frac32-4+} \les 1
$$
provided that $a<\frac52-s$.

Similarly, when $ \frac14<s<\frac12$ and   $|\theta|\les |\xi|$, we estimate \eqref{eq:ntp} by
 $$
\int   \frac{ |\xi|^{s+a+\frac52+}  d\xi_1d\theta_1}{\la \theta_1\ra^{2s} \la \theta-\theta_1\ra^{2s} |\xi^3\xi_1(\xi-\xi_1)|^{1-}}\les 
|\xi|^{s+a+\frac52-4+} \les 1
$$
provided that $a+s<\frac32$. 

It remains to consider the case  $ \frac14<s<\frac12$ and  $|\theta|\gg|\xi|$.   Instead of \eqref{eq:ntp}, it suffices to prove
$$
\sup_{|\xi_1|>1,\theta_1,\lambda_1} \int_{ |\theta|\gg|\xi|} \frac{|\xi|^{s+a+\frac32} \la \theta\ra^{1+} \la\tau\ra^{s+a -\tfrac32  } d\xi d\theta d\lambda}{\la\xi_1,\theta_1\ra^{2s }\la\xi-\xi_1,\theta-\theta_1\ra^{2s} \la \tau_1\ra^{1-}  \la  \tau_2\ra^{1-} } <\infty.
$$
Evaluating the $\lambda $ integral and assuming  that $|\theta-\theta_1|\gtrsim |\theta|$, we consider
$$
\int  \frac{|\xi|^{s+a+\frac32} \la  \theta\ra^{1-2s+}   d\xi d\theta}{\la\xi_1,\theta_1\ra^{2s }  \la M\ra^{1-}  \la \lambda_1+(\xi-\xi_1)^5-\xi^5+\tfrac{\theta^2}\xi-\tfrac{(\theta-\theta_1)^2}{\xi-\xi_1}\ra^{\frac32-a-s-} }.
$$
 Passing to the $\eta$ variable, we have
 $$
\int  \frac{|\xi|^{s+a+2} |\xi-\xi_1|^{\frac12}\big|\frac{\theta_1\xi}{\xi_1}\pm \sqrt{|\frac{\eta\xi(\xi-\xi_1)}{\xi_1}|}\big|^{1-2s+}   d\xi d\eta}{|\xi_1|^{\frac12} |\eta|^{\frac12} \la\xi_1,\theta_1\ra^{2s }  \la\xi^3\xi_1(\xi-\xi_1) \ra^{1-}  \la \lambda_1+\tfrac{\theta_1^2}{\xi_1}+(\xi-\xi_1)^5-\xi^5+\eta \ra^{\frac32-a-s-} }
$$
$$\les \int \frac{|\xi|^{-s+a +}|\theta_1|^{1-2s+}   d\xi d\eta}{|\xi_1|^{\frac52-2s-}  |\xi-\xi_1|^{\frac12-} |\eta|^{\frac12} \la\xi_1,\theta_1\ra^{2s }     \la \lambda_1+\tfrac{\theta_1^2}{\xi_1}+(\xi-\xi_1)^5-\xi^5+\eta \ra^{\frac32-a-s-} }
$$
$$
+\int  \frac{|\xi|^{ a-\frac12+}    d\xi d\eta}{|\xi_1|^{2-s+} |\xi-\xi_1|^{ s-} |\eta|^{s-} \la\xi_1,\theta_1\ra^{2s }   \la \lambda_1+\tfrac{\theta_1^2}{\xi_1}+(\xi-\xi_1)^5-\xi^5+\eta \ra^{\frac32-a-s-} } 
$$
Since $a,s<\frac12$, we can integrate in $\eta$ to obtain
$$
\les  \int  \frac{|\xi|^{-s+a +}   d\xi  }{|\xi_1|^{\frac52-2s-}  |\xi-\xi_1|^{\frac12-}      \la \lambda_1+\tfrac{\theta_1^2}{\xi_1}+(\xi-\xi_1)^5-\xi^5  \ra^{1-a-s-} }
$$
$$
+\int  \frac{|\xi|^{ a-\frac12+}    d\xi }{|\xi_1|^{2+s+} |\xi-\xi_1|^{ s-}      \la \lambda_1+\tfrac{\theta_1^2}{\xi_1}+(\xi-\xi_1)^5-\xi^5  \ra^{\frac12-a -} }. 
$$
 We consider two regions  $1<|\xi_1|\ll|\xi|$ and  $|\xi_1|\gtrsim |\xi|$. In the second region we estimate the integrals by $|\xi_1|^{s+a-2+}+|\xi_1|^{a-2s-\frac32+}$,
 which suffices. In the first region by letting $ \rho=\xi^5-(\xi-\xi_1)^5$, and noting that
$$|\xi|\approx |\rho|^{\frac14}|\xi_1|^{-\frac14},\quad  d\rho\approx |\xi_1||\xi|^3 d\xi\approx |\rho|^{\frac34}|\xi_1|^{\frac14} d\xi,$$
 we have
 $$
 \les \int  \frac{|\rho|^{\frac{a}4-\frac{s}4-\frac78+}   d\rho }{|\xi_1|^{\frac52-2s+\frac{-s+a-\frac12}4+\frac14+}      \la \lambda_1+\tfrac{\theta_1^2}{\xi_1} -\rho \ra^{1-a-s-} } +\int  \frac{|\rho|^{\frac{a}4-\frac{s}4-\frac78+}   d\rho }{|\xi_1|^{ 2+s+\frac{-s+a-\frac12}4+\frac14+}    \la \lambda_1+\tfrac{\theta_1^2}{\xi_1} -\rho \ra^{\frac12-a-} }
 $$
 $$
 \les \int \frac{|\rho|^{\frac{a}4-\frac{s}4-\frac78+}   d\rho }{     \la \lambda_1+\tfrac{\theta_1^2}{\xi_1} -\rho \ra^{\frac12-a-} } \les 1
 $$
 provided that $a<\frac3{10}+\frac{s}5$.
 
 \section{Local well--posedness and smoothing} \label{sec:lwp}
The assertion of Theorem~\ref{thm:local} follows from the a priori linear estimates established in Section~\ref{sec:lin} and the nonlinear estimates in Section~\ref{sec:nonlin}. The proof follows along the lines of the proof of Theorem 1.3 in Section 5 of \cite{etza} (also see the proof of Theorem 2.4 in Section 4 of \cite{etnls}). In particular, the fix point argument for equation \eqref{eq:duhamel} in the $X^{s,b}$ space
for the linear solution follows from   Proposition~\ref{w2:xsbb1}, the discussion preceeding it, and the Kato smoothing bound Proposition~\ref{freekp2:y=0}.  For the nonlinear terms we use Theorem~\ref{thm:nonlin1}, Theorem~\ref{thm:nonlin2}, and the Proposition~\ref{prop:nonlinKato}. We also use the properties \eqref{eq:xs1}, \eqref{eq:xs2}, \eqref{eq:xs3} of $X^{s,b}$ spaces. 
Similarly, the   solution   belongs to 
$  C^0_tH^s_{x,y}([0,T]\times U) \cap C^0_y\cH^s_{x,t}(\R^+ \times \R\times [0,T]) $ using  Proposition~\ref{freekp2:y=0}, Proposition~\ref{prop:verttrace}, Proposition~\ref{prop:cHstoHs}, and  Proposition~\ref{prop:nonlinKato}. These a priori estimates also imply continuous dependence  on initial data, see  Section 5 of \cite{etza}. 
Note that the solution is unique once we fix an extension of the initial data, however it is not clear whether the restriction of the solution to the half plane is independent of the extension.

Finally, the smoothing bound in Theorem~\ref{thm:smooth} follows from the same estimates as in the proof of Theorem 1.1 in  Section 5 of \cite{etnls}. 
\end{proof}
\section{Appendix}

In this appendix we present two elementary lemmas that have been used in this paper repeatedly. For the proof of the first lemma see  the Appendix of \cite{erdtzi1}. The second lemma is the well--known Schur's test.
\begin{lemma}\label{lem:sums}   If  $\beta\geq \gamma\geq 0$ and $\beta+\gamma>1$, then
\be\nn
\int_\R \frac{1}{\la \tau-k_1\ra^\beta \la \tau-k_2\ra^\gamma} d\, \tau \lesssim \la k_1-k_2\ra^{-\gamma} \phi_\beta(k_1-k_2),
\ee
where
 \be\nn
\phi_\beta(k):=\sum_{|n|\leq |k|}\frac1{\la n\ra^\beta}\sim \left\{\begin{array}{ll}
1, & \beta>1,\\
\log(1+\la k\ra), &\beta=1,\\
\la k \ra^{1-\beta}, & \beta<1.
 \end{array}\right.
\ee
The statement remains valid when $\la \tau-k_2\ra$ is replaced with $| \tau-k_2|$ provided that $\gamma<1$.
\end{lemma}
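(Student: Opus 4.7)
The plan is to reduce to the case $k_2=0$ and set $k:=k_1-k_2$, using translation invariance of Lebesgue measure in $\tau$. When $|k|\les 1$ the two weights are comparable, $\la\tau-k\ra\approx\la\tau\ra$, so $\int\la\tau\ra^{-\beta-\gamma}\,d\tau$ converges by the hypothesis $\beta+\gamma>1$, and the right-hand side is $\approx 1$; this disposes of the small $k$ case. For $|k|>1$ I decompose $\R=R_1\cup R_2\cup R_3$ with $R_1=\{|\tau|\leq|k|/4\}$, $R_2=\{|\tau-k|\leq|k|/4\}$, and $R_3$ the complement, on which both $\la\tau\ra$ and $\la\tau-k\ra$ are $\gtrsim\la k\ra$.

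The region $R_2$ is the one that produces the claimed bound directly: there $\la\tau\ra\approx\la k\ra$, so after pulling out $\la k\ra^{-\gamma}$ and shifting $\sigma=\tau-k$ I am left with $\int_{|\sigma|\les|k|}\la\sigma\ra^{-\beta}\,d\sigma$, which is precisely $\phi_\beta(k)$ up to constants. On $R_1$ a symmetric computation yields $\la k\ra^{-\beta}\phi_\gamma(k)$, and on $R_3$ a further split into $|\tau|\leq 2|k|$ and $|\tau|>2|k|$ (where both weights are $\approx\la k\ra$ and $\approx\la\tau\ra$, respectively) gives a bound of order $\la k\ra^{1-\beta-\gamma}$, using $\beta+\gamma>1$ to integrate the tail.

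For the variant with $|\tau-k_2|^{-\gamma}$ in place of $\la\tau-k_2\ra^{-\gamma}$, the only change is in the behaviour near $\tau=k_2$, where one needs $\int_{|\sigma|\les 1}|\sigma|^{-\gamma}\,d\sigma<\infty$; this is exactly the restriction $\gamma<1$. Away from $\tau=k_2$ one has $|\tau-k_2|\gtrsim 1$, so $\la\tau-k_2\ra\approx|\tau-k_2|$ and the three-region argument transfers verbatim.

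The main bookkeeping hurdle will be verifying that both the $R_1$ contribution $\la k\ra^{-\beta}\phi_\gamma(k)$ and the $R_3$ contribution $\la k\ra^{1-\beta-\gamma}$ are dominated by the target $\la k\ra^{-\gamma}\phi_\beta(k)$. This is an elementary but asymmetric case analysis according to whether $\beta$ and $\gamma$ are strictly below, equal to, or strictly above $1$, using crucially $\beta\geq\gamma$ together with the explicit asymptotics $\phi_\beta(k)\approx 1$, $\log\la k\ra$, $\la k\ra^{1-\beta}$ in the three regimes. In particular the inequalities $\phi_\beta(k)\gtrsim\la k\ra^{\max(0,1-\beta)}$ and $\phi_\beta(k)\geq 1$ for $|k|\geq 1$ absorb the $R_3$ contribution, while observations such as $\la k\ra^{-\beta}\log\la k\ra\les\la k\ra^{-\gamma}$ when $\beta>\gamma=1$ handle the borderline $R_1$ cases.
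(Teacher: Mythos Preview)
Your argument is correct; this is the standard three-region decomposition and the case analysis in the final paragraph goes through as you indicate. The paper itself does not prove this lemma but simply refers the reader to the Appendix of \cite{erdtzi1}, where essentially the same argument appears.
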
 

\begin{lemma}\label{lem:schur} Let $T$ be an integral operator  with kernel $K(\theta,\eta)$, $\theta\in\R^m$, $\eta\in\R^n$. Assume that for some positive functions $p(\theta)$, $q(\eta)$, and some constants $A, B$ we have 
$$
\int |K(\theta,\eta)| p(\theta) d\theta \leq A q(\eta), \text{ for a.e. } \eta,
$$ 
$$
\int |K(\theta,\eta)| q(\eta) d\eta \leq B p(\theta), \text{ for a.e. } \theta,
$$ 
then $\|T\|_{L^2\to L^2}\leq \sqrt {AB}$.
\end{lemma}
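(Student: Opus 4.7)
\textbf{Proof proposal for Lemma~\ref{lem:schur} (Schur's test).}

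This is the classical Schur test; the plan is to carry out the standard two-step argument: a pointwise Cauchy--Schwarz split of the kernel, followed by Fubini and the two hypothesized integral bounds. Concretely, for $f\in L^2(\R^n)$ I would start from the pointwise expression $(Tf)(\theta)=\int K(\theta,\eta) f(\eta)\, d\eta$ and factor the integrand as
$$
|K(\theta,\eta) f(\eta)|=\Bigl(|K(\theta,\eta)|\, q(\eta)\Bigr)^{1/2}\cdot \Bigl(\tfrac{|K(\theta,\eta)|}{q(\eta)}\Bigr)^{1/2} |f(\eta)|,
$$
which is the natural ``weighted'' factorization that lets each hypothesis act on one factor.

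Applying the Cauchy--Schwarz inequality in $\eta$ then gives the pointwise bound
$$
|Tf(\theta)|^2 \leq \Bigl(\int |K(\theta,\eta)|\, q(\eta)\, d\eta\Bigr)\cdot \Bigl(\int \tfrac{|K(\theta,\eta)|}{q(\eta)} |f(\eta)|^2\, d\eta\Bigr) \leq B\, p(\theta)\int \tfrac{|K(\theta,\eta)|}{q(\eta)} |f(\eta)|^2\, d\eta,
$$
where the second inequality is exactly the second hypothesis. Next I would integrate this inequality in $\theta$ and swap the order of integration by Fubini/Tonelli (legitimate since everything is non-negative), obtaining
$$
\|Tf\|_{L^2}^2 \leq B\int \tfrac{|f(\eta)|^2}{q(\eta)} \Bigl(\int |K(\theta,\eta)|\, p(\theta)\, d\theta\Bigr) d\eta.
$$
At this point the first hypothesis supplies exactly the bound $\int |K(\theta,\eta)|\, p(\theta)\, d\theta \leq A\, q(\eta)$, which collapses the inner integral and cancels the $1/q(\eta)$ weight, yielding $\|Tf\|_{L^2}^2 \leq AB\, \|f\|_{L^2}^2$.

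There is no real obstacle here: the lemma is a routine two-line manipulation once one sees the right factorization of $|K|$. The only minor care required is to note that the inequalities in the hypotheses hold for a.e.\ $\eta$ and a.e.\ $\theta$ respectively, so one should observe that the resulting bounds hold almost everywhere and that the final integration is unaffected by null sets; Fubini is applicable throughout because the integrand is non-negative. A density argument from, say, bounded compactly supported $f$ is not needed since the estimate is derived directly for arbitrary $f\in L^2$.
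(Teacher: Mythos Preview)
Your argument is correct and is exactly the standard proof of Schur's test. The paper does not actually supply a proof of this lemma; it is stated in the Appendix as ``the well--known Schur's test'' and left unproved, so there is nothing to compare against beyond noting that your write-up is the classical one.
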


 \end{document}